\theoremstyle{definition} 
\newtheorem{thm}{Theorem}[section] 
\newtheorem{lemma}[thm]{Lemma}
\newtheorem{prop}[thm]{Proposition}
\newtheorem{cor}[thm]{Corollary}
\newtheorem{ex}[thm]{Example}
\newtheorem{defn}[thm]{Definition}
\newtheorem{rmk}[thm]{Remark}
\newtheorem{note}[thm]{Notation}
\newtheorem{discMM}[thm]{Discussion}
\newcommand{\ds}{\displaystyle}
\newcommand{\colim@}[2]{%
  \vtop{\m@th\ialign{##\cr
    \hfil$#1\operator@font colim$\hfil\cr
    \noalign{\nointerlineskip\kern1.5\ex@}#2\cr
    \noalign{\nointerlineskip\kern-\ex@}\cr}}%
}
\newcommand{\colim}{%
  \mathop{\mathpalette\colim@{\rightarrowfill@\textstyle}}\nmlimits@
}
\newcommand{\Coeq}{%
  \mathop{\operatorfont Coeq}}
\newcommand{\oldthm}[3]{\vspace{\topsep} 
\noindent \textbf{#1 \ref{#2}.} #3

\vspace{\topsep}}
\begin{document}
\title{Covers in the Canonical Grothendieck Topology}
\author{C. Lester}
\date{\today}
\maketitle

\begin{abstract}
We explore the canonical Grothendieck topology in some specific circumstances.
First we use a description of the canonical topology to get a variant of Giraud's Theorem. 
Then we explore the canonical Grothendieck topology on the categories of sets and topological spaces; here we get a nice basis for the topology.
Lastly, we look at the canonical Grothendieck topology on the category of $R$-modules.
\end{abstract}

\tableofcontents

\section{Introduction}\label{section intro}

In SGA 4.2.2 Verdier defined the canonical Grothendieck topology as the largest Grothendieck topology where all representable presheaves are sheaves.
This paper grew out of an attempt to obtain a precise description of the covers in this Grothendieck topology in the cases of some familiar categories;
we investigate the question for sets, abelian groups, $R$-modules, topological spaces and compactly generated Hausdorff spaces.
The category of sets is simple enough that we can give a complete answer, and in the two categories of topological spaces we give a fairly precise description.
The question for abelain groups and $R$-modules seems to be very subtle, though, and we have only been able to obtain partial results.
Along the way we prove that the canonical topology has a natural appearance in Giraud's Theorem, which is the source for some of our interest in it.
\bigskip

Sieves will be of particular importance in this paper and so we start with a reminder of its definition; we follow the notation and terminology used by Mac Lane and Moerdijk in \cite{maclane}.
For any object $X$ of a category $\EuScript{C}$, we call $S$ a \textit{sieve on $X$} if $S$ is a collection of morphisms, all of whose codomains are $X$, that is closed under precomposition, i.e. if $f\in S$  and $f\circ g$ makes sense, then $f\circ g\in S$.
In particular, we can view a sieve $S$ on $X$ as a full subcategory of the overcategory $(\EuScript{C}\downarrow X)$.

By work from \cite{mineCanTop}, the canonical Grothendieck topology can be characterized in terms of colimits.
Specifically, the canonical Grothendieck topology can be described as the collection of all universal colim sieves where:

\begin{defn}\label{def of cs and ucs}
For a category $\EuScript{C}$, an object $X$ of $\EuScript{C}$ and sieve $S$ on $X$, we call $S$ a {\it colim sieve} if $\colim_{S}{U}$ exists and the canonical map $\colim_{S}{U}\to X$ is an isomorphism.
(Alternatively, $S$ is a colim sieve if $X$ is the universal cocone under the diagram $U\colon S\to \EuScript{C}$.) 
Moreover, we call $S$ a {\it universal colim sieve} if for all arrows $\alpha\colon Y\to X$ in $\EuScript{C}$, $\alpha^\ast S$ is a colim sieve on $Y$.
\end{defn}


One use of this presentation is the following variant of Giraud's Theorem:

\oldthm{Proposition}{Giraud corollary}{If $\EuScript{E}$ is a `nice' category,
then $\EuScript{E}$ is equivalent to the category of sheaves on $\EuScript{E}$ under the canonical topology.}

The universal-colim-sieve presentation also affords us an explicit description of the canonical Grothendieck topology's covers on the category of topological spaces:

\oldthm{Proposition}{Top ucs characterization}{In the category of all topological spaces, $\{A_\alpha\to X\}_{\alpha\in\EuScript{A}}$ is part of a basis for the canonical topology if and only if $\alpha\colon\coprod_{\alpha\in\EuScript{A}} A_\alpha \to X$ is a universal quotient map (i.e.\ $\alpha$ and every pullback of $\alpha$ is a quotient map).
Additionally, a sieve $S$ on $X$ is a (universal) colim sieve if and only if there exists some collection $\{A_\alpha\to X\}_{\alpha\in\EuScript{A}} \subset S$ such that $\coprod_{\alpha\in\EuScript{A}} A_\alpha \to X$ is a (universal) quotient map.
In particular, $T = \langle\{f\colon Y\to X\}\rangle$ is a (universal) colim sieve if and only if $f$ is a (universal) quotient map.}

\vspace{-\topsep}
\oldthm{Proposition}{CGWH ucs characterization}{In the category of compactly generated weakly Hausdorff spaces, $\{A_\alpha\to X\}_{\alpha\in\EuScript{A}}$ is part of the basis for the canonical topology if and only if $\coprod_{\alpha\in\EuScript{A}} A_\alpha \to X$ is a quotient map.
In particular, a sieve $S=\langle \{A_\alpha\to X\}_{\alpha\in \EuScript{A}}\rangle$ on $X$ is in the canonical topology if and only if $\coprod_{\alpha\in\EuScript{A}}A_\alpha\to X$ is a quotient map.
Moreover, every colim sieve is universal.}

\noindent Furthermore, this presentation allows us to more easily compute examples and non-examples in the category of topological spaces; for instance,

\oldthm{Example \ref{direct limit top example}/Example}{direct limit top example for CGWH}{Take $\mathbb{R}^n\to\mathbb{R}^{n+1}$ be the closed inclusion map $(x_1,\dots,x_n)\mapsto(x_1,\dots,x_n,0)$ and use $\mathbb{R}^\infty$ to denote the direct limit $\colim_{n\in\mathbb{N}} \mathbb{R}^n$ with maps $\iota_n\colon \mathbb{R}^n\to\mathbb{R}^\infty$.
Then the cover generated by $\{\iota_n\}_{n\in\mathbb{N}}$ is not in the canonical topology for the category of all topological spaces but is in the canonical topology for the category of compactly generated weakly Hausdorff spaces.}

Additionally, we can use the universal-colim-sieve presentation to get a better idea of the canonical Grothendieck topology's covers on the category of $R$-modules. For example,

\oldthm{Proposition}{good example pf}{Let $S$ be the cover generated by $ \{ f_1\colon M_1\to R, f_2\colon M_2\to R\}$ such that $im(f_i) = a_i R$ for $i = 1,2$.
Then $S$ is in the canonical topology on \textbf{$R$-Mod} if and only if $(a_1,a_2) = R$.}

\vspace{-\topsep}
\oldthm{Proposition}{hope?}{Let $R$ be an infinite principal ideal domain.
Let $S$ be the cover generated by $\{g_i\colon R^n \hookrightarrow R^n\}_{i=1}^M \cup \{f_i\colon R^{m_i}\hookrightarrow R^n\ |\ m_i<n \}_{i=1}^N$. If $S$ a cover in the canonical topology on \textbf{$R$-Mod}, then $g_1\oplus\dots\oplus g_M\colon R^{nM}\to R^n$ is a surjection.}

\vspace{-\topsep}
\oldthm{Proposition}{sieves on Z}{Let $S$ be the cover generated by $\{ \mathbb{Z}\xrightarrow{\times a_i} \mathbb{Z} \}_{i=1}^N$. Then $S$ is in the canonical topology on $\mathbb{Z}\textbf{-Mod}$ if and only if $\text{gcd}(a_1,\dots,a_N) = 1$.}

\vspace{-\topsep}
\oldthm{Proposition}{diag matrices}{Let $S$ be the cover generated by $\{ \mathbb{Z}^n \xrightarrow{A_i} \mathbb{Z}^n \}_{i=1}^N$ where $A_i$ is a diagonal matrix with $\det(A_i)\neq 0$. Then there exists a map $\beta\colon \mathbb{Z}\to \mathbb{Z}^n$ such that $\beta^\ast S$ is not a colim sieve in $\mathbb{Z}\textbf{-Mod}$ if and only if $\text{gcd}(\det(A_1),\dots,\det(A_N))$ does not equal $ 1$.}


\bigskip

\noindent\textit{Organization.}

To start this paper we recall some results from \cite{mineCanTop} in Section \ref{section background}.
Then in Section \ref{section Giraud} we review Giraud's theorem and prove our Corollary to Giraud's Theorem, i.e.\ we prove that that every category $\EuScript{C}$, which satisfies some hypotheses, is equivalent to the category of sheaves on $\EuScript{C}$ with the canonical topology.
In Section \ref{UCS ex of sets and top} we briefly discuss the canonical topology on the category of sets before exploring the canonical topology on the category of topological spaces. Specifically, we look at the category of all topological spaces and the category of compactly generated weakly Hausdorff spaces. We are able to refine our description and obtain a basis for the canonical topology; this result reduces the question ``Is this in the canonical topology?'' to the question ``Is a specific map a universal quotient map?''
Since universal quotient maps have been studied in-depth (for example by Day and Kelly in \cite{DayKelly}), this reduction becomes our most computationally agreeable description of the canonical topology and hence we use it to find some specific examples and non-examples.
Lastly, in Section \ref{UCS in R mod} we investigate the canonical topology on the category of $R$-modules and the category of abelian groups, where we work towards refining our description by making some reductions and obtaining some exclusionary results.
While these reductions and results lead us to some specific examples and non-examples, a basis for the canonical topology remains elusive.
\bigskip

\noindent\textit{General Notation.}

\begin{note}\label{forgetful functor}
For any subcategory $S$ of $(\EuScript{C}\downarrow X)$, we will use $U$ to represent the forgetful functor $S\to\EuScript{C}$. For example, for a sieve $S$ on $X$, $U(f)=\text{domain}\ f$.
\end{note}


\begin{note}\label{generators}
We say that a sieve $S$ on $X$ is \textit{generated} by the morphisms $ \{f_\alpha\colon A_\alpha\to X\}_{\alpha\in\mathcal{A}}$ and write $S = \langle \{f_\alpha\colon A_\alpha\to X\}_{\alpha\in\mathcal{A}}\rangle$ if each $f\in S$ factors through one of the $f_\alpha$, i.e. if $f\in S$ then there exists an $\alpha\in\mathcal{A}$ and morphism $g$ such that $f = f_\alpha\circ g$.
\end{note}
\bigskip

\noindent\textit{Acknowledgements.}

This work is part of the author's doctoral dissertation at the University of Oregon.
The author is extremely grateful to their advisor, Dan Dugger, for all of his guidance, wisdom and patience.

\section{Background}\label{section background} 

This section contains a review of the results from \cite{mineCanTop} that will be used in this paper.

\begin{lemma}\label{pb sieve gen set}
Suppose $\EuScript{C}$ is a category with all pullbacks. \\ Let $S = \langle \{g_\alpha\colon A_\alpha\to X \}_{\alpha\in\mathfrak{A}} \rangle$ be a sieve on object $X$ of $\EuScript{C}$ and $f\colon Y\to X$ be a morphism in $\EuScript{C}$. Then $f^\ast S = \langle \{A_\alpha\times_X Y \overset{\pi_2}{\longrightarrow} Y\}_{\alpha\in\mathfrak{A}} \rangle$.
\end{lemma}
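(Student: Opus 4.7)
The plan is to prove the two containments $\langle \{\pi_2\colon A_\alpha\times_X Y\to Y\}_{\alpha\in\mathfrak{A}}\rangle \subseteq f^\ast S$ and $f^\ast S \subseteq \langle \{\pi_2\colon A_\alpha\times_X Y\to Y\}_{\alpha\in\mathfrak{A}}\rangle$ separately, using nothing more than the definition of $f^\ast S$ (i.e.\ $h\in f^\ast S$ iff $f\circ h\in S$), the fact that $f^\ast S$ and sieves generated by a family are closed under precomposition, and the universal property of the pullback. Since $\EuScript{C}$ is assumed to have all pullbacks, the generating family on the right-hand side makes sense.

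For the containment $\langle \{\pi_2\}\rangle \subseteq f^\ast S$, I would first show that each $\pi_2\colon A_\alpha\times_X Y\to Y$ lies in $f^\ast S$. By the defining square of the pullback, $f\circ\pi_2 = g_\alpha\circ\pi_1$, and this composite factors through $g_\alpha$, hence lies in $S$. Thus $\pi_2\in f^\ast S$. Because $f^\ast S$ is a sieve, it contains every precomposition of $\pi_2$ with a map into $A_\alpha\times_X Y$, and this is exactly the generated sieve on the left.

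For the reverse containment, I would take an arbitrary $h\colon Z\to Y$ in $f^\ast S$, so that $f\circ h\in S$. Since $S$ is generated by $\{g_\alpha\}_{\alpha\in\mathfrak{A}}$, there is some index $\alpha$ and a morphism $k\colon Z\to A_\alpha$ with $f\circ h = g_\alpha\circ k$. The universal property of the pullback $A_\alpha\times_X Y$ then produces a unique map $\ell\colon Z\to A_\alpha\times_X Y$ with $\pi_1\circ\ell = k$ and $\pi_2\circ\ell = h$. In particular, $h$ factors through $\pi_2$, so $h$ lies in the sieve generated by $\{\pi_2\colon A_\alpha\times_X Y\to Y\}_{\alpha\in\mathfrak{A}}$.

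There is no real obstacle here; the argument is a routine application of the pullback's universal property, and the only point requiring any care is to be explicit that the generating set on the right really is inside $f^\ast S$ (so that the sieve it generates is as well), rather than arguing only about the generators themselves.
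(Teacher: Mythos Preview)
Your argument is correct and is exactly the standard proof: both containments follow directly from the definition of $f^\ast S$, closure of sieves under precomposition, and the universal property of the pullback. Note that the paper does not actually prove this lemma; it is stated in the background section as a result imported from \cite{mineCanTop}, so there is no in-paper proof to compare against, but your proof is the expected one.
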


\begin{prop}\label{colim is coeq}
Let $\EuScript{C}$ be a cocomplete category. 
For a sieve in $\EuScript{C}$ on $X$ of the form $S = \langle \{f_\alpha\colon A_\alpha\to X \}_{\alpha\in\mathfrak{A}} \rangle$ such that $A_i\times_X A_j$ exists for all $i,j\in\mathfrak{A}$,
$$\colim_{S}{U} \cong
\Coeq \left(
\begin{tikzcd}\displaystyle
\coprod_{(i,j)\in\mathfrak{A}\times\mathfrak{A}} A_i\times_X A_j \arrow[d, shift right=2] \arrow[d, shift left=2] \\
\displaystyle \coprod_{k\in\mathfrak{A}} A_k
\end{tikzcd}\right)
$$
where the left and right vertical maps are induced from the projection morphisms $\pi_1\colon A_i\times_X A_j \to A_i$ and $\pi_2\colon A_i\times_X A_j \to A_j$.
\end{prop}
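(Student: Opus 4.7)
The plan is to check that the proposed coequalizer and $\colim_S U$ corepresent the same functor on $\EuScript{C}$, namely $W \mapsto \{\text{cocones of } U\colon S\to\EuScript{C} \text{ with vertex } W\}$. The natural bijection will come from restricting a cocone on $S$ to its values at the generators $f_\alpha$ and, in the other direction, extending a compatible family defined on the generators to the whole sieve via the factorization property.

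First I would unpack the definitions. A cocone on $U$ with vertex $W$ is a family of maps $\psi_f\colon U(f)\to W$ indexed by $f\in S$ such that whenever $h\colon U(f)\to U(f')$ satisfies $f'\circ h = f$ (i.e.\ $h$ is a morphism in $S$ viewed as a full subcategory of $(\EuScript{C}\downarrow X)$), one has $\psi_{f'}\circ h = \psi_f$. Restricting to the generators yields maps $\psi_{f_\alpha}\colon A_\alpha\to W$, which assemble into $\coprod_\alpha A_\alpha \to W$. Applying the cocone condition to the two projections $\pi_1,\pi_2\colon A_i\times_X A_j\to A_i,A_j$ (each a morphism in $S$ because $f_i\circ\pi_1 = f_j\circ\pi_2\in S$ by the pullback property and sieve closure) forces $\psi_{f_i}\circ\pi_1 = \psi_{f_j}\circ\pi_2$. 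This is precisely the coequalizer relation, so the restriction map factors uniquely through the coequalizer.

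Next I would build the inverse. Given compatible maps $\varphi_\alpha\colon A_\alpha\to W$ satisfying the coequalizer relation, I define, for any $f\in S$, $\psi_f := \varphi_\alpha\circ g$ where $f = f_\alpha\circ g$ (such a factorization exists by Notation \ref{generators}). The key well-definedness check is that this is independent of the choice of factorization: if $f=f_\alpha\circ g_1 = f_\beta\circ g_2$, the universal property of the pullback $A_\alpha\times_X A_\beta$ produces $h\colon U(f)\to A_\alpha\times_X A_\beta$ with $\pi_1\circ h=g_1$ and $\pi_2\circ h=g_2$, and then $\varphi_\alpha\circ g_1 = \varphi_\alpha\circ\pi_1\circ h = \varphi_\beta\circ \pi_2\circ h = \varphi_\beta\circ g_2$ by the coequalizer relation. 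Once well-defined, $\{\psi_f\}$ is evidently a cocone on $U$, and by construction this procedure is two-sided inverse to restriction.

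The main obstacle, and the only subtle point, is the well-definedness step above: one has to verify that the equalities forced at pullbacks of pairs of generators already imply equality along all possible factorizations of a single $f\in S$ through the generators. Everything else is bookkeeping that the two assignments between cocones on $U$ and compatible pairs on the generators are mutually inverse and natural in $W$, at which point Yoneda gives $\colim_S U \cong \Coeq(\coprod_{i,j}A_i\times_X A_j \rightrightarrows \coprod_k A_k)$.
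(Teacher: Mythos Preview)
Your argument is correct. The paper itself does not prove this proposition: it appears in Section~\ref{section background}, which is explicitly a review of results quoted from \cite{mineCanTop} without proof, so there is no in-paper argument to compare against.

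Your approach is the standard one and is carried out correctly. The only place that deserves care is exactly where you flag it: well-definedness of the extension $\psi_f \coloneqq \varphi_\alpha\circ g$. Your use of the universal property of $A_\alpha\times_X A_\beta$ to reconcile two factorizations $f=f_\alpha g_1=f_\beta g_2$ is exactly right, and this is precisely why the hypothesis that the pullbacks $A_i\times_X A_j$ exist is needed. One small point worth making explicit (you say it is ``evidently'' a cocone): given a morphism $h$ in $S$ from $f$ to $f'$ and a chosen factorization $f'=f_\beta g'$, one has the factorization $f=f_\beta(g'h)$, whence $\psi_f=\varphi_\beta g' h=\psi_{f'}h$; so naturality of the extended cocone follows immediately from well-definedness. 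With that spelled out, the two assignments are visibly inverse and natural in $W$, and the Yoneda-style conclusion is immediate.
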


\begin{lemma}\label{cs nice with isom}
Let $\EuScript{C}$ be a category. Then $S$ is a colim sieve on $X$ if and only if $f^\ast S$ is a colim sieve for any isomorphism $f\colon Y\to X$.
\end{lemma}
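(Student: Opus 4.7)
The plan is to handle the two directions asymmetrically: $(\Leftarrow)$ is essentially a tautology, while $(\Rightarrow)$ reduces to showing that an isomorphism $f\colon Y\to X$ induces an isomorphism of indexing categories $S \cong f^\ast S$ over $\EuScript{C}$, which transfers the colimit data back and forth.

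For the $(\Leftarrow)$ direction, I would simply apply the hypothesis with $f = \mathrm{id}_X$. Since the pullback sieve $(\mathrm{id}_X)^\ast S$ consists of those $h\colon A\to X$ with $\mathrm{id}_X\circ h\in S$, we have $(\mathrm{id}_X)^\ast S = S$ verbatim, so the hypothesis specializes to exactly the desired conclusion.

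For $(\Rightarrow)$, assume $S$ is a colim sieve on $X$ and let $f\colon Y\to X$ be any isomorphism. Define $\Phi\colon S\to f^\ast S$ on objects by $(h\colon A\to X)\mapsto(f^{-1}\circ h\colon A\to Y)$, with inverse $\Psi\colon f^\ast S\to S$ given by $(g\colon A\to Y)\mapsto(f\circ g\colon A\to X)$. Because these sieves are full subcategories of the relevant overcategories and $\Phi,\Psi$ act as the identity on underlying morphisms of $\EuScript{C}$, they are mutually inverse isomorphisms of categories. Crucially, $U\circ\Phi = U$ and $U\circ\Psi = U$ as functors to $\EuScript{C}$, since both sides send a sieve element to its domain. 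Hence the two diagrams $U\colon S\to\EuScript{C}$ and $U\colon f^\ast S\to\EuScript{C}$ literally coincide up to relabeling, and one colimit exists if and only if the other does.

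Finally, I would verify that the \emph{canonical} cocones match up correctly: if $\{\lambda_h\colon U(h)\to X\}_{h\in S}$ is the canonical cocone exhibiting $\colim_S U \xrightarrow{\cong} X$, then $\{f^{-1}\circ\lambda_h\colon U(h)\to Y\}_{h\in S}$ is a cocone with tip $Y$, and under the identification via $\Phi$ this is exactly the canonical cocone $\{g\colon U(g)\to Y\}_{g\in f^\ast S}$ of $f^\ast S$. The canonical map $\colim_{f^\ast S} U \to Y$ is therefore the composite of the isomorphism $\colim_S U \to X$ with $f^{-1}$, hence an isomorphism. The only obstacle is the bookkeeping of checking that $\Phi$ commutes with $U$ and that the cocone transfer yields precisely the canonical cocone on the other side; there is no substantive difficulty.
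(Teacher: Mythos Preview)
Your proof is correct. The $(\Leftarrow)$ direction via $f=\mathrm{id}_X$ is the right triviality, and for $(\Rightarrow)$ your functor $\Phi\colon S\to f^\ast S$, $h\mapsto f^{-1}\circ h$, does land in $f^\ast S$ (since $f\circ(f^{-1}\circ h)=h\in S$), is an isomorphism of categories with inverse $\Psi$, and satisfies $U\circ\Phi=U$. The final step---that postcomposing the canonical cocone $\{h\colon U(h)\to X\}$ with the isomorphism $f^{-1}$ yields a universal cocone with tip $Y$ that is exactly the canonical cocone of $f^\ast S$ under the relabeling $\Phi$---is the right observation and is what makes the \emph{canonical} map $\colim_{f^\ast S}U\to Y$ an isomorphism rather than merely some map.

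As for comparison: the paper does not actually prove this lemma. It appears in Section~\ref{section background} as a background result imported from \cite{mineCanTop}, with no argument given here. So there is nothing in this paper to compare against; your argument stands on its own as a complete and standard proof.
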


Recall that a morphism $f\colon Y\to X$ is called an \textit{effective epimorphism} provided $Y\times_X Y$ exists, $f$ is an epimorphism and $c\colon \Coeq\left(Y\times_X Y\ \substack{\longrightarrow\\ \longrightarrow}\ Y\right)\to X$ is an isomorphism.
Note that this third condition actually implies the second because $f=c\circ g$ where $g\colon Y\to \Coeq\left(Y\times_X Y\ \substack{\longrightarrow\\ \longrightarrow}\ Y\right)$ is the canonical map. Indeed, $g$ is an epimorphism by an easy exercise and $c$ is an epimorphism since it is an isomorphism.

Additionally, $f\colon Y\to X$ is called a \textit{universal effective epimorphism} if $f$ is an effective epimorphism with the additional property that for every pullback diagram
\begin{center}
\begin{tikzcd}
W \arrow{r} \arrow{d}[left]{\pi_g} &
Y \arrow{d}[right]{f} \\
Z \arrow{r}[below]{g} &
X
\end{tikzcd}
\end{center}
$\pi_g$ is also an effective epimorphism.


\begin{cor}\label{eff epi's gen colim sieves}
Let $\EuScript{C}$ be a cocomplete category with pullbacks.
If $$S = \langle \{f\colon Y\to X\}\rangle$$ is a sieve on $X$, then $S$ is a colim sieve if and only if $f$ is an effective epimorphism. Moreover, $S$ is a universal colim sieve if and only if $f$ is a universal effective epimorphism.
\end{cor}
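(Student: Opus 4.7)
The plan is to unpack the definitions and apply the earlier results essentially by substitution, since the sieve $S = \langle\{f\colon Y\to X\}\rangle$ is generated by a single morphism.

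First I would handle the non-universal statement. Apply Proposition \ref{colim is coeq} with index set $\mathfrak{A} = \{f\}$ a singleton; this is legal because $Y\times_X Y$ exists by the pullback hypothesis on $\EuScript{C}$. The proposition then yields
\[
\colim_S U \;\cong\; \Coeq\bigl(Y\times_X Y \rightrightarrows Y\bigr),
\]
and the canonical map $\colim_S U \to X$ corresponds to the canonical map $c\colon \Coeq(Y\times_X Y \rightrightarrows Y) \to X$ from the definition of effective epimorphism. By definition $S$ is a colim sieve iff this canonical map is an isomorphism, which by the observation recalled in the text (that the coequalizer-is-iso condition already forces $f$ to be an epimorphism) is equivalent to $f$ being an effective epimorphism.

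For the universal statement, I would combine this with Lemma \ref{pb sieve gen set}. Given any $g\colon Z\to X$, Lemma \ref{pb sieve gen set} gives
\[
g^\ast S \;=\; \bigl\langle \{Y\times_X Z \xrightarrow{\pi_g} Z\}\bigr\rangle,
\]
a sieve generated by a single morphism, namely the pullback $\pi_g$ of $f$ along $g$. Applying the first part of the corollary to $g^\ast S$, it is a colim sieve on $Z$ iff $\pi_g$ is an effective epimorphism. Quantifying over all $g$, we conclude that $S$ is a universal colim sieve iff every pullback of $f$ is an effective epimorphism, which is precisely the definition of a universal effective epimorphism.

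There is no real obstacle here; the content of the corollary is that the machinery already set up (Proposition \ref{colim is coeq} for the coequalizer presentation of $\colim_S U$, Lemma \ref{pb sieve gen set} for the behavior of pullback on singly-generated sieves, and the textbook fact that the third condition in the definition of effective epi implies the epi condition) lines up exactly with the classical definitions of effective and universal effective epimorphism. The only minor point to be careful about is verifying that the map $\colim_S U \to X$ produced by Proposition \ref{colim is coeq} agrees, under the isomorphism with the coequalizer, with the map $c$ appearing in the effective-epi definition — but this is immediate since both are induced by $f\colon Y\to X$ itself.
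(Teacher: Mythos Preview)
Your proof is correct and is exactly the natural argument. Note, however, that the paper does not actually supply a proof of this corollary: it appears in Section~\ref{section background} as one of several background results quoted from \cite{mineCanTop}, so there is no in-paper proof to compare against. That said, your argument is precisely the one the setup invites---specialize Proposition~\ref{colim is coeq} to a singleton generating set to identify $\colim_S U$ with the coequalizer of the kernel pair, then use Lemma~\ref{pb sieve gen set} to reduce the universal case to the non-universal one applied to each pullback---and there is no substantive alternative route here.
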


\begin{thm}\label{ucs is a top}
Let $\EuScript{C}$ be any category. The collection of all universal colim sieves on $\EuScript{C}$ forms a Grothendieck topology.
\end{thm}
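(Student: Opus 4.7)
The plan is to verify the three Grothendieck topology axioms — maximal sieve, stability under pullback, and transitivity — for the collection of universal colim sieves. The first two are essentially formal. For the maximal sieve $t_X$ on $X$, the identity morphism is a terminal object of the underlying category of $t_X$, so the colimit of $U\colon t_X \to \EuScript{C}$ is $X$ via the identity; since $\alpha^\ast(t_X) = t_Y$ for any $\alpha\colon Y \to X$, the same argument makes $t_X$ a universal colim sieve. For stability, if $S$ is a universal colim sieve on $X$ and $h\colon Y \to X$, then for any $\alpha\colon Z \to Y$ we have $\alpha^\ast(h^\ast S) = (h\alpha)^\ast S$, which is a colim sieve by universality of $S$.

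The transitivity axiom is the main content. Assume $S$ is a universal colim sieve on $X$ and $R$ is a sieve on $X$ with $f^\ast R$ a universal colim sieve on $Y$ for every $f\colon Y \to X$ in $S$. To show $R$ is a colim sieve, I would start with any cocone $\{\tau_r\}_{r \in R}$ on $U\colon R \to \EuScript{C}$ with apex $T$. For each $f\colon Y \to X$ in $S$, the restricted family $\{\tau_{fh}\}_{h \in f^\ast R}$ is a cocone on $U\colon f^\ast R \to \EuScript{C}$, so the hypothesis yields a unique $\phi_f\colon Y \to T$ extending it. A brief uniqueness argument shows that $\{\phi_f\}_{f \in S}$ is itself a cocone on $U\colon S \to \EuScript{C}$, and since $S$ is a colim sieve we obtain a unique $\Phi\colon X \to T$ with $\Phi \circ f = \phi_f$.

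The main obstacle I expect is showing that $\Phi \circ r = \tau_r$ for every $r\colon A \to X$ in $R$, not just for those $r$ that factor through some morphism of $S$. The trick is to pull $S$ back along $r$: since $r^\ast S$ is a colim sieve on $A$ by universality of $S$, and every $g\colon B \to A$ in $r^\ast S$ has composite $rg \in S$, one checks that both $\Phi \circ r \circ g$ and $\tau_r \circ g$ agree with $\phi_{rg}$ — the latter by observing that $\tau_{rg}$ itself extends the cocone on $(rg)^\ast R$ and invoking the uniqueness clause. The universal property of $r^\ast S$ then forces $\Phi \circ r = \tau_r$, and uniqueness of $\Phi$ is immediate because any such extension must restrict to $\phi_f$ on each $f \in S$. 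Finally, universality of $R$ is obtained by repeating the entire argument on $Z$ with the sieves $\alpha^\ast S$ and $\alpha^\ast R$, using the identity $(\alpha f')^\ast R = (f')^\ast(\alpha^\ast R)$ to verify the hypotheses.
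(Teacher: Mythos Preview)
The paper does not actually prove this theorem: it appears in Section~\ref{section background} (``Background''), which is explicitly a review of results quoted from \cite{mineCanTop}, and no proof is given there. So there is nothing in this paper to compare your argument against.

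That said, your proposal is a correct and standard verification of the three axioms. The maximal-sieve and stability axioms go exactly as you say. Your transitivity argument is also sound, and in fact the step you flag as the ``main obstacle'' is slightly easier than you make it: once $r\in R$ is fixed and $g\in r^\ast S$, the composite $rg$ lies in $R$, so $(rg)^\ast R$ is the maximal sieve on its domain and hence $\phi_{rg}=\tau_{rg}$ immediately (no need to invoke an extra uniqueness clause). From there $\Phi\circ r\circ g=\phi_{rg}=\tau_{rg}=\tau_r\circ g$ for all $g\in r^\ast S$, and the colimit property of $r^\ast S$ forces $\Phi\circ r=\tau_r$ as you argue. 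Your reduction for universality of $R$ via $\alpha^\ast S$ and $\alpha^\ast R$ is also correct.
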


\begin{thm}\label{can top is ucs}
For any (locally small) category $\EuScript{C}$, the collection of all universal colim sieves on $\EuScript{C}$ is the canonical topology.
\end{thm}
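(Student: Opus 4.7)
The plan is to prove two containments: every universal colim sieve lies in the canonical topology, and conversely every sieve in the canonical topology is a universal colim sieve. Theorem \ref{ucs is a top} already provides that the universal colim sieves form a Grothendieck topology, so the first containment reduces to checking that each representable $\Hom(-, Z)$ is a sheaf for every universal colim sieve $S$ on an object $X$. For this step I only need that $S$ is a colim sieve: a matching family for $\Hom(-, Z)$ along $S$ is precisely a cocone from the diagram $U\colon S\to\EuScript{C}$ to $Z$, and the hypothesis $\colim_S U\cong X$ via the canonical map then produces a unique morphism $X\to Z$ amalgamating the family.

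For the reverse containment, suppose $S$ is a sieve on $X$ in the canonical topology. Because the canonical topology is itself a Grothendieck topology, $f^\ast S$ also lies in the canonical topology for every morphism $f\colon Y\to X$. Hence it suffices to show that any sieve in the canonical topology is a colim sieve. The sheaf condition on $\Hom(-, Z)$ gives, for each $Z$, a natural bijection between $\Hom(X, Z)$ and the set of matching families on $S$, implemented by $\phi\mapsto (\phi\circ f)_{f\in S}$. Re-expressing matching families as cocones produces a natural isomorphism between the functor $\Hom(X, -)$ and the cocone functor $Z\mapsto \text{Cocones}(U, Z)$. Tracing the identity $1_X$ through this isomorphism identifies the canonical cocone $(f\colon \text{dom}(f)\to X)_{f\in S}$ as the universal one, so $X$ is the colimit of $U$ with the canonical map serving as the comparison; this is exactly the definition of a colim sieve.

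The main technical point is the identification of matching families with cocones: one needs to verify that the matching-family compatibility conditions translate exactly into the cocone equations for the indexing category $S\subseteq(\EuScript{C}\downarrow X)$, and that this correspondence is natural in the target $Z$. Once that identification is pinned down, direction (i) is an immediate application of the universal property of a colimit, and direction (ii) is a Yoneda-style argument that converts the representation of the cocone functor by $X$ into the conclusion that $X$ is a colimit of $U$ via the canonical comparison map.
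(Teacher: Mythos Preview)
Your proof is correct. Note, however, that the paper does not actually prove Theorem~\ref{can top is ucs}: it appears in Section~\ref{section background}, which explicitly collects results quoted from \cite{mineCanTop} without proof. So there is no in-paper argument to compare against.

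That said, your argument is the standard one and almost certainly matches what is in \cite{mineCanTop}. The heart of the matter is exactly the identification you spell out: for a sieve $S$ on $X$, a matching family for $\Hom(-,Z)$ on $S$ is the same datum as a cocone under $U\colon S\to\EuScript{C}$ with vertex $Z$, naturally in $Z$. From this, ``$\Hom(-,Z)$ satisfies the sheaf condition for $S$ for every $Z$'' is equivalent to ``$X$ with its canonical cocone is $\colim_S U$,'' i.e.\ $S$ is a colim sieve. Your two containments then follow immediately: colim sieves make representables into sheaves, so the universal-colim-sieve topology (which is a topology by Theorem~\ref{ucs is a top}) is subcanonical; and any covering sieve of the canonical topology---together with all its pullbacks, by the base-change axiom---must be a colim sieve, hence a universal colim sieve. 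One small point worth making explicit in your write-up: your Yoneda-style step shows not merely that some object represents the cocone functor, but that $X$ does so via the canonical cocone, which is precisely what is needed to conclude that $\colim_S U$ exists and the canonical comparison map is an isomorphism.
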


\begin{prop}\label{reducing sieve generating set}
Let $\EuScript{C}$ be a cocomplete category with pullbacks.
Futher assume that coproducts and pullbacks commute in $\EuScript{C}$.
Then a sieve of the form $S = \langle\{f_\alpha\colon A_\alpha\to X\}_{\alpha\in\EuScript{A}}\rangle$ is a (universal) colim sieve if and only if the sieve $T = \langle\{\coprod f_\alpha\colon \coprod_{\alpha\in\EuScript{A}} A_\alpha\to X\}\rangle$ is a (universal) colim sieve.
\end{prop}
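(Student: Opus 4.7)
The plan is to compute both $\colim_S U$ and $\colim_T U$ as coequalizers using Proposition \ref{colim is coeq} and then identify these coequalizers via the hypothesis that coproducts commute with pullbacks. For the non-universal direction, Proposition \ref{colim is coeq} presents the $S$-colimit as
\[
\colim_S U \cong \Coeq\Bigl(\coprod_{(i,j)\in \EuScript{A}\times\EuScript{A}} A_i\times_X A_j \rightrightarrows \coprod_{k\in\EuScript{A}} A_k\Bigr),
\]
and, after observing that $T$ is generated by the single morphism $\coprod f_\alpha\colon B\to X$ with $B := \coprod_\alpha A_\alpha$, the same proposition gives
\[
\colim_T U \cong \Coeq\bigl(B\times_X B \rightrightarrows B\bigr).
\]
Using the hypothesis on coproducts and pullbacks, I would identify $B\times_X B \cong \coprod_{(i,j)} A_i\times_X A_j$ and check that under this identification the two projections $\pi_1,\pi_2\colon B\times_X B \to B$ restrict on each summand $A_i\times_X A_j$ to precisely the pair of maps appearing in the $S$-coequalizer. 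Since the canonical maps to $X$ on both sides are both induced by $\coprod f_\alpha$, this yields an isomorphism $\colim_S U \cong \colim_T U$ over $X$, so $S$ is a colim sieve if and only if $T$ is.

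For the universal direction, let $g\colon Y\to X$ be arbitrary. Lemma \ref{pb sieve gen set} gives
\[
g^\ast S = \langle\{A_\alpha\times_X Y \to Y\}_\alpha\rangle \quad\text{and}\quad g^\ast T = \langle\{B\times_X Y \to Y\}\rangle.
\]
The pullback/coproduct hypothesis gives $B\times_X Y \cong \coprod_\alpha(A_\alpha\times_X Y)$, which says that the generator of $g^\ast T$ is precisely the coproduct of the generators of $g^\ast S$. Applying the already-proven non-universal case to the pair $(g^\ast S, g^\ast T)$ shows that they are colim sieves together, so $S$ is a universal colim sieve if and only if $T$ is.

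The main obstacle I anticipate is bookkeeping: verifying carefully that the pullback/coproduct isomorphism $B\times_X B \cong \coprod_{(i,j)} A_i\times_X A_j$ turns the two $T$-projections into the two sums of $S$-projections summand by summand, and that the resulting identification of coequalizers commutes with the canonical map to $X$. Once this naturality is pinned down, the argument reduces to two clean applications of Proposition \ref{colim is coeq}.
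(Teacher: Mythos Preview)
Your proposal is correct. The paper itself does not give a direct proof of this proposition---it is listed in Section~\ref{section background} as a background result imported from \cite{mineCanTop}---but the very argument you describe is reproduced essentially verbatim inside the proof of Proposition~\ref{Giraud corollary}: there the author uses Proposition~\ref{colim is coeq} on both $S$ and $T_S$, invokes the coproduct/pullback commutation to identify the two coequalizer diagrams (this is equation~(\ref{fact})), and for universality applies Lemma~\ref{pb sieve gen set} together with $(\coprod A_\alpha)\times_X Y\cong\coprod(A_\alpha\times_X Y)$ exactly as you do. So your approach is the intended one.
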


\begin{thm}\label{hinting at basis for can top in special case}
Let $\EuScript{C}$ be a cocomplete category with pullbacks whose coproducts and pullbacks commute.
A sieve $S$ on $X$ is a (universal) colim sieve of $\EuScript{C}$ if and only if there exists some $\{A_\alpha\to X\}_{\alpha\in\EuScript{A}} \subset S$ where $\ds\coprod_{\alpha\in\EuScript{A}} A_\alpha \to X$ is a (universal) effective epimorphism.
\end{thm}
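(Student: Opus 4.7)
The plan is to derive the theorem from Proposition \ref{reducing sieve generating set} combined with Corollary \ref{eff epi's gen colim sieves}, exploiting the fact that any sieve $S$ is generated by its own underlying set of morphisms.

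For the forward direction, I would take $\{A_\alpha\to X\}_{\alpha\in\EuScript{A}}$ to be all of $S$ itself. Since $S = \langle \{f : f \in S\} \rangle$, Proposition \ref{reducing sieve generating set} applied with this generating set shows that $S$ is a (universal) colim sieve if and only if $\langle\{\coprod_{f\in S} f\}\rangle$ is, and by Corollary \ref{eff epi's gen colim sieves} the latter holds if and only if $\coprod_{f \in S} \mathrm{dom}(f) \to X$ is a (universal) effective epimorphism. Thus this choice of $\{A_\alpha\to X\}$ supplies the required family.

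For the backward direction, suppose $\{A_\alpha \to X\}_{\alpha \in \EuScript{A}} \subset S$ with $\coprod_\alpha A_\alpha \to X$ a (universal) effective epimorphism. Let $S' := \langle \{A_\alpha\to X\} \rangle \subseteq S$; by Proposition \ref{reducing sieve generating set} and Corollary \ref{eff epi's gen colim sieves}, $S'$ is a (universal) colim sieve. I would then verify the universal property of $\colim_S U$ directly: any cocone $(W, \{\phi_f\}_{f \in S})$ restricts to an $S'$-cocone and induces a unique $\bar u\colon X \to W$ with $\bar u \circ g = \phi_g$ for $g \in S'$. For $f\colon A \to X$ in $S$ and each $\alpha$, cocone compatibility applied on the pullback square $A \times_X A_\alpha$ (noting $f \circ p_1 = f_\alpha \circ p_2$ with both sides in $S$) forces $\phi_f \circ p_1 = \phi_{f_\alpha} \circ p_2 = \bar u \circ f_\alpha \circ p_2 = \bar u \circ f \circ p_1$, so assembling over $\alpha$ using that coproducts commute with pullbacks, $\phi_f$ and $\bar u \circ f$ agree after precomposition with the pullback projection $A \times_X \coprod_\alpha A_\alpha \to A$.

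The main obstacle is upgrading this agreement to the equality $\phi_f = \bar u \circ f$. In the universal case, the pullback projection is itself an effective epimorphism by hypothesis, hence an epimorphism, and the argument closes immediately. For the non-universal colim sieve case, where the pullback projection need not be an epimorphism, I would reroute by applying Proposition \ref{reducing sieve generating set} to the generating set $\{f : f \in S\}$, reducing "$S$ is a colim sieve" to "$\coprod_{f \in S} \mathrm{dom}(f) \to X$ is an effective epimorphism", and verify this map's coequalizer property directly: any coequalizing $u\colon \coprod_{f\in S} A_f \to W$ restricts along the subcoproduct inclusion $\coprod_\alpha A_\alpha \hookrightarrow \coprod_{f\in S} A_f$ to a map coequalizing the kernel pair of $\coprod_\alpha A_\alpha \to X$ (the kernel pairs match up via the commutativity of coproducts and pullbacks), producing a unique $\bar u\colon X \to W$, and the pullback compatibility worked out above extends the identity $\bar u\circ \coprod_\alpha f_\alpha = u|_{\coprod A_\alpha}$ to $\bar u \circ \coprod_{f\in S} f = u$ on the remaining components.
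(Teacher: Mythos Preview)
The paper does not actually prove this theorem; it is quoted in Section~\ref{section background} as a background result from \cite{mineCanTop}. So there is no in-paper proof to compare against, and I can only assess your argument on its own terms.

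Your forward direction is fine, and your backward direction in the \emph{universal} case is also fine: once you know the projection $A\times_X\bigl(\coprod_\alpha A_\alpha\bigr)\to A$ is an epimorphism (which universality guarantees), agreement after precomposition upgrades to equality, and everything closes.

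The gap is in the non-universal backward direction. Your ``reroute'' does not actually avoid the obstacle you correctly identified a few lines earlier. After producing $\bar u\colon X\to W$ from the effective-epimorphism hypothesis on $\coprod_\alpha A_\alpha\to X$, you still need, for each $f\in S$, the equality $\bar u\circ f = u|_{A_f}$. Your justification is ``the pullback compatibility worked out above extends the identity\dots to the remaining components'' --- but what was worked out above was only that $\bar u\circ f$ and $u|_{A_f}$ agree after precomposition with
\[
\pi_1\colon A_f\times_X\Bigl(\coprod_\alpha A_\alpha\Bigr)\longrightarrow A_f.
\]
This map is the pullback of $\coprod_\alpha A_\alpha\to X$ along $f$, and when that map is merely an effective epimorphism (not a universal one) there is no reason for $\pi_1$ to be an epimorphism. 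So the passage from ``agree after $\pi_1$'' to ``equal'' is unjustified in exactly the case you are trying to handle. Rewriting the problem in terms of the big coproduct $\coprod_{f\in S}A_f$ and its kernel pair does not change this: the relation coming from the $(f,\alpha)$-summand of the kernel pair is precisely the precomposition relation above, and the relations coming from $(f,f')$-summands with $f'\notin S'$ give you nothing new because $u|_{A_{f'}}$ is equally unknown.

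In short: your argument proves the universal statement cleanly, but for the plain colim-sieve statement you have asserted the key step rather than proved it. Closing this gap requires either an additional argument explaining why an effective epimorphism with a coproduct complement remains effective (which is what happens in $\textbf{Top}$, where it follows from the explicit description of quotient maps), or an appeal to some monotonicity property of colim sieves that you have not established from the stated hypotheses.
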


\begin{thm}\label{basis for can top in special case}
Let $\EuScript{C}$ be a cocomplete category with stable and disjoint coproducts and all pullbacks. 
For each $X$ in $\EuScript{C}$, define $K(X)$ by
$$\{A_\alpha\to X\}_{\alpha\in\EuScript{A}}\in K(X)\iff \coprod_{\alpha\in\EuScript{A}} A_\alpha \to X \text{ is a universal effective epimorphism.}$$
Then $K$ is a Grothendieck basis and generates the canonical topology on $\EuScript{C}$.
\end{thm}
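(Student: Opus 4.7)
The plan is to split the proof into two parts: first verify that $K$ satisfies the three Grothendieck basis axioms, and then check that $K$ generates the canonical topology. The second part will be essentially free: by Theorem \ref{can top is ucs} the canonical topology consists of the universal colim sieves, and Theorem \ref{hinting at basis for can top in special case} says a sieve $S$ on $X$ is a universal colim sieve if and only if it contains some family $\{A_\alpha\to X\}_{\alpha\in\EuScript{A}}$ with $\coprod A_\alpha\to X$ a universal effective epimorphism, i.e.\ some element of $K(X)$. Thus once $K$ is known to be a basis, it automatically generates the canonical topology.

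For the basis axioms, the isomorphism axiom is immediate, because an isomorphism is trivially a universal effective epimorphism. The stability axiom follows quickly: given $\{f_\alpha\colon A_\alpha\to X\}\in K(X)$ and $g\colon Y\to X$, stability of coproducts yields $\coprod(A_\alpha\times_X Y) \cong (\coprod A_\alpha)\times_X Y$, and the pullback of the universal effective epimorphism $\coprod A_\alpha\to X$ along $g$ is again a universal effective epimorphism directly from the definition.

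The transitivity axiom is where the real work lies and is the main obstacle. Given $\{f_\alpha\colon A_\alpha\to X\}\in K(X)$ together with $\{g_{\alpha,\beta}\colon B_{\alpha,\beta}\to A_\alpha\}\in K(A_\alpha)$ for each $\alpha$, the candidate total map $\coprod_{\alpha,\beta} B_{\alpha,\beta}\to X$ factors as $\coprod_\alpha\bigl(\coprod_\beta B_{\alpha,\beta}\bigr)\to \coprod_\alpha A_\alpha \to X$, in which the second arrow is a universal effective epimorphism by hypothesis. So it suffices to prove two sub-claims: first, that a coproduct of universal effective epimorphisms is again a universal effective epimorphism; and second, that a composition of universal effective epimorphisms is a universal effective epimorphism. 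The second is a routine verification using that pullback preserves the relevant coequalizer diagrams. The first is the heart of the argument and is precisely where disjointness of coproducts is needed.

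To carry out the coproduct sub-claim, write $q_\alpha\colon B_\alpha\to A_\alpha$ for $\coprod_\beta g_{\alpha,\beta}\colon \coprod_\beta B_{\alpha,\beta}\to A_\alpha$ and set $q = \coprod_\alpha q_\alpha\colon \coprod_\alpha B_\alpha \to \coprod_\alpha A_\alpha$. Disjointness of coproducts gives $A_\alpha\times_{\coprod_{\alpha'} A_{\alpha'}} A_{\alpha''}=\emptyset$ for $\alpha\neq\alpha''$, so combined with stability of coproducts it lets the pullback $(\coprod_\alpha B_\alpha)\times_{\coprod_\alpha A_\alpha}(\coprod_\alpha B_\alpha)$ decompose as $\coprod_\alpha(B_\alpha\times_{A_\alpha} B_\alpha)$. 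Then commuting $\Coeq$ with coproducts identifies $\Coeq\!\bigl((\coprod B_\alpha)\times_{\coprod A_\alpha}(\coprod B_\alpha)\rightrightarrows \coprod B_\alpha\bigr)$ with $\coprod_\alpha \Coeq(B_\alpha\times_{A_\alpha}B_\alpha\rightrightarrows B_\alpha)=\coprod_\alpha A_\alpha$, proving $q$ is an effective epimorphism. The same decomposition persists after pulling back $q$ along an arbitrary $Z\to\coprod A_\alpha$ (using stability and disjointness once more together with the fact that each $q_\alpha$ is \emph{universal} effective), giving the universality of $q$ and completing the transitivity check.
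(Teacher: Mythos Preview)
The paper does not prove this theorem here: it sits in Section~\ref{section background}, which explicitly reviews results from \cite{mineCanTop} without proof, so there is no in-paper argument to compare against.

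Your outline is essentially correct. The reduction of ``$K$ generates the canonical topology'' to Theorem~\ref{hinting at basis for can top in special case} is valid (stable coproducts give commutation of coproducts with pullbacks via Remark~\ref{stable implies coproducts and pullbacks commute}, so the hypotheses of that theorem are met), and the isomorphism and stability basis axioms go through as you describe. For transitivity, your two sub-claims are the right decomposition, and the coproduct sub-claim is handled well; one point you use tacitly is strictness of the initial object (so the off-diagonal pullbacks $B_\alpha\times_{\coprod A}B_{\alpha'}$ really are initial), but this follows from stability applied to the empty coproduct.

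The one place to sharpen is the composition sub-claim. Calling it ``routine'' via ``pullback preserves the relevant coequalizer diagrams'' is misleading, since pullback does \emph{not} in general preserve coequalizers. The actual crux is that for universal effective epimorphisms $g\colon Z\to Y$ and $f\colon Y\to X$, the induced $g\times_X g\colon Z\times_X Z\to Y\times_X Y$ is an epimorphism, because it factors as a composite of two pullbacks of $g$ and $g$ is \emph{universal}. With that in hand, the coequalizer map $Z\to\Coeq(Z\times_X Z\rightrightarrows Z)$ factors first through $Y$ (since $g$ is effective) and then through $X$ (since $f$ is effective and $g\times_X g$ is epi), yielding the isomorphism. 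Alternatively you can bypass the direct argument entirely: by Corollary~\ref{eff epi's gen colim sieves} a map is a universal effective epimorphism iff the sieve it generates is a universal colim sieve, and by Theorem~\ref{ucs is a top} those sieves form a Grothendieck topology; closure of universal effective epimorphisms under composition then drops out of the transitivity axiom of that topology.
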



\section{Giraud's Theorem and the Canonical Topology}\label{section Giraud}

Giraud's Theorem shows that categories with certain nice properties can be written as sheaves on a Grothendieck site.
We show that in fact, modulo universe considerations, one may take this site to be the original category with the canonical topology.


We will specifically use the version of Giraud's Theorem stated in \cite{maclane}.
In fact, the appendix of \cite{maclane} has a thorough discussion of Giraud's theorem and all of the terminology used in it; we will include the basics of this discussion for completeness. 
We will begin by recalling the definitions used in Mac Lane and Moerdijk's version of Giraud's Theorem.

Throughout this section, let $\EuScript{E}$ be a category with small hom-sets and all finite limits.
\bigskip

\noindent\textsc{Disjoint and Stable Coproducts}
\bigskip

Let $E_\alpha$ be a family of objects in $\EuScript{E}$ and $E = \amalg_\alpha E_\alpha$.

\begin{defn}
The coproduct $E$ is called \textit{disjoint} if every coproduct inclusion $i_\alpha\colon E_\alpha\to E$ is a monomorphism and, whenever $\alpha\neq\beta$, $E_\alpha\times_E E_\beta$ is the initial object in $\EuScript{E}$.
\end{defn}

\begin{defn}
The coproduct $E$ is called \textit{stable} (under pullback) if for every 
$f\colon D\to E$ in $\EuScript{E}$, the morphisms $j_\alpha$ obtained from the pullback diagrams
\begin{center}
\begin{tikzcd}
D\times_E E_\alpha \arrow{r} \arrow{d}[left]{j_\alpha} &
E_\alpha \arrow{d}[right]{i_\alpha} \\
D \arrow{r}[below]{f} &
E
\end{tikzcd}
\end{center}
%
induce an isomorphism $\coprod_\alpha(D\times_E E_\alpha)\cong D$.
\end{defn}

\begin{rmk}\label{stable implies coproducts and pullbacks commute}
If every coproduct in $\EuScript{E}$ is stable, then the pullback operation $-\times_E D$ ``commutes'' with coproducts, i.e.\ 
$(\coprod_\alpha B_\alpha)\times_E D \cong \coprod_\alpha (B_\alpha\times_E D)$.
\end{rmk}
\bigskip

\noindent\textsc{Coequalizer Morphisms and Kernel Pairs}
\bigskip

\begin{defn}
We call a morphism $f\colon Y\to Z$ in $\EuScript{E}$ a \textit{coequalizer} if there exists some object $X$ and morphisms $\partial_0,\partial_1\colon X\to Y$ such that
$$X\, \substack{\overset{\partial_0}{\longrightarrow} \\ \underset{\partial_1}{\longrightarrow}}\, Y \overset{f}{\longrightarrow} Z$$
%
is a coequalizer diagram.
\end{defn}

We remark that every coequalizing morphism is an epimorphism but the converse of this statement is not guaranteed.

\begin{defn}
The pair of morphisms $\partial_0,\partial_1\colon X\to Y$ are called a \textit{kernel pair} for $f\colon Y\to Z$ if the following is a pullback diagram
\begin{center}
\begin{tikzcd}
X \arrow{r}[above]{\partial_1} \arrow{d}[left]{\partial_0} &
Y \arrow{d}[right]{f} \\
Y \arrow{r}[below]{f} &
Z
\end{tikzcd}
\end{center}
\end{defn}
\bigskip

\noindent\textsc{Equivalence Relations and Quotients}
\bigskip

\begin{defn}
An \textit{equivalence relation} on the object $E$ of $\EuScript{E}$ is a subobject $R$ of $E\times E$, represented by the monomorphism $(\partial_0,\partial_1)\colon R\to E\times E$, satisfying the following axioms
\begin{enumerate}
\item (reflexive) the diagonal $\Delta\colon E\to E\times E$ factors through $(\partial_0,\partial_1)$,
\item (symmetric) the map $(\partial_1,\partial_0)\colon R\to E\times E$ factors through $(\partial_0,\partial_1)$,
\item (transitivity) if $R\times_E R$ is the pullback
\begin{center}
\begin{tikzcd}
R\times_E R \arrow{r}[above]{\pi_1} \arrow{d}[left]{\pi_0} &
R \arrow{d}[right]{\partial_0} \\
R \arrow{r}[below]{\partial_1} &
E
\end{tikzcd}
\end{center}
%
then $(\partial_1\pi_1,\partial_0\pi_0)\colon R\times_E R \to E\times E$ factors through $R$.
\end{enumerate}
\end{defn}

\begin{defn}
If $E$ is an object of $\EuScript{E}$ with equivalence relation $R$, then the \textit{quotient} is denoted $E/R$ and is defined to be
$$\Coeq\left( R\, \substack{\overset{\partial_0}{\longrightarrow} \\ \underset{\partial_1}{\longrightarrow}}\, E \right)$$ provided that this coequalizer exists.
\end{defn}
\bigskip

\noindent\textsc{Stably Exact Forks}
\bigskip

A diagram is called a \textit{fork} if it is of the form
\begin{equation}\label{fork}
X\, \substack{\overset{\partial_0}{\longrightarrow} \\ \underset{\partial_1}{\longrightarrow}}\, Y \overset{q}{\longrightarrow} Z.
\end{equation}

\begin{defn}
The fork (\ref{fork}) is called \textit{exact} if $\partial_0$ and $\partial_1$ are the kernel pair for $q$, and $q$ is the coequalizer of $\partial_0$ and $\partial_1$.
\end{defn}

\begin{defn}
The fork (\ref{fork}) is called \textit{stably exact} if the pullback of (\ref{fork}) along any morphism in $\EuScript{E}$ yields an exact fork, i.e. if for any $Z' \to Z$ in $\EuScript{E}$,
$$X\times_Z Z'\, \substack{\longrightarrow \\ \longrightarrow}\, Y\times_Z Z' \overset{q\times 1}{\longrightarrow} Z\times_Z Z'$$
%
is an exact fork.
\end{defn}
\bigskip

\pagebreak

\noindent\textsc{Generating Sets}
\bigskip

\begin{defn}
A set of objects $\{A_i\,|\,i\in I\}$ of $\EuScript{E}$ is said to \textit{generate} $\EuScript{E}$ if for every object $E$ of $\EuScript{E}$, $W = \{A_i\to E\,|\, i\in I\}$ is an epimorphic family (in the sense that for any two parallel arrows $u,v\colon E\to E'$, if every $w\in W$ yields the identity $uw=vw$, then $u=v$).
\end{defn}

\bigskip

\noindent\textsc{Giraud's Theorem}

\begin{thm}[Giraud, \cite{maclane}]\label{Giraud}
A category $\EuScript{E}$ with small hom-sets and all finite limits is a Grothendieck topos if and only if it has the following properties (which we will refer to as {\it Giraud's axioms}):
\begin{enumerate}
\item[(i)] $\EuScript{E}$ has small coproducts which are disjoint and stable under pullback,
\item[(ii)] every epimorphism in $\EuScript{E}$ is a coequalizer,
\item[(iii)] every equivalence relation $R\ \substack{\to \\ \to}\ E$ in $\EuScript{E}$ is a kernel pair and has a quotient,
\item[(iv)] every exact fork $R\ \substack{\to\\\to}\ E \to Q$ is stably exact,
\item[(v)] there is a small set of objects of $\EuScript{E}$ which generate $\EuScript{E}$.
\end{enumerate}
\end{thm}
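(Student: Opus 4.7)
The plan is to prove Giraud's Theorem in the standard two directions; the forward direction is a series of routine verifications and the backward direction contains the real content.

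For the forward direction, suppose $\EuScript{E}\simeq\mathrm{Sh}(\EuScript{C},J)$ for some small site $(\EuScript{C},J)$. Finite limits in $\EuScript{E}$ are computed pointwise from presheaves and colimits are obtained by sheafifying the corresponding presheaf colimits. Using these formulas I would check axioms (i)--(iv) by first observing that each condition holds trivially pointwise in $\mathrm{Set}$ and then arguing that it survives sheafification. For axiom (v), the sheafifications of the representables $y(C)$ for $C\in\EuScript{C}$ form a small generating set.

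For the backward direction, assume $\EuScript{E}$ satisfies (i)--(v). The goal is to build a small site $(\EuScript{C},J)$ together with an equivalence $\EuScript{E}\simeq\mathrm{Sh}(\EuScript{C},J)$. First, take $\EuScript{C}\subseteq\EuScript{E}$ to be the full subcategory on the small generating set $\{A_i\}_{i\in I}$. Equip $\EuScript{C}$ with the topology $J$ whose covers of $A\in\EuScript{C}$ are the families $\{B_\alpha\to A\}$ in $\EuScript{C}$ for which $\coprod_\alpha B_\alpha\to A$ is an effective epimorphism in $\EuScript{E}$. Axiom (i) guarantees that the relevant coproducts exist and are well behaved, while axioms (iii) and (iv) are exactly what is required to verify the Grothendieck topology axioms (base change and local character). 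Next, define the comparison functor $r\colon\EuScript{E}\to\mathrm{Sh}(\EuScript{C},J)$ by $r(E)=\Hom_{\EuScript{E}}(-,E)\big|_{\EuScript{C}}$; the sheaf condition for $r(E)$ drops out of axiom (iv). Full faithfulness of $r$ uses (v) to separate parallel morphisms and uses (ii)--(iii) to promote compatible cones into $E$ on objects of $\EuScript{C}$ to honest morphisms in $\EuScript{E}$. For essential surjectivity, I would construct a candidate inverse $L$ that sends a sheaf $F$ to the colimit in $\EuScript{E}$ of the tautological diagram over the category of elements of $F$; by Proposition~\ref{colim is coeq} this colimit is a coequalizer of a coproduct, so axiom (i) ensures it exists and gives an explicit presentation.

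The main obstacle is verifying that the unit and counit of this adjunction are isomorphisms, equivalently that $r$ and $L$ are mutually inverse on objects. The subtlety is that evaluating $rL(F)$ at a generator $A_i$ amounts to commuting a non-filtered, non-finite colimit past the hom-functor $\Hom_{\EuScript{E}}(A_i,-)$, and only the combination of stable coproducts (i), kernel-pair quotients (iii), and stably exact forks (iv) makes this commutation valid after passing to a cover. Assembling these commutations --- essentially descent and Beck--Chevalley style arguments run by hand on the coequalizer-of-coproduct description of the colimit --- is where the bulk of the technical work lies; once it is done, both $\eta\colon\mathrm{id}\Rightarrow rL$ and $\varepsilon\colon Lr\Rightarrow\mathrm{id}$ can be checked to be isomorphisms by testing against the generators $\{A_i\}$.
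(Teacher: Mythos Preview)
The paper does not give its own proof of this theorem: Theorem~\ref{Giraud} is stated as a cited result from Mac~Lane--Moerdijk \cite{maclane}, and the paper's original contribution in this section is the subsequent Proposition~\ref{Giraud corollary}. So there is no in-paper proof to compare your proposal against.

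That said, your outline is recognisably the standard Mac~Lane--Moerdijk argument, and at the level of a plan it is sound. A few remarks if you intend to flesh it out. First, your definition of the topology $J$ on $\EuScript{C}$ uses effective epimorphisms, whereas the paper (following \cite{maclane}) phrases it in terms of epimorphisms; these agree under axioms (ii)--(iv), but you should say so explicitly. Second, you should take $\EuScript{C}$ to be closed under finite limits (or at least pullbacks) in $\EuScript{E}$, not just the bare generating set, so that the base-change axiom for $J$ makes sense inside $\EuScript{C}$; Mac~Lane--Moerdijk handle this by enlarging the generating set. Third, your description of the essential-surjectivity step is honest about where the difficulty lies, but ``descent and Beck--Chevalley style arguments run by hand'' is still a placeholder rather than an argument; in practice the cleanest route is to show $L\dashv r$ and then check the counit is iso using that the generators detect isomorphisms, rather than computing $rL(F)$ directly. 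None of these are fatal to your plan, but each would need to be addressed before the sketch becomes a proof.
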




\begin{discMM}\label{MM epi}
Taken together, Giraud's axioms (ii) and (iv) imply that for each epimorphism $B\xrightarrow{f} A$, the fork $B\times_A B\ \substack{\to \\ \to}\ B\to A$ is stably exact.
The exactness implies $f$ is an effective epimorphism and the stability implies $f$ is a universal effective epimorphism.
\end{discMM}

\begin{note}
We use $Sh(\EuScript{E},J)$ to represent the category of sheaves on the category $\EuScript{E}$ under the topology $J$.
\end{note}

Suppose the category $\EuScript{E}$ has small hom-sets and all finite limits, satisfies Giraud's axioms, and whose small set of generators (axiom v) is $\EuScript{C}$.
In \cite{maclane} Mac Lane and Moerdijk specifically prove $\EuScript{E}\cong Sh(\EuScript{C},J)$ where $J$ is the Grothendieck topology on $\EuScript{C}$ defined by:
\begin{center} 
$S\in J(X)$ if and only if $\displaystyle \coprod_{(g\colon D\to X)\in S} D\to X$ is an epimorphism in $\EuScript{E}$.
\end{center} 
(In particular, Mac Lane and Moerdijk prove that $J$ is a Grothendieck topology.)

\begin{prop}\label{Giraud corollary}
Suppose the category $\EuScript{E}$ has small hom-sets and all finite limits, satisfies Giraud's axioms, and whose small set of generators (axiom v) is $\EuScript{C}$.
Then $\EuScript{E}$ is equivalent to $Sh(\EuScript{C},C)$ where $C$ is the canonical topology on $\EuScript{C}$.
\end{prop}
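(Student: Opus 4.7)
My plan is to invoke Mac Lane and Moerdijk's proof of Giraud's theorem, which establishes $\EuScript{E} \simeq Sh(\EuScript{C}, J)$ for the topology $J$ defined by $S \in J(X)$ iff $\coprod_{(g\colon D \to X) \in S} D \to X$ is an epimorphism in $\EuScript{E}$. The proposition will follow once we prove that $J$ coincides with the canonical topology $C$ on $\EuScript{C}$.

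For the inclusion $J \subseteq C$, the Mac Lane-Moerdijk equivalence sends $Y \in \EuScript{E}$ to $\Hom_{\EuScript{E}}(-, Y)|_{\EuScript{C}}$, which for $Y \in \EuScript{C}$ is the $\EuScript{C}$-representable $\Hom_{\EuScript{C}}(-, Y)$. Hence every representable on $\EuScript{C}$ is a $J$-sheaf, and since $C$ is by definition the largest topology making representables sheaves, $J \subseteq C$.

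For the reverse inclusion $C \subseteq J$, I plan to use Discussion \ref{MM epi}, which (via Giraud axioms (ii) and (iv)) identifies epimorphisms in $\EuScript{E}$ with universal effective epimorphisms, together with Theorem \ref{basis for can top in special case} applied within $\EuScript{E}$: together these show $S \in J(X)$ iff the family $\{A_\alpha \to X\}_{g \in S}$ of generators of $S$ generates a universal colim sieve in $\EuScript{E}$. So, given a universal colim sieve $S$ in $\EuScript{C}$, the task reduces to showing $\coprod_\alpha A_\alpha \to X$ is an epimorphism in $\EuScript{E}$. For this, take $u, v\colon X \to Z$ in $\EuScript{E}$ agreeing on every $g \in S$; the generating property of $\EuScript{C}$ reduces $u = v$ to verifying $uw = vw$ for each $w\colon A \to X$ with $A \in \EuScript{C}$, and the pullback $w^*S$ is then a universal colim sieve in $\EuScript{C}$ on $A$ on which $uw$ and $vw$ agree by construction.

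The main obstacle will be transferring the $\EuScript{C}$-colimit property of $A$ as $\colim_{w^*S} U$ into the desired equality $uw = vw$ as morphisms into an arbitrary $Z \in \EuScript{E}$. I expect to bridge this by iterating the generating reduction---testing equality of $uw, vw$ against further $\EuScript{C}$-morphisms $c\colon C \to A$---while using that $(wc)^*S = c^*(w^*S)$ remains a universal colim sieve in $\EuScript{C}$ by the universality hypothesis on $S$, together with the density of $\EuScript{C}$ as a generating set in the Grothendieck topos $\EuScript{E}$.
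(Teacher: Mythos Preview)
Your argument for $J \subseteq C$ is correct and is a pleasant alternative to the paper's approach: instead of directly verifying that every $J$-cover is a universal colim sieve, you observe that the Mac Lane--Moerdijk equivalence sends $Y \in \EuScript{C}$ to the representable $\Hom_{\EuScript{C}}(-, Y)$, so every representable is a $J$-sheaf and hence $J$ is subcanonical.

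Your argument for $C \subseteq J$, however, has a genuine gap. You correctly reduce (via the generating property of $\EuScript{C}$) to showing $uw = vw$ for each $w\colon A \to X$ with $A \in \EuScript{C}$, and you correctly note that $uw$ and $vw$ agree on the universal colim sieve $w^*S$. But the colim-sieve property of $w^*S$ is a statement about colimits \emph{in $\EuScript{C}$}: it says that maps from $A$ into objects of $\EuScript{C}$ are determined by their restrictions to $w^*S$. Your target $Z$ lives in $\EuScript{E}$, not $\EuScript{C}$, so this does not yield $uw = vw$. The proposed fix---iterating the generating-set reduction---merely reproduces the same unproven statement one level down (now for $uwc$ and $vwc$ with $c\colon C \to A$) and never terminates; there is no induction or well-founded descent available here. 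Equivalently, what you need is that $\Hom_{\EuScript{E}}(-,Z)|_{\EuScript{C}}$ is a sheaf for the topology $C$, and that is precisely the inclusion $C\subseteq J$ you are trying to prove.

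The paper avoids this difficulty by computing all the relevant colimits in $\EuScript{E}$ (using cocompleteness and stable coproducts of $\EuScript{E}$), not in $\EuScript{C}$. Its key step is the identity $\colim_S U \cong \colim_{T_S} U$, where $T_S$ is the sieve generated by the single map $\coprod_{(g\colon D\to X)\in S} D \to X$; via Corollary~\ref{eff epi's gen colim sieves} this converts ``$S$ is a colim sieve'' directly into ``$\coprod_g D_g \to X$ is an effective epimorphism'', which immediately gives the required epimorphism and hence $S \in J$.
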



\begin{proof}
Let $J$ be the topology defined above. 
Additionally, the above discussion 
implies that it suffices to show that $J$ is the canonical topology.
By Theorem \ref{can top is ucs}, we will instead show that every universal colim sieve is in $J$ and that every sieve in $J$ is a universal colim sieve.

By Remark \ref{stable implies coproducts and pullbacks commute}, coproducts and pullbacks commute and hence for any collection of morphisms $\{A_i\to X\}_{i\in I}$ in $\EuScript{E}$, the diagrams
\begin{center}
\begin{tikzcd}
\coprod_{I^2} (A_i\times_X A_j) \arrow[d, shift right=2] \arrow[d, shift left=2]\\
\coprod_{I} A_k
\end{tikzcd}
\hspace{0.05in} and \hspace{0.05in}
\begin{tikzcd}
\left(\coprod_I A_i \right)\times_X\left(\coprod_I A_j \right) \arrow[d, shift right=2] \arrow[d, shift left=2] \\
\coprod_{I} A_k
\end{tikzcd}
\end{center}
are isomorphic. Note: in both diagrams, the two maps down are the obvious ones induced/obtained from a pullback diagram. 
Thus
$$\Coeq\left(
\begin{tikzcd}
\coprod_{I^2} (A_i\times_X A_j) \arrow[d, shift right=2] \arrow[d, shift left=2]\\
\coprod_{I} A_k
\end{tikzcd}
\right) \cong
\Coeq\left(
\begin{tikzcd}
\left(\coprod_I A_i \right)\times_X\left(\coprod_I A_j \right) \arrow[d, shift right=2] \arrow[d, shift left=2] \\
\coprod_{I} A_k
\end{tikzcd}
\right).$$
But by Proposition \ref{colim is coeq} (which is usable since $\EuScript{E}$ is cocomplete),
$$\Coeq\left(
\begin{tikzcd}
\coprod_{I^2} (A_i\times_X A_j) \arrow[d, shift right=2] \arrow[d, shift left=2]\\
\coprod_{I} A_k
\end{tikzcd}
\right)\cong
\colim_{S}{U} \quad \text{where } S = \left<\{A_i\to X\}_{i\in I}\right>$$ and
$$\Coeq\left(
\begin{tikzcd}
\left(\coprod_I A_i \right)\times_X\left(\coprod_I A_j \right) \arrow[d, shift right=2] \arrow[d, shift left=2] \\
\coprod_{I} A_k
\end{tikzcd}
\right)\cong
\colim_{T_S}{U} \quad \text{where } T_S = \left< \left\{ \left(\coprod_I A_i\right)\to X \right\}\right>.$$
Hence
\begin{equation}\label{fact}
\begin{split}
\colim_{S}{U}&\cong\colim_{T_S}{U} \\
\text{where } S = \left<\{A_i\to X\}_{i\in I}\right> \quad &\text{and} \quad T_S = \left< \left\{ \left(\coprod_I A_i\right)\to X \right\}\right>
\\
\text{for any generating set } & \{A_i\to X\}_{i\in I} \text{ of $S$.}
\end{split}
\end{equation}

Suppose $S$ is a universal colim sieve.
Since $S$ has the some generating set, then by the definition of colim sieve and (\ref{fact}),
$$X\cong \colim_{S}{U}\cong \colim_{T_S}{U}.$$
This implies that $T_S$ is a colim sieve.
Hence $\left(\coprod_{(g\colon D\to X)\in S} D\right) \to X$ is an effective epimorphism by Corollary \ref{eff epi's gen colim sieves} and so $S\in J(X)$.

For the converse, suppose that $S\in J(X)$.
Thus $p_s\colon \left(\coprod_{(g\colon D\to X)\in S} D\right) \to X$ is an epimorphism, which by Discussion \ref{MM epi} 
is a universal effective epimorphism.
Hence by Corollary \ref{eff epi's gen colim sieves}, $p_s$ generates a universal colim sieve called $T_S$.
Then by the definition of colim sieve and (\ref{fact}),
$$X\cong \colim_{T_S}{U} \cong \colim_{S}{U}.$$
Therefore $S$ is a colim sieve.

Similar to the last paragraph, we can use (\ref{fact})
to show that $f^\ast S$ is a colim sieve for any morphism $f$ in $\EuScript{E}$ if we know that $T_{f^\ast S}$ is a colim sieve.
So to finish the proof we will use the fact that $T_S$ is a universal colim sieve to show that $T_{f^\ast S}$ is a colim sieve.
Let $f\colon Y\to X$ be any morphism in $\EuScript{E}$.
Then by using $S$ as a generating collection for itself and Lemma \ref{pb sieve gen set}, $f^\ast S = \left< \{ A\times_X Y\to Y\ |\ A\to X\in S \} \right>$.
Similarly, using Lemma \ref{pb sieve gen set}, $f^\ast T_S = \left<\left\{ \left(\coprod_{(A\to X\in S)} A\right)  \times_X Y\to Y\right\}\right>$.
Then by Remark \ref{stable implies coproducts and pullbacks commute} 
$$\displaystyle \coprod_{(A\to X)\in S} (A\times_X Y) \cong \left(\coprod_{(A\to X) \in S} A\right)  \times_X Y$$
over $Y$.
Therefore,
$$\colim_{T_{f^\ast S}}{U}\cong \colim_{f^\ast T_S}{U} \cong Y$$
where the first isomorphism is due to the previous few sentences
and the second isomorphism is due to the fact that $T_S$ is a universal colim sieve.
Thus $T_{f^\ast S}$ is a colim sieve. 
\end{proof}

\section{Universal Colim Sieves in the Categories of Sets and Topological Spaces}\label{UCS ex of sets and top}

In this section we examine the canonical topology on the categories of sets, all topological spaces and compactly generated weakly Haudsdorff spaces.


\begin{note}
We will use \textbf{Sets} to denote the category of sets. We will use \textbf{Top} to denote the category of all topological spaces, \textbf{CG} to denote the category of compactly generated spaces, and \textbf{CGWH} to denote the category of compactly generated weakly Hausdorff spaces. When we want to talk about the category of topological spaces without differentiating between \textbf{Top} and \textbf{CGWH}, then we will use \textbf{Spaces}; all results about \textbf{Spaces} will hold for both \textbf{Top} and \textbf{CGWH}.
\end{note}

We will begin with a few reminders about the category of compactly generated weakly Hausdorff spaces based on the references \cite{strickland} and \cite{may}.
Specifically, there are functors $k\colon\textbf{Top}\to\textbf{CG}$ and $h\colon\textbf{CG}\to\mathbf{CGWH}$ such that
\begin{itemize}
\item For a topological space $X$ with topology $\tau$, a subset $Y$ of $X$ is called \text{$k$-closed} if $u^{-1}(Y)$ is closed in $K$ for every continuous map $u\colon K\to X$ and compact Hausdorff space $K$. The collection of all $k$-closed subsets, called $k(\tau)$, is a topology.
\item The functor $k$ takes $X$ with topology $\tau$ to the set $X$ with topology $k(\tau)$.
\item $k$ is right adjoint to the inclusion functor $\iota\colon\textbf{CG}\to\textbf{Top}$.
\item $h(X)$ is $X/E$ where $E$ is the smallest equivalence relation on $X$ closed in $X\times X$.
\item $h$ is left adjoint to the inclusion functor $\iota'\colon\textbf{CGWH}\to\textbf{CG}$. 
\item A limit in $\textbf{CGWH}$ is $k$ applied to the limit taken in $\textbf{Top}$, i.e. for a diagram $F\colon I\to \textbf{CGWH}$, the limit of $F$ is $k(\lim_{I} \iota \iota' F)$.
\item A colimit in $\textbf{CGWH}$ is $h$ applied to the colimit taken in $\textbf{Top}$, i.e. for a diagram $F\colon I\to \textbf{CGWH}$, the colimit of $F$ is $h(\colim_{I} \iota \iota' F)$.
\end{itemize}

\begin{prop}\label{top colims inj}
Let $S$ be a sieve on $X$ in either \textbf{Sets} or \textbf{Top}. Let $C$ be $\ds\colim_{S}{U}$. Then the natural map $\varphi\colon C \to X$ is an injection.
\end{prop}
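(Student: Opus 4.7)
The plan is to verify injectivity directly on underlying sets. In both \textbf{Sets} and \textbf{Top}, the colimit $C = \colim_{S}{U}$ admits a concrete description as a quotient of $\coprod_{f\in S} U(f)$: every element has the form $[f,a]$ with $f\colon A\to X$ in $S$ and $a\in A$, and $\varphi[f,a] = f(a)$. (In \textbf{Top} this works because the colimit taken in topological spaces carries the quotient topology on the same underlying set as the colimit taken in sets.) So it suffices to show that $f_A(a) = f_B(b)$ forces $[f_A,a] = [f_B,b]$.

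Suppose $f_A(a) = f_B(b) = x$. The key idea is to bring in the one-point object $\ast$ (a singleton in \textbf{Sets}, the one-point space in \textbf{Top}). Let $g_a\colon \ast\to A$ and $g_b\colon \ast\to B$ be the maps picking out $a$ and $b$; these are legitimate morphisms in either category. Then $f_A\circ g_a = f_B\circ g_b = x$ as morphisms $\ast\to X$, and since $S$ is a sieve closed under precomposition, the morphism $x\colon \ast\to X$ itself lies in $S$. Thus $g_a$ is a morphism in $S$ from the object $x$ to the object $f_A$, and $g_b$ is a morphism in $S$ from $x$ to $f_B$. Writing $\iota_f\colon U(f)\to C$ for the universal cocone maps and using their compatibility with these sieve morphisms,
$$\iota_{f_A}(a) \;=\; \iota_{f_A}(g_a(\ast)) \;=\; \iota_{x}(\ast) \;=\; \iota_{f_B}(g_b(\ast)) \;=\; \iota_{f_B}(b),$$
so $[f_A,a] = [f_B,b]$ in $C$, as needed.

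I do not expect any real obstacle here; the entire argument rests on the sieve's closure under precomposition, which supplies the auxiliary object $x\colon\ast\to X$ that glues the two representatives together inside $C$. The only subtlety is to make sure the argument runs verbatim in \textbf{Top}, and it does, because every set-theoretic point of a topological space corresponds to a unique continuous map from the one-point space, so the $g_a$ and $g_b$ above are automatically continuous. Note that the proof makes essential use of the existence of a terminal object in the category, which is why it is stated only for \textbf{Sets} and \textbf{Top} and not, say, for $R$-modules.
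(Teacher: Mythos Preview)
Your proof is correct and follows essentially the same strategy as the paper's: both arguments pick representatives for two colimit elements mapping to the same $x\in X$, then use the singleton $\{x\}\hookrightarrow X$ (which lies in $S$ by closure under precomposition) as a common refinement to force the representatives to coincide in $C$. One small terminological quibble: in your closing remark, the property you are actually using is not that $\ast$ is \emph{terminal} but that $\Hom(\ast,-)$ recovers the underlying set (so points are morphisms from $\ast$); in $R$\textbf{-Mod} the terminal object exists---it is $0$---but maps out of it do not detect elements, which is the real reason the argument fails there.
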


\begin{proof}
Suppose $\tilde{y},\tilde{z}\in C$ and $\varphi(\tilde{y}) = x = \varphi(\tilde{z})$. We can pick a $(Y\to X)\in S$ and a $y\in Y$ that represents $\tilde{y}$, i.e. where $y\mapsto \tilde{y}$ under the natural map $Y\to C$; similarly, we can pick a $(Z\to X)\in S$ and a $z\in Z$ representing $\tilde{z}$.
Then the inclusion $i\colon  \{ x \} \hookrightarrow X$ factors through both $Y$ and $Z$ by $x\mapsto y$ and $x\mapsto z$ respectively. Thus $i\in S$. Hence $\tilde{y}=\tilde{z}$ in $C$.
\end{proof}

\begin{cor}\label{cgwh colims inj}
Let $S$ be a sieve on $X$ in \textbf{CGWH}. Then the colimit over $S$ taken in \textbf{Top} is in \textbf{CGWH}, i.e. $h(\colim_{I} \iota \iota' U) = \colim_{I} \iota \iota' U$. Moreover, the natural map $\varphi\colon \colim_{S}{U} \to X$ is an injection.
\end{cor}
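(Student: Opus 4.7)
The plan is to reduce the statement to Proposition \ref{top colims inj} by showing that the colimit $C := \colim_{S} \iota\iota' U$ taken in \textbf{Top} already lies in \textbf{CGWH}, so that applying $h$ is redundant; the injectivity claim then transfers immediately from Proposition \ref{top colims inj}.

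First I would observe that $C$ is compactly generated: since $k\colon\textbf{Top}\to\textbf{CG}$ is right adjoint to the inclusion $\iota$, the inclusion $\iota$ preserves colimits, so the colimit of a diagram in \textbf{CG} computed in \textbf{Top} lands in \textbf{CG}. Next, Proposition \ref{top colims inj} (which is stated for \textbf{Top}) applies verbatim to $C$ and gives a continuous injection $\varphi\colon C\to X$.

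The key step is to argue that $C$ is weakly Hausdorff. I would use the equalizer characterization: a space $Y$ is weakly Hausdorff if and only if for every compact Hausdorff $K$ and every pair $u,v\colon K\to Y$, the set $\{k\in K : u(k)=v(k)\}$ is closed in $K$. Given any such $u,v\colon K\to C$, the injectivity of $\varphi$ yields
\[
\{k\in K : u(k)=v(k)\}\;=\;\{k\in K : \varphi u(k)=\varphi v(k)\},
\]
and the right-hand side is closed in $K$ because $X\in\textbf{CGWH}$. Hence $C$ is weakly Hausdorff.

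Since $h\colon\textbf{CG}\to\textbf{CGWH}$ is a reflector, it acts as the identity on weakly Hausdorff spaces, so $h(C)=C$. This is the first assertion. The natural map $\varphi\colon \colim_S U \to X$ of the corollary is then just the map supplied by Proposition \ref{top colims inj}, which is already injective. The only real subtlety is the equalizer argument in the third step; everything else is formal from the adjunctions $\iota\dashv k$ and $h\dashv\iota'$ recalled earlier in the section.
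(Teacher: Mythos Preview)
Your proposal is correct and follows essentially the same route as the paper: both arguments show $C$ is compactly generated, invoke Proposition~\ref{top colims inj} to get the continuous injection $\varphi\colon C\to X$, and then pull back the weak Hausdorff property of $X$ along $\varphi$. The only cosmetic difference is the characterization of ``weakly Hausdorff'' used in the last step: the paper uses the closed-diagonal criterion (from \cite{strickland}) and checks $(\varphi\times\varphi)^{-1}(\Delta_X)=\Delta_C$, whereas you use the equivalent equalizer criterion for maps out of compact Hausdorff spaces---these two are interchangeable, and the paper also flags the small subtlety (which you handle by ``applies verbatim'') that $S$ is a sieve in \textbf{CGWH} rather than \textbf{Top}, so one must note that the \emph{proof} of Proposition~\ref{top colims inj} goes through because $\{x\}\in\textbf{CGWH}$.
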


\begin{proof}
We will make use of the following Proposition from \cite{strickland}: if $Z$ is in \textbf{CG}, then $Z$ is weakly Hausdorff if and only if the diagonal subspace $\Delta_Z$ is closed in $Z\times Z$.
Additionally, we remark that colimits of compactly generated spaces computed in \textbf{Top} are automatically compactly generated.

Let $C = \colim_{S} \iota\iota' U$, i.e. $C$ is the colimit over $S$ taken in \textbf{Top}.
By Proposition \ref{top colims inj}, the natural map $\varphi\colon C\to X$ is an injection; we remark that it is not the statement of Proposition \ref{top colims inj} that gives this observation since $S$ is not a sieve in \text{Top}, instead the proof of Proposition \ref{top colims inj} holds in this situation since  $\{x\}$ is in \textbf{CGWH}.
Since $X$ is \textbf{CGWH}, then $\Delta_X$ is closed in $X\times X$.
Since $\varphi$ is a continuous injection, then $(\varphi\times\varphi)^{-1}(\Delta_X) = \Delta_C$ is closed in $C\times C$.
\end{proof}


\subsection{Basis and Presentation}\label{section basis for Top and Sets}



The categories \textbf{Sets}, \textbf{Top} and \textbf{CGWH} all satisfy the hypotheses of Theorems \ref{basis for can top in special case} and \ref{hinting at basis for can top in special case}. Thus we have the following corollaries of Theorems \ref{basis for can top in special case} and \ref{hinting at basis for can top in special case} based on what the universal effective epimorphisms are in each category.

\begin{prop}\label{can in sets}
In \textbf{Sets}, $\{A_\alpha\to X\}_{\alpha\in\EuScript{A}}$ is part of a basis for the canonical topology if and only if $\coprod_{\alpha\in\EuScript{A}} A_\alpha \to X$ is a surjection.
In particular, a sieve of the form $S=\langle \{A_\alpha\to X\}_{\alpha\in \EuScript{A}}\rangle$ on $X$ is in the canonical topology if and only if $\ds\coprod_{\alpha\in\EuScript{A}}A_\alpha\to X$ is a surjection.
Moreover, every colim sieve is universal.
\end{prop}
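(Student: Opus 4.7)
The plan is to view this proposition as a direct application of Theorems~\ref{basis for can top in special case} and~\ref{hinting at basis for can top in special case} specialized to \textbf{Sets}, once we identify the (universal) effective epimorphisms in \textbf{Sets}. The three claims (basis characterization, characterization of sieves generated by a specific set, and automatic universality) will then fall out with little extra work.

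\textbf{Step 1: Set up.} First I would observe that \textbf{Sets} is cocomplete, has all pullbacks, and has coproducts that are disjoint and stable; this is standard. Thus the hypotheses of Theorems~\ref{basis for can top in special case} and~\ref{hinting at basis for can top in special case} both apply to \textbf{Sets}.

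\textbf{Step 2: Universal effective epimorphisms in \textbf{Sets} are exactly the surjections.} For the forward direction, an effective epimorphism is in particular an epimorphism, and epimorphisms in \textbf{Sets} are surjections. For the reverse direction, if $f\colon Y\to X$ is a surjection then the coequalizer of its kernel pair $Y\times_X Y \rightrightarrows Y$ computes $X$ (it is the set of fibers of $f$, which is in bijection with $X$), so $f$ is an effective epimorphism. Moreover, the pullback of a surjection along any map of sets is again a surjection, so every surjection is a universal effective epimorphism.

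\textbf{Step 3: Apply the theorems.} The basis statement is now immediate from Theorem~\ref{basis for can top in special case}: $\{A_\alpha\to X\}\in K(X)$ iff $\coprod A_\alpha\to X$ is a universal effective epimorphism, i.e.\ a surjection. Similarly the sieve-level statement follows from Theorem~\ref{hinting at basis for can top in special case} together with Proposition~\ref{reducing sieve generating set}: if $S=\langle\{A_\alpha\to X\}\rangle$, then $S$ is a colim sieve iff the single-morphism sieve generated by $\coprod A_\alpha\to X$ is a colim sieve, iff (by Corollary~\ref{eff epi's gen colim sieves}) the map $\coprod A_\alpha\to X$ is an effective epimorphism, iff it is surjective.

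\textbf{Step 4: Every colim sieve is universal.} Suppose $S$ is a colim sieve on $X$ and let $\alpha\colon Y\to X$ be any morphism. By Theorem~\ref{hinting at basis for can top in special case} there is some generating set $\{A_\beta\to X\}\subset S$ with $\coprod A_\beta\to X$ surjective. By Lemma~\ref{pb sieve gen set}, $\alpha^\ast S$ is generated by $\{A_\beta\times_X Y\to Y\}$. Since coproducts commute with pullback in \textbf{Sets}, $\coprod(A_\beta\times_X Y)\cong(\coprod A_\beta)\times_X Y\to Y$, and the latter is the pullback of a surjection along $\alpha$, hence itself a surjection. Applying Theorem~\ref{hinting at basis for can top in special case} once more, $\alpha^\ast S$ is a colim sieve. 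Since $\alpha$ was arbitrary, $S$ is universal. There is no real obstacle here; the only conceptual point is to recognize that the subset-of-the-sieve used in Theorem~\ref{hinting at basis for can top in special case} can be pulled back using Lemma~\ref{pb sieve gen set} and stability of surjections under pullback.
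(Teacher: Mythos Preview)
Your proposal is correct and follows essentially the same approach as the paper: verify that \textbf{Sets} meets the hypotheses of Theorems~\ref{basis for can top in special case} and~\ref{hinting at basis for can top in special case}, identify (universal) effective epimorphisms with surjections, and read off the conclusions. The only cosmetic difference is that the paper compresses your Step~4 into a single invocation of Theorem~\ref{hinting at basis for can top in special case}: since effective epimorphisms and universal effective epimorphisms coincide in \textbf{Sets}, the ``colim sieve'' and ``universal colim sieve'' conditions in that theorem are literally the same, so no separate pullback argument is needed.
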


\begin{proof}
It is easy to see in \textbf{Sets} that the effective epimorphisms are precisely the surjections.
Since pulling back a surjection yields a surjection, then the universal effective epimorphisms in the category of sets are also the surjections.
Lastly, this implies, by Theorem \ref{hinting at basis for can top in special case}, that every colim sieve is universal.
\end{proof}

\begin{rmk}
Since \textbf{Sets} is a Grothendieck topos, we can compare Proposition \ref{can in sets} to the proof of Proposition \ref{Giraud corollary}. 
Specifically, Proposition \ref{can in sets} allows us to determine if a sieve is in the canonical topology by looking only at the sieve's generating set whereas the proof of Proposition \ref{Giraud corollary} 
along with the Grothendieck topology $J$ require us to look at the entire sieve.
\end{rmk}

Recall that a quotient map $f$ is called \textit{universal} if every pullback of $f$ along a map yields a quotient map.

\begin{prop}\label{Top ucs characterization}
In \textbf{Top}, $\{A_\alpha\to X\}_{\alpha\in\EuScript{A}}$ is part of a basis for the canonical topology if and only if $\coprod_{\alpha\in\EuScript{A}} A_\alpha \to X$ is a universal quotient map.
Additionally, a sieve $S$ on $X$ is a (universal) colim sieve if and only if there exists some collection $\{A_\alpha\to X\}_{\alpha\in\EuScript{A}} \subset S$ such that $\ds\coprod_{\alpha\in\EuScript{A}} A_\alpha \to X$ is a (universal) quotient map.
In particular, $T = \langle\{f\colon Y\to X\}\rangle$ is a (universal) colim sieve if and only if $f$ is a (universal) quotient map.
\end{prop}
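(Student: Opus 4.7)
The plan is to invoke three results established earlier --- Theorem \ref{basis for can top in special case}, Theorem \ref{hinting at basis for can top in special case}, and Corollary \ref{eff epi's gen colim sieves} --- whose hypotheses are satisfied by \textbf{Top}, and then replace ``(universal) effective epimorphism'' throughout by ``(universal) quotient map''.

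First I would verify the standing hypotheses on \textbf{Top}: it is cocomplete, has all pullbacks, and its coproducts are disjoint and stable under pullback. Concretely, the coproduct of $\{E_\alpha\}$ is the disjoint union with the disjoint-union topology; each inclusion is an open injection and hence a monomorphism, $E_\alpha\times_{\coprod E_\alpha}E_\beta=\emptyset$ when $\alpha\neq\beta$, and for any $f\colon D\to\coprod E_\alpha$ the subsets $f^{-1}(E_\alpha)$ are open, partition $D$, and carry their subspace topologies, giving $D\cong\coprod_\alpha f^{-1}(E_\alpha)$. This establishes disjointness and stability.

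The core step is the identification of effective epimorphisms in \textbf{Top} with quotient maps. Given $f\colon Y\to X$, the kernel pair $Y\times_X Y$ is $\{(y,y')\in Y\times Y : f(y)=f(y')\}$ with the subspace topology, and its coequalizer in \textbf{Top} is $Y/{\sim_f}$ equipped with the quotient topology. If $f$ is a quotient map, then it is surjective (hence epi, as epimorphisms in \textbf{Top} are precisely the surjections), and the induced continuous bijection $Y/{\sim_f}\to X$ is a homeomorphism by the universal property of the quotient topology, so $f$ is an effective epimorphism. Conversely, if $f$ is an effective epi, then $f$ factors as the canonical quotient $Y\to Y/{\sim_f}$ followed by a homeomorphism $Y/{\sim_f}\xrightarrow{\sim}X$, and is therefore itself a quotient map. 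Applying this identification to every pullback of $f$ shows that ``universal effective epimorphism'' and ``universal quotient map'' also coincide.

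With this identification in hand, the basis statement is an instance of Theorem \ref{basis for can top in special case}, the general sieve characterization is Theorem \ref{hinting at basis for can top in special case}, and the singleton case is Corollary \ref{eff epi's gen colim sieves}. The only step requiring genuine content is the identification of effective epimorphisms with quotient maps, which is a short and standard computation; I expect no real obstacle beyond the bookkeeping of checking the hypotheses on \textbf{Top}.
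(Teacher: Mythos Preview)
Your proposal is correct and follows exactly the paper's approach: apply Theorems \ref{basis for can top in special case} and \ref{hinting at basis for can top in special case} (and Corollary \ref{eff epi's gen colim sieves} for the singleton case) after noting that effective epimorphisms in \textbf{Top} are precisely quotient maps. The paper simply cites this last identification as ``a well-known fact'' and leaves the verification of the standing hypotheses on \textbf{Top} implicit, whereas you spell both out; but the structure is identical.
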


\begin{proof}
It is a well-known fact that in \textbf{Top} the effective epimorphisms are precisely the quotient maps.
\end{proof}

\begin{prop}\label{CGWH ucs characterization}
In \textbf{CGWH}, $\{A_\alpha\to X\}_{\alpha\in\EuScript{A}}$ is part of the basis for the canonical topology if and only if $\coprod_{\alpha\in\EuScript{A}} A_\alpha \to X$ is a quotient map.
In particular, a sieve $S=\langle \{A_\alpha\to X\}_{\alpha\in \EuScript{A}}\rangle$ on $X$ is in the canonical topology if and only if $\ds\coprod_{\alpha\in\EuScript{A}}A_\alpha\to X$ is a quotient map.
Moreover, every colim sieve is universal.
\end{prop}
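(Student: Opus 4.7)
The plan is to apply Theorems \ref{basis for can top in special case} and \ref{hinting at basis for can top in special case}, which together reduce the proposition to identifying the (universal) effective epimorphisms in \textbf{CGWH}. First I would verify the hypotheses of these theorems: \textbf{CGWH} is cocomplete (colimits are $h$ applied to \textbf{Top}-colimits), admits all pullbacks ($k$ applied to \textbf{Top}-pullbacks), and its coproducts coincide with the \textbf{Top}-coproducts, since a disjoint union of CGWH spaces is already CGWH. Disjointness and stability of these coproducts then reduce to the corresponding statements in \textbf{Top}.

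Next I would identify the effective epimorphisms in \textbf{CGWH} as precisely the quotient maps, mimicking the \textbf{Top} argument. A quotient $f\colon Y\to X$ exhibits $X$ as the coequalizer of its kernel pair, and this kernel pair taken in \textbf{CGWH} agrees with the one taken in \textbf{Top} (it sits as a closed subspace of the CGWH product $Y\times Y$, hence is already CGWH and $k$-ification is a no-op). Conversely, an effective epimorphism in \textbf{CGWH} is a surjection (by the argument of Corollary \ref{cgwh colims inj}) and must carry the quotient topology by its coequalizer universal property.

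The main obstacle, and the reason the statement here is stronger than the analogous Proposition \ref{Top ucs characterization} for \textbf{Top}, is showing that every quotient map in \textbf{CGWH} is automatically \emph{universal}, i.e.\ that the \textbf{CGWH}-pullback of a quotient map is again a quotient map. This is the classical convenient-category fact recorded in \cite{strickland} and \cite{may}: the \textbf{CGWH}-pullback is obtained by $k$-ifying the \textbf{Top}-pullback, and this $k$-ification restores the quotient property that can fail in \textbf{Top} (equivalently, it follows from the cartesian closedness of \textbf{CGWH} combined with the fact that the \textbf{CGWH}-pullback $Y\times_X Z \to Z$ can be recognized via its behavior on test maps from compact Hausdorff spaces).

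Granting this universality, effective epimorphisms and universal effective epimorphisms coincide in \textbf{CGWH}, so Theorems \ref{basis for can top in special case} and \ref{hinting at basis for can top in special case} immediately yield both the basis characterization and the sieve characterization, and the final sentence ``every colim sieve is universal'' is a direct consequence.
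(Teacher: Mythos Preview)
Your proposal is correct and follows essentially the same route as the paper: invoke Theorems \ref{basis for can top in special case} and \ref{hinting at basis for can top in special case} (the paper notes just before the three propositions that \textbf{CGWH} satisfies their hypotheses), identify the effective epimorphisms in \textbf{CGWH} with the quotient maps, and appeal to \cite{strickland} for the fact that quotient maps in \textbf{CGWH} are automatically universal. The only cosmetic difference is that the paper identifies the effective epimorphisms by citing Corollary \ref{cgwh colims inj} to reduce the relevant coequalizer to the one computed in \textbf{Top} (and then quoting the \textbf{Top} characterization), whereas you argue the same identification directly inside \textbf{CGWH}.
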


\begin{proof}
This is a consequence of Corollary \ref{eff epi's gen colim sieves}, Corollary \ref{cgwh colims inj}, the fact that the universal effective epimorphisms in \textbf{Top} are precisely the universal quotient maps, and \cite[][Proposition 2.36]{strickland}, which states that every quotient map in \textbf{CGWH} is universal.
\end{proof}
\subsection{Examples in the category of Spaces}

In this section we will use our basis to talk about some specific examples; including a special circumstance (when a sieve is generated by one function) and how the canonical topology on the categories \textbf{CGWH} and \textbf{Top} can differ in this situation.

\begin{defn}
For a category $D$, we call $\mathfrak{A}\subset \operatorfont{ob}(D)$ a \textit{weakly terminal set} of $D$ if for every object $X$ in $D$, there exists some $A\in\mathfrak{A}$ and morphism $X\to A$ in $D$.

Additionally, if $F\colon D\to C$ is a functor and $D$ has a weakly terminal set $\mathfrak{A}$, then we call $\{F(A)\}_{A\in\mathfrak{A}}$ a \textit{weakly terminal set} of $F$. 
\end{defn}

For example, if $S = \langle \{A_\alpha\to X \}_{\alpha\in\mathfrak{A}} \rangle$ is a sieve on $X$ then $\{A_\alpha\}_{\alpha\in\mathfrak{A}}$ is the weakly terminal set of $U$.
Or as another example, $\{Y\}$ is the weakly terminal set of the diagram $Y \times_X Y\ \substack{\longrightarrow\\ \longrightarrow}\ Y$.
One easy consequence of this in \textbf{Top} is a reduction of the colimit topology: $V$ is open in the colimit if and only if the preimage of $V$ is open in each member of the weakly terminal set.



\begin{prop}\label{suff cond to get open colim maps}
Let $F\colon D \to \textbf{Spaces}$ be a functor where $D$ has a weakly terminal set $\mathfrak{A}$. Suppose $f_A\colon F(A)\to X$ is an open map for all $A\in\mathfrak{A}$, then the induced map $\varphi\colon \colim_{D} F\to X$ is an open map. Similarly, if the $f_A$ are all closed and $\mathfrak{A}$ is a finite set, then $\varphi$ is a closed map.
\end{prop}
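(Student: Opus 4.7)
The plan is to exploit the weakly terminal set $\mathfrak{A}$ in two ways: to cover the colimit $C:=\colim_D F$ by the images of the legs $i_A\colon F(A)\to C$ for $A\in\mathfrak{A}$, and to reduce the openness (or closedness) of an image under $\varphi$ to that of the individual $f_A$.

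First I would verify that $C=\bigcup_{A\in\mathfrak{A}} i_A(F(A))$. In \textbf{Top} this is immediate because $C$ is a quotient of $\coprod_{B} F(B)$: every point of $C$ has the form $i_B(x)$ for some $B\in D$ and $x\in F(B)$; the weakly terminal set gives a morphism $\psi\colon B\to A$ with $A\in\mathfrak{A}$, and the cocone identity $i_A\circ F(\psi)=i_B$ shows that the point is also $i_A(F(\psi)(x))$. In \textbf{CGWH} the \textbf{Top}-colimit is further quotiented by $h$, but both stages are surjective quotients, so the same reasoning applies. The same identities also yield the reduction of the colimit topology already noted in the excerpt: $i_B^{-1}(V)=F(\psi)^{-1}(i_A^{-1}(V))$ is open (resp.\ closed) whenever $i_A^{-1}(V)$ is, so a subset $V\subset C$ is open (resp.\ closed) if and only if $i_A^{-1}(V)$ is open (resp.\ closed) for every $A\in\mathfrak{A}$.

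Given an open $V\subset C$, the covering gives $V=\bigcup_{A\in\mathfrak{A}} i_A(i_A^{-1}(V))$, and applying $\varphi$ together with $\varphi\circ i_A=f_A$ yields $\varphi(V)=\bigcup_{A\in\mathfrak{A}} f_A(i_A^{-1}(V))$. Continuity of $i_A$ makes each $i_A^{-1}(V)$ open in $F(A)$, and openness of $f_A$ makes each $f_A(i_A^{-1}(V))$ open in $X$; a union of open sets is open, so $\varphi(V)$ is open. The closed case is identical, with ``open'' replaced by ``closed'' throughout, except that the final step needs a finite union, which is precisely where the assumption $|\mathfrak{A}|<\infty$ is consumed.

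I do not anticipate a serious obstacle here: the argument is essentially a bookkeeping exercise based on the weakly-terminal-set reduction observed just above the proposition, together with the fact that both the \textbf{Top}- and \textbf{CGWH}-colimit topologies arise as quotients of $\coprod_B F(B)$.
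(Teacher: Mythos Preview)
Your proof is correct and follows essentially the same route as the paper: both hinge on the set identity $\varphi(V)=\bigcup_{A\in\mathfrak{A}} f_A(i_A^{-1}(V))$, after which openness (resp.\ closedness) is immediate from the hypotheses on the $f_A$ and the behavior of unions. You supply more detail than the paper does---in particular you explicitly justify the covering $C=\bigcup_{A\in\mathfrak{A}} i_A(F(A))$ via the weakly terminal property and address the \textbf{CGWH} case by noting that $h$ is a further quotient---but the argument is the same.
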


\begin{proof}
Let $C= \colim F$ and $i_A\colon F(A)\to C$ be the natural maps. Both results follow from the easy set equality below for $B\subset C$
$$\varphi(B) = \bigcup_{A\in\mathfrak{A}} f_A(i_A^{-1}(B))$$
since $i_A^{-1}$, $f_A$ and unions respect open/closed sets in their respective scenarios.
\end{proof}

\begin{cor}\label{opens/closeds gen colim sieves}
Let $S=\langle \{f_\alpha\colon A_\alpha\to X\}_{\alpha \in \EuScript{A}} \rangle$ be a sieve on $X$ in \textbf{Spaces} with the induced map $\eta\colon \ds\coprod_{\alpha\in\EuScript{A}} A_\alpha \to X$ a surjection. If all of the $f_\alpha$ are open maps or if $\EuScript{A}$ is a finite collection and all of the $f_\alpha$ are closed maps, then $S$ is a colim sieve.
\end{cor}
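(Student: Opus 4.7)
The plan is to show that the natural comparison map $\varphi\colon \colim_{S} U\to X$ is a homeomorphism by checking three things in turn: it is injective, surjective, and open (or closed). Together these give a continuous bijection that is also open (or closed), hence a homeomorphism, which means $S$ is a colim sieve in \textbf{Top}; and because Corollary \ref{cgwh colims inj} shows that for a sieve the \textbf{Top}-colimit already lives in \textbf{CGWH}, the same argument will handle \textbf{Spaces} uniformly.

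First, by Proposition \ref{top colims inj} (respectively Corollary \ref{cgwh colims inj} in the \textbf{CGWH} case), $\varphi$ is an injection. Next, the induced map $\eta\colon \coprod_{\alpha\in\EuScript{A}} A_\alpha \to X$ factors through $\varphi$ via the canonical map $\coprod_{\alpha\in\EuScript{A}} A_\alpha \to \colim_{S} U$, so surjectivity of $\eta$ forces $\varphi$ to be surjective as well. Thus $\varphi$ is a continuous bijection.

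The key step is openness/closedness of $\varphi$, and this is where Proposition \ref{suff cond to get open colim maps} comes in. As noted in the text immediately following the definition of a weakly terminal set, $\{A_\alpha\}_{\alpha\in\EuScript{A}}$ is a weakly terminal set for the forgetful functor $U\colon S\to\textbf{Spaces}$ (since every object of $S$ is a morphism $f\colon Z\to X$ in $S$, and by the definition of generation such an $f$ factors through some $f_\alpha$, giving a map $Z\to A_\alpha$ in $S$). Applying Proposition \ref{suff cond to get open colim maps} with $D = S$, $F = U$, and this weakly terminal set, we conclude that $\varphi$ is an open map if every $f_\alpha$ is open, and a closed map if $\EuScript{A}$ is finite and every $f_\alpha$ is closed.

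I don't expect any substantial obstacle: the only mild subtlety is to verify the weakly terminal set claim explicitly (which is immediate from the definition of the generating set for a sieve) and, in the \textbf{CGWH} case, to confirm via Corollary \ref{cgwh colims inj} that the \textbf{Top}-colimit we are working with is also the \textbf{CGWH}-colimit, so that a homeomorphism in \textbf{Top} between \textbf{CGWH} spaces suffices to identify $\colim_S U$ with $X$ in \textbf{CGWH}.
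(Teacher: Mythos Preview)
Your proposal is correct and follows essentially the same approach as the paper: establish that $\varphi$ is a continuous bijection via Proposition \ref{top colims inj}/Corollary \ref{cgwh colims inj} and the surjectivity of $\eta$, then invoke Proposition \ref{suff cond to get open colim maps} to make $\varphi$ open or closed. You spell out a few details more explicitly (the factorization of $\eta$ through $\varphi$, the weakly terminal set verification, and the \textbf{CGWH} colimit point), but these are exactly the ingredients the paper's proof is using implicitly.
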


\begin{proof}
Let $\varphi\colon \ds\colim_{S}{U}\to X$ be the natural map. By Proposition \ref{top colims inj}, Corollary \ref{cgwh colims inj}, and the surjectivity of $\eta$, 
$\varphi$ is a continuous bijection. Then Proposition \ref{suff cond to get open colim maps} implies that $\varphi$ is open or closed, depending on the case, and hence an isomorphism.
\end{proof}

This corollary leads us to some nice examples of sieves we would hope are in the canonical topology and actually are!

\begin{ex}\label{open cover can top ex}
Let $X$ be any space and let $\{U_i\}_{i\in I}$ be an open cover of $X$. Then the inclusion maps $U_i\hookrightarrow X$ generate a universal colim sieve, call it $S$. Indeed, by Corollary \ref{opens/closeds gen colim sieves}, $S$ is a colim sieve.
Universality is obvious, as the preimage of an open cover is an open cover.
\end{ex}

\begin{ex}
Let $X$ be any space and let $K_1, \dots, K_n$ be a closed cover of $X$. For the exact same reasons as the previous example, the inclusions $K_i\hookrightarrow X$ generate a sieve in the canonical topology.
\end{ex}


Before we give our next example, we rephrase \cite[][Theorem 1]{DayKelly}, which completely characterizes universal quotient maps in \textbf{Top}:

\begin{thm}[Day and Kelly, 1970]\label{Day and Kelly}
Let $f\colon Y\to X$ be a quotient map. Then $f$ is a universal quotient map if and only if for every $x\in X$ and cover $\{G_\alpha\}_{\alpha\in\Lambda}$ of $f^{-1}(x)$ by opens in $Y$, there is a finite set $\{\alpha_1, \dots, \alpha_n\}\subset\Lambda$ such that $fG_{\alpha_1}\cup\dots\cup fG_{\alpha_n}$ is a neighborhood of $x$.
\end{thm}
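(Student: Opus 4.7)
The plan is to prove both implications, with the reverse direction being a direct pullback computation and the forward direction requiring a carefully engineered test space. I will write $\tilde f\colon Y\times_X Z \to Z$ for the pullback of $f$ along a map $g\colon Z\to X$. For the direction $(\Leftarrow)$, assume the neighborhood condition holds and fix any $g$; to show $\tilde f$ is a quotient, I would start with $U\subset Z$ satisfying $\tilde f^{-1}(U)$ open and take $z\in U$, $x:=g(z)$. For each $y\in f^{-1}(x)$ choose basic opens $G_y\subset Y$ and $V_y\subset Z$ with $y\in G_y$, $z\in V_y$, and $(G_y\times V_y)\cap (Y\times_X Z)\subset \tilde f^{-1}(U)$. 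The $\{G_y\}$ cover $f^{-1}(x)$, so the hypothesis produces finitely many $y_1,\dots,y_n$ with $W:=\bigcup_i fG_{y_i}$ a neighborhood of $x$. A short unwinding of the pullback condition shows that $\bigl(\bigcap_i V_{y_i}\bigr)\cap g^{-1}(W)$ is a neighborhood of $z$ contained in $U$, proving $U$ open and hence $\tilde f$ a quotient.

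For the direction $(\Rightarrow)$, assume $f$ is a universal quotient map and suppose for contradiction the condition fails at some $x$ with cover $\{G_\alpha\}_{\alpha\in\Lambda}$ of $f^{-1}(x)$. Then for every open neighborhood $V$ of $x$ and finite $F\subset\Lambda$, one may pick $p_{V,F}\in V\setminus\bigcup_{\alpha\in F} fG_\alpha$. Let $\mathcal{D}$ be the set of such pairs, directed by $(V,F)\leq (V',F')$ iff $V'\subset V$ and $F\subset F'$, and set $Z=\mathcal{D}\sqcup\{\infty\}$ with the topology in which every $d\in\mathcal{D}$ is isolated and the neighborhoods of $\infty$ are exactly the sets containing a tail $\{d:d\geq d_0\}\cup\{\infty\}$. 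Define $g\colon Z\to X$ by $g(\infty)=x$ and $g(d)=p_d$; continuity follows by the tail argument using $d_0=(V,\emptyset)$. I then claim $\tilde f$ is not a quotient: take $U=\{\infty\}$, which is not open in $Z$. For each $y\in f^{-1}(x)$ pick $\alpha\in\Lambda$ with $y\in G_\alpha$ and set $W_\alpha=\{\infty\}\cup\{(V',F')\in\mathcal{D}:\alpha\in F'\}$, an open neighborhood of $\infty$. For any $d=(V',F')\in W_\alpha\setminus\{\infty\}$ we have $g(d)=p_d\notin fG_\alpha$, so no $y'\in G_\alpha$ satisfies $f(y')=g(d)$; thus $(G_\alpha\times W_\alpha)\cap (Y\times_X Z)\subset \{\infty\}\times f^{-1}(x)$. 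Varying $y$ exhibits $\tilde f^{-1}(U)=\{\infty\}\times f^{-1}(x)$ as a union of opens, contradicting that $f$ is a universal quotient.

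The main obstacle is designing the test space $Z$ so that $g$ is continuous at $\infty$ while the pullback fiber $\{\infty\}\times f^{-1}(x)$ becomes isolated in $Y\times_X Z$. The directed-set-plus-limit-point construction is the natural candidate: the failure of the condition supplies a witness $p_{V,F}$ for each direction, and including the index set $F$ as a second coordinate in the directed-set structure is exactly what makes the opens $G_\alpha\times W_\alpha$ separate the fiber from the rest of the pullback.
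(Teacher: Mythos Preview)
The paper does not prove this theorem; it is quoted from \cite{DayKelly} and used as a black box, so there is no in-paper proof to compare against.

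Your argument is correct in both directions. One cosmetic slip: in the final paragraph you write $\{\infty\}\times f^{-1}(x)$ where the factor order should be $f^{-1}(x)\times\{\infty\}$ to match $Y\times_X Z$; this does not affect the reasoning.

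For what it is worth, the paper does remark (in the example immediately following the theorem statement) that Day and Kelly's own proof builds the test space via a \emph{filter}: one retopologizes $X$ so that the neighborhoods of $x$ are controlled by the finite unions $\bigcup_{\alpha\in F} fG_\alpha$, and then checks that the identity map from this retopologized $X$ to $X$ is a pullback along which $f$ fails to be a quotient. Your directed-set-with-adjoined-limit-point construction is the net-theoretic counterpart of that filter construction: indexing by pairs $(V,F)$ and choosing witnesses $p_{V,F}\in V\setminus\bigcup_{\alpha\in F} fG_\alpha$ is exactly how one converts the filter argument into a convergence argument. The two approaches are equivalent; yours has the mild expository advantage that the second coordinate $F$ in the index makes it visually obvious why $G_\alpha\times W_\alpha$ isolates the fiber over $\infty$ in the pullback.
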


\begin{ex}\label{direct limit top example}
Consider the diagram $B_1\to B_2\to B_3\to \dots$ and the direct limit $B=\colim B_n$ in \textbf{Top}.
Let $S = \langle\{\iota_n\colon B_n\to B\,|\, n\in\mathbb{N}\}\rangle$ where $\iota_n$ are the natural maps into the colimit.
By Proposition \ref{Top ucs characterization}, $S$ is a colim sieve because $\coprod_{n\in\mathbb{N}}B_n\to B$ is obviously a quotient map.
However, $S$ is not necessarily in the canonical topology -- we can use Proposition \ref{Top ucs characterization} on specific examples to see when $S$ is and is not in the canonical topology.

For example, suppose there exists an $N$ such that $B_m = B_N$ whenever $m>N$.
Then $B = B_N$.
Hence it is easy to see by Day and Kelly's condition that the map $\coprod_{n\in\mathbb{N}}B_n\to B$ is a universal quotient map.
Therefore, the $S$ from this example is in the canonical topology.

As another example, take $B_n = \mathbb{R}^n$ and let $B_n\to B_{n+1}$ be the closed inclusion map $(x_1,\dots,x_n)\mapsto(x_1,\dots,x_n,0)$.
Use $\mathbb{R}^\infty$ to denote the direct limit.
We claim that $\coprod_{n\in\mathbb{N}}\mathbb{R}^n\to\mathbb{R}^\infty$ is not a universal quotient map.
Indeed, consider Day and Kelly's condition; take $x = 0\in\mathbb{R}^\infty$ and the open cover in $\coprod_{n\in\mathbb{N}}\mathbb{R}^n$ consisting of open disks $D^n\subset\mathbb{R}^n$ centered at the origin with fixed radius $\epsilon>0$.
Pick any finite collection $D^{n_1},\dots,D^{n_k}$ with $n_1<\dots<n_k$.
Then for $i=1,\dots,k$ we can view $D^{n_i}$ as a subset of $\mathbb{R}^{n_k}$.
Hence $\cup_{i=1}^k \iota_{n_i}(D^{n_i})$ is $\cup_{i=1}^k \iota_{n_k}(D^{n_i})\subset \iota_{n_k}(\mathbb{R}^{n_k}$).
However, by dimensional considerations, we can see that for all $b\in\mathbb{N}$, $\iota_b(\mathbb{R}^b)$ contains no open sets of $\mathbb{R}^\infty$ and hence $\cup_{i=1}^k \iota_{n_i}(D^{n_i})$ cannot be a neighborhood of $x$ in $\mathbb{R}^\infty$.
Remark: To see that $\iota_b(\mathbb{R}^b)$ contains no open sets, suppose to the contrary and call the open set $V$.
Then $\iota_{b+1}^{-1}(V)$ is open in $\mathbb{R}^{b+1}$ and in particular, contains an open ball of dimension $b+1$.
Thus dimensional considerations imply that $\iota_{b+1}^{-1}(V)$ is not contained in the image of $\mathbb{R}^b$ in $\mathbb{R}^{b+1}$.
Since each $\iota_n$ is an inclusion map, then $\iota_{b+1}\iota_{b+1}^{-1}(V)\not\subset \iota_{b+1}(\mathbb{R}^b)$ and so
$V$ is not contained in $\iota_b(\mathbb{R}^b)$, which is our contradiction.
Therefore, the $S$ from this example is not in the canonical topology.
\end{ex}

\begin{ex}\label{direct limit top example for CGWH}
Consider the diagram $B_1\to B_2\to B_3\to \dots$ and the direct limit $B=\colim B_n$ in \textbf{CGWH}.
Let $S = \langle\{\iota_n\colon B_n\to B\,|\, n\in\mathbb{N}\}\rangle$ where $\iota_n$ are the natural maps into the colimit.
Then by Proposition \ref{CGWH ucs characterization}, $S$ is a universal colim sieve because $\coprod_{n\in\mathbb{N}}B_n\to B$ is a quotient map.
\end{ex}

Now we shift our focus to sieves that can be generated by one map, called \textit{monogenic sieves}.
There are many reasons one could focus on these kinds of sieves, however by Proposition \ref{reducing sieve generating set}, 
if we fully comprehend when monogenic sieves are in the canonical topology, then we can (in some sense) completely understand the canonical topology.
From this point onward, this section will be about monogenic sieves; in other words, by Proposition \ref{Top ucs characterization} and Proposition \ref{CGWH ucs characterization}, we will be focusing on (universal) quotient maps.

\begin{rmk}
Some examples will talk about the space $\mathbb{R}/\mathbb{Z}$.
In this section, this space is not a group quotient but instead is the squashing of the subspace $\mathbb{Z}$ to a point.
\end{rmk}

\begin{ex}
Consider the quotient maps $f\colon S^n\to \mathbb{R}P^n$ and $g\colon \mathbb{R}\to \mathbb{R}/\mathbb{Z}$.
There is some subtly, which will depend on the category we are in, in determining if $f$ or $g$ generate universal colim sieves.
Throughout the rest of this section we will continue to explore this particular example. 
\end{ex}


\bigskip

\noindent\textsc{Monogenic Sieves in \textbf{CGWH}}
\bigskip

By Proposition \ref{CGWH ucs characterization}, if $X$ and $Y$ are in \textbf{CGWH} and $h\colon Y\to X$, then $\langle\{h\}\rangle$ is in the canonical topology if and only if $h$ is a quotient map.
Therefore, we immediately get the following examples:

\begin{ex}
Topological manifolds are in \textbf{CGWH}. Thus $S^n$ and $\mathbb{R}P^n$ are in \textbf{CGWH}. Hence $\langle\{ f\colon S^n\to \mathbb{R}P^n \}\rangle$ is in the canonical topology. 
\end{ex}

\begin{ex}
Every CW-complex is in \textbf{CGWH}. Thus $\mathbb{R}$ and $\mathbb{R}/\mathbb{Z}$ are in \textbf{CGWH}. Hence $\langle\{ g\colon  \mathbb{R}\to\mathbb{R}/\mathbb{Z}\}\rangle$ is in the canonical topology. 
\end{ex}
\bigskip

\noindent\textsc{Monogenic Sieves in \textbf{Top}}
\bigskip

This section will heavily rely on Theorem \ref{Day and Kelly} (the Theorem by Day and Kelly characterizing universal quotient maps in \textbf{Top}) because a monogenic sieve generated by $f$ is in the canonical topology if and only if $f$ is a universal quotient map.

\begin{ex}
Day and Kelly's theorem implies that every open quotient map is a universal quotient map.
Therefore, the quotient map $f\colon S^n\to \mathbb{R}P^n$ is a universal quotient map and $\langle \{ f\colon S^n\to \mathbb{R}P^n \}\rangle$ is in the canonical topology.
\end{ex}

\begin{ex}
The quotient map $g\colon \mathbb{R}\to \mathbb{R}/\mathbb{Z}$ is not universal. We will demontrate this in two ways, first by using Day and Kelly's theorem and second by directly showing $g$ is not universal. Note: many sets of $\mathbb{R}/\mathbb{Z}$ will be written as if they are in $\mathbb{R}$ for ease of presentation.

(i) We will look at Day and Kelly's condition for $\mathbb{Z}\in \mathbb{R}/\mathbb{Z}$ with the open cover (in $\mathbb{R}$) $\{G_i \coloneqq (i-m,i+m)\}_{i\in\mathbb{Z}}$ for a fixed $m\in\left(0,\frac{1}{2}\right)$.
For any open set $U$ of $\mathbb{R}/\mathbb{Z}$ containing $\mathbb{Z}$, the quotient topology tells us that $g^{-1}(U)$ is an open neighborhood of $\mathbb{Z}\subset\mathbb{R}$.
But for any $n$, $g^{-1}(\bigcup_{k=1}^n gG_{i_k}) = \mathbb{Z} \cup \left(\bigcup_{k=1}^n (i_k-m, i_k+m)\right)$ is not a neighborhood of $\mathbb{Z}\subset\mathbb{R}$.
So there cannot be any open set of $\mathbb{R}/\mathbb{Z}$ containing $\mathbb{Z}$ that is contained in $\bigcup_{k=1}^n gG_{i_k}$ for any finite collection of the cover.

(ii) To directly show that $g$ is not universal we need to come up with a space and map to $\mathbb{R}/\mathbb{Z}$ where $g$ pulledbacked along this map is not a quotient map. Our candidate is the following:
Let $t(\mathbb{R}/\mathbb{Z})$ be the set $\mathbb{R}/\mathbb{Z}$ with the topology where $U$ (written as if it is in $\mathbb{R})$ is said to be open if (a) $\mathbb{Z}\not\subset U$ or (b) $U$ contains $\mathbb{Z}$ and is a neighborhood (in the typical topology) of $(\mathbb{Z}-\{\text{finitely many or no points}\})$.
Remark: this topology was used in Day and Kelly's paper (in the proof of their theorem), however they defined the topology using a filter and we have merely rephrased it for convenience.

Define $\kappa\colon t(\mathbb{R}/\mathbb{Z}) \to \mathbb{R}/\mathbb{Z}$ by the set identity map; this is a continuous map.
As a set, the pullback of $\text{domain}(g)$ along $\kappa$ is $\mathbb{R}$ but since it now has the limit topology, we denote the pullback as $t(\mathbb{R})$; in particular, $t(\mathbb{R})$ is $\mathbb{R}$ with the discrete topology.
Denote the projection maps as $g'\colon  t(\mathbb{R})\to t(\mathbb{R}/\mathbb{Z})$ and $\kappa'\colon  t(\mathbb{R}) \to \mathbb{R}$.

We claim that $g'$ is not a quotient map, i.e. there is some non-open set $B$ in $t(\mathbb{R}/\mathbb{Z})$ with $(g')^{-1}(B)$ open in $t(\mathbb{R})$.
Since every $(g')^{-1}(B)$ is open in $t(\mathbb{R})$, then we merely need to find a $B$ that is not open in $t(\mathbb{R}/\mathbb{Z})$; $B = \{\mathbb{Z}\}$ obviously works.
\end{ex}

The above example shows us that quotient maps of the form $X\to X/A$ may not generate universal colim sieves. 
So let's understand these special quotient maps a little better.
Specifically, using Day and Kelly's theorem, we can completely state what kinds of subspaces $A$ yield universal quotient maps $X\to X/A$:

\begin{cor}\label{DK corollary}
The quotient map $\pi\colon X\to X/A$ is universal if and only if both of the following properties hold:
\begin{enumerate}
\item If $A$ is not open, then for every open cover $\{G_\alpha\}_{\alpha \in \Lambda}$ of $(\partial A)\cap A$ in $X$ there is a finite collection $\{\alpha_1, \dots, \alpha_n\} \subset \Lambda$ with $A\cup G_{\alpha_1}\cup \dots\cup G_{\alpha_n}$ open in $X$.
\item If $A$ is not closed, then for every open $U$ in $X$ such that $U\cap (\overline{A}-A)\neq \emptyset$, $U\cup A$ is open in $X$.
\end{enumerate}
\end{cor}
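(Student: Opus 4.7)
The plan is to apply Theorem \ref{Day and Kelly} point-by-point in $X/A$. The points of $X/A$ split into two classes: the collapsed point $[A]$, whose $\pi$-preimage is $A$, and the points $[p]$ for $p\in X\setminus A$, each having preimage the singleton $\{p\}$. I would show that the Day--Kelly condition at $[A]$ is equivalent to property 1 (or is automatic when $A$ is open), and that the Day--Kelly condition at $[p]$ is automatic when $p\notin\overline{A}$ and equivalent to property 2 when $p\in\overline{A}\setminus A$; combining the cases proves the corollary.

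The key technical observation I would use throughout is that a subset $B\subseteq X/A$ is a neighborhood of a point iff $\pi^{-1}(B)$ contains an open \emph{saturated} subset around the preimage, where saturated means either disjoint from $A$ or containing all of $A$. From this, at $[A]$ the Day--Kelly condition translates into the existence of an open $W\subseteq X$ with $A\subseteq W\subseteq A\cup G_{\alpha_1}\cup\cdots\cup G_{\alpha_n}$. Replacing $W$ by $W\cup G_{\alpha_1}\cup\cdots\cup G_{\alpha_n}$, which equals $A\cup G_{\alpha_1}\cup\cdots\cup G_{\alpha_n}$ since $W\supseteq A$, shows the condition is equivalent to $A\cup G_{\alpha_1}\cup\cdots\cup G_{\alpha_n}$ itself being open. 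Since $\operatorname{int}(A)$ is open and $A=\operatorname{int}(A)\cup((\partial A)\cap A)$, any open cover of $A$ restricts to an open cover of $(\partial A)\cap A$, and any open cover of $(\partial A)\cap A$ becomes an open cover of $A$ after adjoining $\operatorname{int}(A)$; chasing both implications reduces the condition at $[A]$ to property 1. When $A$ is open, $A\cup G_{\alpha_1}\cup\cdots\cup G_{\alpha_n}$ is automatically open, so Day--Kelly at $[A]$ is trivial.

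For $[p]$ with $p\in X\setminus A$, if $p\notin\overline{A}$ then a small open neighborhood of $p$ disjoint from $A$ is saturated, so its image in $X/A$ is already open and Day--Kelly holds for free. If $p\in\overline{A}\setminus A$, then every open $G\ni p$ meets $A$, and the saturation condition forces any saturated open in $X$ containing $p$ to contain all of $A$. Day--Kelly at $[p]$ then reduces to: for every open $G\ni p$, there is an open $W\subseteq X$ with $A\cup\{p\}\subseteq W\subseteq G\cup A$; the same union trick, replacing $W$ by $W\cup G=G\cup A$, shows this is equivalent to $G\cup A$ being open, which is property 2.

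The main obstacle is the careful bookkeeping with saturated opens and the finite-subcollection flexibility in Day--Kelly: property 1 does not require the chosen finite subcollection to itself form a cover of $(\partial A)\cap A$, and one must exploit this freedom when shuttling between ``cover of $A$'' and ``cover of $(\partial A)\cap A$''.
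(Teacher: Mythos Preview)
Your proposal is correct and follows essentially the same approach as the paper: both apply the Day--Kelly criterion point-by-point in $X/A$, splitting into the collapsed point $[A]$, points over $X\setminus\overline{A}$, and points over $\overline{A}\setminus A$, and both reduce the condition at $[A]$ by adjoining $\operatorname{int}(A)$ to pass between covers of $A$ and covers of $(\partial A)\cap A$. Your framing via saturated opens and the ``union trick'' $W\cup\bigcup G_{\alpha_i}=A\cup\bigcup G_{\alpha_i}$ is a clean way to package the preimage computations the paper carries out explicitly; the one small point to make precise is that when none of the chosen $G_{\alpha_i}$ meet $A$ you may need to throw in one more $G_{\alpha_0}$ from the cover that does (the paper also glosses over this), but this is harmless.
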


\begin{proof}
We will be using Theorem \ref{Day and Kelly} in two ways: first by finding the necessary conditions for $\pi$ to be a universal quotient map (i.e. proving the forward direction) and then second by checking the sufficient conditions in the three cases (i) $x = A$, (ii) $x\in X-\overline{A}$, and (iii) $x\in \overline{A}-A$ (i.e. proving the backward direction).

First suppose that $\pi$ is a universal quotient map.
To see that the first property is necessary, assume that $(\partial A)\cap A\neq \emptyset$, i.e. $A$ is not open, and we have an open cover $\{G_\alpha\}_{\alpha\in\Lambda}$ of $(\partial A)\cap A$.
Then we can expand this cover to an open cover of $A$ by adding $Int(A)$ to $\{G_\alpha\}_{\alpha\in\Lambda}$.
Now by assumption (using the point $A$ in $X/A$) there is a finite subcollection $G_{\alpha_1}, \dots, G_{\alpha_n}, Int(A)$ such that $\pi G_{\alpha_1}\cup\dots\cup \pi G_{\alpha_n}\cup \pi Int(A)$ is a neighborhood of $A$ in $X/A$.
But $\pi Int(A)\subset \pi G_{\alpha}$ since $G_{\alpha}\cap A\neq \emptyset$ and so $Int(A)$ is not necessary in our finite subcollection. Thus $\pi G_{\alpha_1}\cup\dots\cup \pi G_{\alpha_n}$ is a neighborhood of $A$; let $U$ be 
an open subset of $\pi G_{\alpha_1}\cup\dots\cup \pi G_{\alpha_n}$ containing $A$. Now by looking at the preimages of $U$ and
$\bigcup_{i=1}^n \pi G_{\alpha_i}$ in $X$, we get that
$$A\subset \pi^{-1}(U)\subset \pi^{-1}(\bigcup_{i=1}^n \pi G_{\alpha_i}) = G_{\alpha_1}\cup\dots\cup G_{\alpha_n}\cup A.$$
Since $\pi^{-1}(U)$ is open, then the above expression
implies $A\subset Int(G_{\alpha_1}\cup\dots\cup G_{\alpha_n}\cup A)$. But since all of the $G_{\alpha}$ are open, then $G_{\alpha_1}\cup\dots\cup G_{\alpha_n}\cup A$ is open. Therefore, the first property is necessary.

To see that the second property is necessary, assume that $A$ is not closed and $U$ is any open neighborhood of a fixed $x\in \overline{A}-A$ in $X$. Since $U$ is an open cover of $\pi^{-1}(\pi(x))=x$, then by Theorem \ref{Day and Kelly}, $\pi U$ is a neighborhood of $x$; let $V$ be an open subset of $\pi U$ that contains $x$. 
Then by looking at the preimages of $V$ and $\pi U$, we see (using that $U$ intersects $A$ nontrivially) that
$$A\subset \pi^{-1}(V) \subset \pi^{-1}(\pi U) = U\cup A.$$
But since $\pi^{-1}(V)$ is open, then $A\subset Int(U\cup A)$, i.e. $U\cup A$ is open. Therefore, the second condition is necessary.

Second let's assume the two conditions hold. We will show $\pi$ is a universal quotient map by checking that the conditions of Theorem \ref{Day and Kelly} hold in all three locations in $X/A$ (i.e. for (i) $x = A$, (ii) $x\in X-\overline{A}$, and (iii) $x\in \overline{A}-A$).

(i) For $A\in X/A$, take any open cover $\{G_\alpha\}_{\alpha\in\Lambda}$ of $A$ in $X$. If $A$ is open in $X$, then $\{A\}$ is open in $X/A$ and hence every $\pi G_\alpha$ is a neighborhood. If $A$ is not open, let $\Gamma$ be the finite portion of $\Lambda$ that property 1 guarantees exists, i.e. $A\cup \left(\bigcup_{i\in\Gamma} G_{\alpha_i}\right)$ is open in $X$ and each $G_{\alpha_i}$ intersects $A$ nontrivially. This implies that $\bigcup_{i\in\Gamma} \pi G_{\alpha_i}$ is an open neighborhood of $A$ in $X/A$ (since its preimage is $A\cup \left(\bigcup_{i\in\Gamma} G_{\alpha_i}\right)$). 

(ii) Any $x\in X-\overline{A}$ has an open neighborhood $U_x\subset X-\overline{A}$. Notice that $\pi$ is a homeomorphism on $X-\overline{A}$. Thus for any such $x$ and any open cover $W$ of $\pi^{-1}(x) = x$ in $X$, $\pi W$ is a neighborhood of $x$ because the open neighborhood (in $X/A$) $U_x\cap W$ is contained in $\pi W$. 

(iii) If $A$ is closed, then this is trivial so assume that $A$ is not closed and let $x\in \overline{A}-A$. For any open cover $W$ of $\pi^{-1}(x) = x$ in $X$, $\pi^{-1}(\pi W) = W\cup A$, which is open in $X$ by condition 2. Thus $\pi W$ is an open neighborhood of $x$ in $X/A$. 

Therefore, our two conditions ensure that $\pi$ satisfies Day and Kelly's universal quotient map condition. 
\end{proof}

Corollary \ref{DK corollary} now gives us a way to produce more examples of sieves in the canonical topology:

\begin{ex}
Every quotient of a Hausdorff space by a compact subspace is universal. For example, $\pi\colon D^n\to S^n$ (where $S^n = D^n/\partial D^n$) generates a universal colim sieve.
\end{ex}

\begin{ex}
If $A$ is closed, then $S=\langle \{X\to X/A\}\rangle$ is always a colim sieve. Moreover, it is universal if and only if $\partial A$ is compact. For example, this tells us $\langle\{ \mathbb{R}\to\mathbb{R}/[0,\infty) \}\rangle$ is in the canonical topology and reaffirms that $\langle\{\mathbb{R}\to\mathbb{R}/\mathbb{Z} \}\rangle$ is not.
\end{ex}



\section{Universal Colim Sieves in the Category of $R$-modules}\label{UCS in R mod}

The category of $R$-modules does not satisfy the assumptions of Theorem \ref{hinting at basis for can top in special case} or Theorem \ref{basis for can top in special case}.
Indeed, coproducts and pullbacks of $R$-modules do not commute (for example, let $\mathbb{Z}_{(a,b)}$ denote the domain of $\mathbb{Z}\to\mathbb{Z}^2$, $1\mapsto(a,b)$, then we see that $(\mathbb{Z}_{(1,0)}\oplus\mathbb{Z}_{(0,1)})\times_{\mathbb{Z}^2}\mathbb{Z}_{(1,1)}\cong \mathbb{Z}$ but $(\mathbb{Z}_{(1,0)}\times_{\mathbb{Z}^2}\mathbb{Z}_{(1,1)})\oplus (\mathbb{Z}_{(0,1)}\times_{\mathbb{Z}^2}\mathbb{Z}_{(1,1)})\cong 0$).
Thus we do not have basis and presentation results.
Instead, we have some smaller results, reductions and examples.

\begin{note}
Let $R$ be a commutative ring with identity.
We will use \textbf{$R$-Mod} for the category of $R$-modules and \textbf{Ab} for the category of abelian groups.
\end{note}

We start with some basic results.

\begin{cor}\label{sieve with surj is ucs}
Any sieve containing a universal effective epimorphism (e.g. a surjection in \textbf{$R$-Mod} or in \textbf{Sets}) is a universal colim sieve.
\end{cor}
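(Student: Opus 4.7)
The plan is to reduce this to the already-established fact (Corollary \ref{eff epi's gen colim sieves}) that the monogenic sieve generated by a universal effective epimorphism is itself a universal colim sieve, and then invoke the abstract Grothendieck-topology property that any sieve containing a cover is a cover.

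Concretely, suppose $S$ is a sieve on $X$ and $f\colon Y\to X$ is a universal effective epimorphism with $f\in S$. Since $S$ is closed under precomposition, every morphism of the form $f\circ g$ lies in $S$, so the monogenic sieve $\langle f\rangle$ is contained in $S$. By Corollary \ref{eff epi's gen colim sieves}, $\langle f\rangle$ is a universal colim sieve on $X$.

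The remaining step is to show that the collection of universal colim sieves is closed upward under sieve inclusion. This is where I would use Theorem \ref{ucs is a top}, which says that the universal colim sieves assemble into a Grothendieck topology $J$. For any Grothendieck topology, if $T\in J(X)$ and $T\subseteq S$, then $S\in J(X)$: indeed, for each morphism $g\in T$ one has $g\in S$, so $1_{\mathrm{dom}(g)}\in g^*S$, which forces $g^*S$ to be the maximal sieve on $\mathrm{dom}(g)$ and hence to lie in $J(\mathrm{dom}(g))$; the transitivity axiom of $J$ then implies $S\in J(X)$. Applying this with $T=\langle f\rangle$ yields that $S$ is a universal colim sieve.

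I do not anticipate any real obstacle: the statement is essentially a formal corollary of Corollary \ref{eff epi's gen colim sieves} together with Theorem \ref{ucs is a top}. The only thing worth spelling out in the write-up is the short argument that a sieve containing a cover in a Grothendieck topology is itself a cover, via the transitivity axiom applied to the maximal-sieve observation above. The parenthetical remark in the statement (surjections in \textbf{$R$-Mod} or \textbf{Sets}) is then immediate, since surjections of $R$-modules or sets are universal effective epimorphisms in their respective categories.
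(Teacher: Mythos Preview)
Your proposal is correct and matches the paper's own approach exactly: the paper's proof is a single sentence citing Theorem \ref{ucs is a top} and Corollary \ref{eff epi's gen colim sieves}, and your write-up simply unpacks how those two ingredients combine via the upward-closure (transitivity) property of a Grothendieck topology.
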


\begin{proof}
This is an immediate consequence of Theorem \ref{ucs is a top} and Corollary \ref{eff epi's gen colim sieves}.
\end{proof}

\begin{lemma}\label{inj map for 2 gen sieves}
In \textbf{$R$-Mod}, if a sieve $S$ on $X$ can be generated by at most two morphisms, then the canonical map $\ds c\colon \colim_{S}{U} \to X$ is an injection.
\end{lemma}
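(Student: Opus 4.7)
The plan is to combine Proposition \ref{colim is coeq} with the fact that in \textbf{$R$-Mod} a coequalizer of $\phi,\psi\colon M\to N$ is computed as the cokernel $N/\operatorname{im}(\phi-\psi)$, and then verify injectivity by hand.

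First I would reduce to the two-generator case. If $S=\emptyset$, then $\colim_S U$ is the zero module and the claim is trivial; if $S=\langle f_1\colon A_1\to X\rangle$, we may formally adjoin the zero morphism $0\to X$ to the generating set (this produces the same sieve), so assume $S=\langle f_1\colon A_1\to X, f_2\colon A_2\to X\rangle$. Since \textbf{$R$-Mod} is cocomplete with pullbacks, Proposition \ref{colim is coeq} gives
$$\colim_S U \;\cong\; \Coeq\!\left(\coprod_{(i,j)\in\{1,2\}^2} A_i\times_X A_j \;\substack{\pi_1\\ \rightrightarrows\\ \pi_2}\; A_1\oplus A_2\right).$$

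Next I would use that this coequalizer is $(A_1\oplus A_2)/K$, where $K=\operatorname{im}(\pi_1-\pi_2)$. Unpacking, $K$ is the submodule generated by the four families $(a-b,0)$ for $(a,b)\in A_1\times_X A_1$, $(0,a-b)$ for $(a,b)\in A_2\times_X A_2$, $(a,-b)$ for $(a,b)\in A_1\times_X A_2$, and $(-b,a)$ for $(a,b)\in A_2\times_X A_1$. Under this presentation, the canonical map $c\colon (A_1\oplus A_2)/K\to X$ is
$$c\bigl([(a,b)]\bigr)\;=\;f_1(a)+f_2(b),$$
which is easily checked to vanish on each of the four generating families.

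To conclude, suppose $c([(a,b)])=0$, i.e.\ $f_1(a)=-f_2(b)=f_2(-b)$. Then $(a,-b)$ lies in the pullback $A_1\times_X A_2$, and $(\pi_1-\pi_2)(a,-b)=(a,0)-(0,-b)=(a,b)$ belongs to $K$. Hence $[(a,b)]=0$ in $(A_1\oplus A_2)/K$, proving $c$ is injective.

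The only real subtlety is the sign-flip $b\mapsto -b$ that transports the kernel condition $f_1(a)+f_2(b)=0$ into the pullback condition $f_1(a)=f_2(-b)$; this is precisely what makes two generators work and why a naive analogue fails for three or more (a sum $f_1(a_1)+f_2(a_2)+f_3(a_3)=0$ cannot in general be split into pairwise pullback relations), which explains the hypothesis ``at most two.''
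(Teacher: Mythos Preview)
Your proof is correct and follows essentially the same route as the paper: both arguments reduce the one-generator case to the two-generator case by adjoining the zero morphism, use the coequalizer description of $\colim_S U$ to represent an element of the kernel by a pair $(a,b)\in A_1\oplus A_2$ with $f_1(a)+f_2(b)=0$, and then observe that the sign flip $(a,-b)\in A_1\times_X A_2$ witnesses $(a,b)$ lying in the relation submodule. Your write-up is a bit more explicit in unpacking $K=\operatorname{im}(\pi_1-\pi_2)$, but the key idea is identical.
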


\begin{proof}
Suppose $S = \langle \{f\colon Y\to X, g\colon Z\to X\}\rangle$ and $c(x) = 0$. Since every map in $S$ either factors through $f$ or $g$, then $x$, as an element of $\ds\bigoplus_{A\to X\in S} A$, is really an element $(y,z)\in Y\oplus Z$ in the colimit. So $c(x) = 0$ implies that $y+z=0$ in $X$, i.e. $(y,-z)\in Y\times_X Z$. Thus $y\in Y$ gets identified with $-z\in Z$ in the colimit; hence $(y,z) = (0,z-z) = 0$ in the colimit. Therefore, $x=0$ in the colimit and the map $c$ is an injection.

Using the fact that $\langle \{A_i\to X\}_\alpha\rangle = \langle \{A_i\to X\}_\alpha\cup \{Z\xrightarrow{0}X \}\rangle$, we can say that any sieve generated by one morphism is also generated by two morphsims. This completes the proof.
\end{proof}

\begin{prop}\label{surj gen colim sieves}
In \textbf{$R$-Mod}, let 
$$S = \langle \{f\colon Y\to X\}\rangle \qquad \text{and}\qquad T = \langle \{g\colon U\to X, h\colon V\to X\}\rangle$$ 
be sieves on $X$. Then
\begin{enumerate}
\item $S$ is a universal colim sieve if and only if $f$ is a surjection.
\item $T$ is a colim sieve if and only if $g\oplus h\colon U\oplus V\to X$ is a surjection.
\end{enumerate}
\end{prop}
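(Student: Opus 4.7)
The plan is to bootstrap both statements from machinery already assembled in the paper, with only a small amount of new algebraic work on the side.

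For part (1), my approach is to apply Corollary \ref{eff epi's gen colim sieves} directly: the sieve $\langle\{f\}\rangle$ is a universal colim sieve if and only if $f$ is a universal effective epimorphism. The only remaining task is to identify, inside \textbf{$R$-Mod}, the universal effective epimorphisms with the surjections. A surjection coequalizes its kernel pair by the first isomorphism theorem, and conversely every effective epimorphism is in particular an epimorphism in \textbf{$R$-Mod}, hence a surjection. Universality follows from the explicit pullback formula for modules: for a surjection $f\colon Y\to X$ and any $g\colon Z\to X$, each $z\in Z$ lifts to a compatible $(y,z)\in Y\times_X Z$, so the projection $Y\times_X Z\to Z$ is again surjective.

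For part (2), I would apply Proposition \ref{colim is coeq} to write $\colim_T U$ as the coequalizer of a pair of maps with codomain $U\oplus V$. The coequalizer projection $q\colon U\oplus V\twoheadrightarrow\colim_T U$ is surjective, and by construction the canonical map $c\colon\colim_T U\to X$ satisfies $c\circ q=g\oplus h$. Therefore $g\oplus h$ is surjective if and only if $c$ is surjective. Because $T$ has at most two generators, Lemma \ref{inj map for 2 gen sieves} guarantees that $c$ is always injective. Combining the two observations, $c$ is an isomorphism precisely when $g\oplus h$ is surjective, which is exactly the stated equivalence.

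There is essentially no obstacle here; Corollary \ref{eff epi's gen colim sieves}, Proposition \ref{colim is coeq}, and Lemma \ref{inj map for 2 gen sieves} do all of the heavy lifting, and the only genuinely new input is the purely algebraic identification of universal effective epimorphisms in \textbf{$R$-Mod} with surjections. Note also that part (2) only claims $T$ is a colim sieve, not a universal one, so no base-change argument is needed — which is fortunate, since coproducts and pullbacks in \textbf{$R$-Mod} do not commute (as noted at the start of the section), and the cleanest route to a universal statement would otherwise be blocked.
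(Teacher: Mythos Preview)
Your argument is correct. For part (2) you follow essentially the same route as the paper: invoke Lemma \ref{inj map for 2 gen sieves} for injectivity of the canonical map $c$, and observe that surjectivity of $c$ is equivalent to surjectivity of $g\oplus h$; you merely spell out the latter equivalence via Proposition \ref{colim is coeq}, whereas the paper leaves it implicit.

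For part (1) the approaches diverge slightly. The paper does not invoke Corollary \ref{eff epi's gen colim sieves}; instead it reuses Lemma \ref{inj map for 2 gen sieves} (a one-generator sieve is in particular at most a two-generator one) together with Lemma \ref{pb sieve gen set} to reduce the question, for each $k\colon A\to X$, to whether the single generator $\pi_1\colon A\times_X Y\to A$ of $k^\ast S$ is surjective, and then checks directly that this holds for all $k$ if and only if $f$ is surjective. Your route through Corollary \ref{eff epi's gen colim sieves} repackages the same computation in the language of universal effective epimorphisms. Both arguments are short and land on the identical explicit pullback check; the paper's version has the mild advantage of staying entirely within the two lemmas just proved, while yours makes the link to the general effective-epimorphism criterion explicit.
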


\begin{proof}
For part 2, Lemma \ref{inj map for 2 gen sieves} tells us that we only need to worry about the surjectivity of $\ds\colim_{T}{U} \to X$ but this is exactly what the above condition is.

For part 1, Lemma \ref{inj map for 2 gen sieves} and Lemma \ref{pb sieve gen set} tell us that we only need worry about the surjectivity of $A\times_X Y \overset{\pi_1}{\longrightarrow} A$ (the generator of $k^\ast S$) for every map $k\colon A\to X$. But $A\times_X Y = \{(a,y)\in A\times Y\, |\, k(a) = f(y) \}$.
Hence $\pi_1$ is a surjection for every map $k$ if and only if $f$ is a surjection.
\end{proof}

\begin{lemma}\label{universality of surjections}
In \textbf{$R$-Mod}, suppose $S = \langle\{ f_i\colon M_i\to R\}_{i\in I}\rangle$ is a sieve on $R$ such that for every $i\in I$ there exists an $a_i\in R$ with $im(f_i) = a_i R$.
If the ideal $(a_i\, |\, i\in I)$ equals $R$, then for every $R$-module homomorphism $g\colon N\to R$, the natural map $\colim_{g^\ast S}{U} \to N$ is a surjection.
\end{lemma}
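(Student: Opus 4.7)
My plan is as follows. First, apply Lemma \ref{pb sieve gen set} to rewrite $g^\ast S = \langle\{\pi_2\colon M_i\times_R N \to N\}_{i\in I}\rangle$. Since \textbf{$R$-Mod} is cocomplete, Proposition \ref{colim is coeq} presents $\ds\colim_{g^\ast S}{U}$ as a quotient of $\bigoplus_{i\in I}(M_i\times_R N)$, and the natural map to $N$ is induced by the generators $\pi_2$. Consequently, the image of this natural map is exactly the submodule $\sum_{i\in I} \mathrm{im}(\pi_2\colon M_i\times_R N\to N)\subset N$, so it suffices to show this sum equals $N$.

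Second, I would identify each summand concretely. Using the pullback description $M_i\times_R N=\{(m,n)\in M_i\oplus N\,:\, f_i(m)=g(n)\}$ together with the hypothesis $\mathrm{im}(f_i)=a_i R$, one sees immediately that $\mathrm{im}(\pi_2)=g^{-1}(a_i R)$, which is a submodule of $N$. The claim therefore reduces to the purely module-theoretic statement
\[
\sum_{i\in I} g^{-1}(a_i R) \;=\; N.
\]

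Third, I would exploit the Bezout-type hypothesis $(a_i\mid i\in I)=R$ to pick a finite expression $1=\sum_{j=1}^{k} r_j a_{i_j}$ with $r_j\in R$. For any $n\in N$, write
\[
n \;=\; 1\cdot n \;=\; \sum_{j=1}^{k} (r_j a_{i_j})\,n.
\]
Each summand $(r_j a_{i_j})n$ belongs to $g^{-1}(a_{i_j} R)$, since $g\bigl((r_j a_{i_j})n\bigr)=a_{i_j}\bigl(r_j g(n)\bigr)\in a_{i_j}R$. Hence $n\in\sum_i g^{-1}(a_i R)$, proving surjectivity.

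I do not anticipate any serious obstacle: the argument is a standard partition-of-unity trick in disguise. The only point requiring mild care is justifying the identification of the image of $\colim_{g^\ast S}{U}\to N$ with the sum of images of the generators, but this is immediate from the coequalizer presentation of the colimit in \textbf{$R$-Mod} together with Lemma \ref{pb sieve gen set}.
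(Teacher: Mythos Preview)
Your proposal is correct and follows essentially the same route as the paper: reduce via Proposition~\ref{colim is coeq} to showing $\bigoplus_i (M_i\times_R N)\to N$ is onto, then use $1=\sum r_j a_{i_j}$ to decompose an arbitrary $n\in N$ as a sum of elements lying in the images of the individual projections. The only cosmetic difference is that you compute the full image $\mathrm{im}(\pi_2)=g^{-1}(a_iR)$, whereas the paper observes only the weaker (but sufficient) inclusion $a_iN\subset \mathrm{im}(\pi_i)$ before applying the same partition-of-unity step.
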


\begin{proof}
By Proposition \ref{colim is coeq} it suffices to show that 
$\eta\colon\ds\oplus_{i} M_i\times_R N\to N$ is a surjection.
Let $\pi_i\colon M_i\times_R N\to N$ be the natural map.
Fix $x\in N$. Then $a_i g(x)\in a_i R = im(f_i)$ and $a_i g(x)\in im(g)$.
Thus $a_i\cdot x\in im(\pi_i)\subset N$ for all $i\in I$.
Therefore, $x = 1_R\cdot x$ is in $\ds\oplus_{i} im(\pi_i) = im(\eta)$ since R is a unital ring and $(a_i\, |\, i\in I) = R$.
\end{proof}

\begin{prop}\label{good example pf}
Suppose $S = \langle \{ f_1\colon M_1\to R, f_2\colon M_2\to R\} \rangle$ is a sieve on $R$ such that $im(f_i) = a_i R$ for $i = 1,2$.
Then $S$ is in the canonical topology on \textbf{$R$-Mod} if and only if $(a_1,a_2) = R$. 
\end{prop}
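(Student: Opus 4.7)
The plan is to prove the two implications separately, using the reduction machinery established earlier in the section and the two auxiliary lemmas (Lemma \ref{inj map for 2 gen sieves} and Lemma \ref{universality of surjections}) as the technical heart of the argument. Since a universal colim sieve is in particular a colim sieve, the forward direction should be almost immediate from Proposition \ref{surj gen colim sieves}(2): a sieve $S$ in the canonical topology is a colim sieve, so the map $f_1\oplus f_2\colon M_1\oplus M_2\to R$ is a surjection, and its image is precisely $a_1R+a_2R = (a_1,a_2)$. Hence $(a_1,a_2)=R$.

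For the backward direction, assume $(a_1,a_2)=R$. By Theorem \ref{can top is ucs}, it suffices to show $S$ is a universal colim sieve, i.e.\ that $g^\ast S$ is a colim sieve for every $R$-module map $g\colon N\to R$. By Lemma \ref{pb sieve gen set}, $g^\ast S$ is generated by the two pullback maps $M_i\times_R N\to N$ ($i=1,2$), so it is again a sieve with (at most) two generators. Lemma \ref{inj map for 2 gen sieves} then applies to $g^\ast S$ and tells us the canonical map $\colim_{g^\ast S}U\to N$ is injective. Meanwhile Lemma \ref{universality of surjections} (whose hypothesis is exactly $(a_1,a_2)=R$) tells us this same canonical map is surjective. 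A bijective $R$-module homomorphism is an isomorphism, so $g^\ast S$ is a colim sieve.

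The main obstacle to watch out for is the universality condition: it is tempting to argue only that $S$ itself is a colim sieve (by applying the bijectivity argument to $g=\mathrm{id}_R$), which is not enough for membership in the canonical topology. The key observation that makes the argument go through uniformly in $g$ is that being ``two-generated'' is preserved under pullback (via Lemma \ref{pb sieve gen set}), so Lemma \ref{inj map for 2 gen sieves} can be applied to every $g^\ast S$ and not just to $S$. Combined with Lemma \ref{universality of surjections}, which already provides the surjectivity for \emph{every} $g$ at once, this gives universality with essentially no extra work. No additional case analysis on $R$ or on the $a_i$ should be needed; the ideal hypothesis $(a_1,a_2)=R$ is used solely through Lemma \ref{universality of surjections}.
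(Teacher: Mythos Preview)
Your proposal is correct and follows essentially the same route as the paper. The only cosmetic difference is that, for the backward direction, the paper cites Proposition~\ref{surj gen colim sieves}(2) (which already packages the injectivity of Lemma~\ref{inj map for 2 gen sieves} together with the surjectivity check) rather than invoking Lemma~\ref{inj map for 2 gen sieves} separately; the logical content is identical.
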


\begin{proof}
If $S$ is in the canonical topology, then $S$ is a colim sieve and hence by Proposition \ref{surj gen colim sieves}, $a_1R\oplus a_2R = R$. 

If $(a_1,a_2) = R$, then by Proposition \ref{surj gen colim sieves}, $S$ is a colim sieve. The universality of $S$ follows immediately from Lemma \ref{pb sieve gen set}, Proposition \ref{surj gen colim sieves} and Lemma \ref{universality of surjections}.
\end{proof}

Next we include two results that can help us identify when a sieve is not in the canonical topology.

\begin{prop}\label{easy ring lemma for non ucs}
Let $R$ be any nonzero ring. Let $S = \langle\{ f_i\colon A_i\to X\}_{i\in I}\rangle$ be any sieve on $X$ for any nonzero $R$-module $X$. If there exists a nonzero $b\in X$ such that $span_R(b) \subset (X-\cup_I Im(f_i))\cup \{0\}$, then $S$ is not a universal colim sieve.
\end{prop}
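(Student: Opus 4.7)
The plan is to demonstrate failure of the universal colim condition by exhibiting a single pullback $\beta^\ast S$ that fails to be a colim sieve. Since the hypothesis singles out a distinguished element $b\in X$, the natural map to pull back along is the $R$-linear map $\beta\colon R\to X$ determined by $\beta(1) = b$. Guessing this $\beta$ is really the only nontrivial step; everything else is forced by the hypothesis on $b$.

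By Lemma \ref{pb sieve gen set}, $\beta^\ast S = \langle \{\pi_i\colon A_i\times_X R\to R\}_{i\in I}\rangle$. The key step is to pin down $im(\pi_i)$. If $(a,r)\in A_i\times_X R$, then $f_i(a) = \beta(r) = rb$, so $rb$ lies simultaneously in $im(f_i)$ and in $\mathrm{span}_R(b)$; by the hypothesis on $b$ this forces $rb = 0$. Writing $\mathrm{Ann}(b) := \{r\in R\mid rb = 0\}$, we conclude $im(\pi_i)\subseteq \mathrm{Ann}(b)$ for every $i\in I$.

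Finally, every morphism in $\beta^\ast S$ factors through some generator $\pi_i$, so the image of the canonical map $c\colon \colim_{\beta^\ast S}{U}\to R$ is the submodule $\sum_{i\in I} im(\pi_i)$, which is contained in $\mathrm{Ann}(b)$. Since $b\neq 0$ we have $1\cdot b = b\neq 0$, so $1\notin \mathrm{Ann}(b)$ and $\mathrm{Ann}(b)\subsetneq R$. Hence $c$ is not surjective, so $\beta^\ast S$ is not a colim sieve, and therefore $S$ is not a universal colim sieve. The condition $\mathrm{span}_R(b)\subseteq (X - \cup_I im(f_i))\cup\{0\}$ is tailor-made to obstruct surjectivity of $c$ after pulling back along $\beta$, which is why the argument is so short once the right $\beta$ is chosen.
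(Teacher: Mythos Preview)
Your proof is correct and follows essentially the same route as the paper: both define the map $R\to X$, $1\mapsto b$, pull back $S$ along it, and observe that each generator of the pulled-back sieve has image contained in $\mathrm{Ann}(b)=\ker(\beta)\subsetneq R$, so the canonical map to $R$ cannot be surjective. The only cosmetic difference is that the paper identifies the pullback module exactly as $\ker(\beta)\times\ker(f_i)$ (hence $im(\pi_i)=\ker(\beta)$), whereas you record only the inclusion $im(\pi_i)\subseteq\mathrm{Ann}(b)$; the inclusion is all that is needed.
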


\begin{proof}
Suppose such a $b\in X$ exists. Define $g\colon R\to X$ by $1\to b$. Then $Im(g)\cap Im(f_i) = \{0\}$ for all $i$.
Thus for all $i$, the pullback $R\times_X A_i = ker(g)\times ker(f_i)$ and the image of the natural map $R\times_X A_i\to R$ is $ker(g)$.
In particular, $Im\left(\oplus_i R\times_X A_i \to R\right) = ker(g)$, which by construction is not $R$.
Therefore, $\colim_{g^\ast S}U \to R$ is not surjective and so $g^\ast S$ not a colim sieve on $R$.
\end{proof}

\begin{prop}\label{hope?}
Let $R$ be an infinite principal ideal domain.
Let $$S = \langle \{g_i\colon R^n \hookrightarrow R^n\}_{i=1}^M \cup \{f_i\colon R^{m_i}\hookrightarrow R^n\ |\ m_i<n \}_{i=1}^N \rangle$$ be a sieve on $R^n$. If $S$ is a universal colim sieve, then $g_1\oplus\dots\oplus g_M\colon R^{nM}\to R^n$ is a surjection.
\end{prop}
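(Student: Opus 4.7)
I will prove the contrapositive: assuming $G := g_1 \oplus \cdots \oplus g_M$ is not surjective, I will construct a map $\beta \colon R \to R^n$, $1 \mapsto b$, for which $\beta^\ast S$ fails to be a colim sieve. Writing $N_j := g_j(R^n)$ and $L_i := f_i(R^{m_i})$, Lemma \ref{pb sieve gen set} together with the injectivity of the $g_j$'s and $f_i$'s identifies $\beta^\ast S$ as generated by the maps $R \times_{R^n} R^n \to R$ and $R \times_{R^n} R^{m_i} \to R$, whose images in $R$ are the ideals $I_j(b) := \{r \in R : rb \in N_j\}$ and $J_i(b) := \{r \in R : rb \in L_i\}$. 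By Proposition \ref{colim is coeq} it suffices to exhibit a $b$ making $\sum_j I_j(b) + \sum_i J_i(b)$ a proper ideal of $R$.

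First I will select the prime. Since $G$ is not surjective, $N := \sum_j N_j$ is a proper submodule of $R^n$, so $R^n/N$ is a nonzero finitely generated torsion $R$-module; it must then have a nonzero $p$-primary summand for some prime $p \in R$, equivalently the image $\bar N \subset k^n$ (with $k = R/p$) is a proper $k$-subspace. Since $\bar N$ contains every $V_j := \bar N_j$, any $\bar b \in k^n \setminus \bar N$ satisfies $\bar b \notin V_j$ for all $j$; and this alone forces $I_j(b) \subset (p)$ for every lift $b$ of $\bar b$, because $r \notin (p)$ with $rb \in N_j$ would make $\bar r \in k^\times$ and force $\bar b = \bar r^{-1}(\bar r \bar b) \in V_j$.

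It remains to choose the lift $b = b_0 + py$ so that also $b \notin \tilde L_i$ for every $i$, where $\tilde L_i \subset R^n$ denotes the saturation of $L_i$ (a direct summand of rank $m_i < n$, since $R$ is a PID). Once this holds, any nonzero $r$ with $rb \in L_i \subset \tilde L_i$ would force $b \in \tilde L_i$, so $J_i(b) = 0$ and the total ideal is contained in $(p) \subsetneq R$. For each $i$, the ``bad'' set $\{y \in R^n : b_0 + py \in \tilde L_i\}$ is the solution set of a linear system (obtained by projecting to $R^n/\tilde L_i$), hence either empty or a coset of $\tilde L_i$, and is contained in some affine hyperplane $\{\ell_i(y) = c_i\}$ cut out by a nonzero $R$-linear form $\ell_i$. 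Then $\prod_i(\ell_i(y) - c_i)$ is a nonzero polynomial in $n$ variables over $R$, and a standard induction on the number of variables shows that over an infinite integral domain any nonzero polynomial has a non-root in $R^n$; this yields the desired $y$.

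The hard part will be this lifting step: even though $R$ is infinite, the residue field $k = R/p$ may be finite (as for $R = \mathbb{Z}$), so one cannot simply find a single $\bar b \in k^n$ avoiding all $V_j$'s and all $\bar{\tilde L_i}$'s by the usual ``finite union of proper subspaces over an infinite field fails to cover'' argument. The workaround is to separate the two obstructions: the $V_j$-avoidance is absorbed into $k^n$ using the single proper subspace $\bar N$ (this is why I need $\bar N \neq k^n$ rather than merely each $V_j \neq k^n$), while the $\tilde L_i$-avoidance is pushed up into $R^n$, where the hypothesis that $R$ is infinite is genuinely used via the polynomial-non-root argument.
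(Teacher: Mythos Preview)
Your proof is correct and takes a genuinely different route from the paper's. Both arguments prove the contrapositive by producing $b\in R^n$ with (a) the line through $b$ meeting each $L_i$ only at $0$, so all $J_i(b)=0$, and (b) $b$ witnessing that the ideal $\sum_j I_j(b)$ is proper. The paper achieves (b) by putting $G$ in Smith normal form $\mathrm{diag}(d_1,\dots,d_n)$ and taking $b=(x,1)$, so that $b\notin\mathrm{Im}(G)$ since $d_n$ is a nonunit; it then achieves (a) via a separate linear-algebra lemma (Lemma~\ref{nec lin alg}) arguing in the quotient field $F=\mathrm{Frac}(R)$ and splitting the $W_i=V_i\otimes F$ into three cases. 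You instead work at a prime $p$ dividing some invariant factor of $R^n/N$: the single condition $\bar b\notin \bar N$ in $k^n=(R/p)^n$ forces all $I_j(b)\subset(p)$, and then you handle the $\tilde L_i$-avoidance by a polynomial-nonvanishing argument over the infinite domain $R$ itself. This separation---residue field for the $g_j$'s, the base ring for the $f_i$'s---is what lets you sidestep the case analysis of Lemma~\ref{nec lin alg}, and it makes transparent exactly where the ``$R$ infinite'' hypothesis enters (only in finding $y$). The paper's approach, by contrast, packages both obstructions into the single vector $(x,1)$, which is conceptually tidy but requires the somewhat delicate quotient-field lemma. Either method works; yours is arguably more modular.
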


\begin{proof}
Let $G = g_1\oplus\dots\oplus g_M$. Suppose that $G$ is not a surjection. We will produce a map $\phi$ that shows $S$ is not universal.

By a change of basis (which is allowable by Lemma \ref{cs nice with isom}) we may assume that $G = diag(d_1,d_2,\dots,d_n)$ with $d_i|d_{i+1}$. Because $G$ is not surjectve, then $d_n$ is not a unit.
Indeed, if $d_n$ was a unit, then all of the $d_i$'s would also be units and thus $G$ would be surjective.
By Lemma \ref{nec lin alg} below, there exists an $x\in R^{n-1}$ so that $span_R\{(x,1)\}\cap Im(f_i) = \{0\}$ for all $i=1,\dots,N$. Additionally, since $d_n$ is not a unit, then $(x,1)\not\in Im(G)$.

Define $\phi\colon R\to R^n$ by $1\mapsto (x,1)$. We will show that $\phi^\ast S$ is not a colim sieve. First we will simplify the generating set of $\phi^\ast S$.
By the choice of $x$, the pullback module of $R^{m_i}$ along $\phi$ is $\{0\}$ for all $i=1,\dots,N$.
Therefore, we can write $\phi^\ast S$ as $\phi^\ast S = \langle\{\pi_i\colon R^n\times_{R^n} R \to R\}_{i=1}^M\rangle$ where the $\pi_i$ are the pullbacks of the $g_i$ along $\phi$. Since $(x,1)\not\in Im(G)$ and we have the following commutative diagram
\begin{center}
\begin{tikzcd}
\ds\oplus_{i=1}^M R^n_i\times_{R^n} R
\arrow{r}[above]{\oplus_{i=1}^M\pi_i}
\arrow{d} &
R \arrow{d}[right]{\phi} \\
\ds\oplus_{i=1}^M R^n_i \arrow{r}[below]{G} &
R^n
\end{tikzcd}
\end{center}
%
then $1\not\in Im(\pi_1 \oplus \dots \oplus \pi_M)$. Therefore, $\ds\eta\colon \colim_{\phi^\ast S}{U} \to R$ is not surjective; hence $\phi^\ast S$ is not a colim sieve. 
\end{proof}

Lastly, for completeness we include the linear algebra result referenced in Proposition \ref{hope?}. 

\begin{lemma}\label{nec lin alg}
Let $R$ be an infinite principal ideal domain.
For any finite collection $V_1,\dots,V_N$ of submodules of $R^n$ with $\text{dim}(V_i)<n$, there exists an $x\in R^{n-1}$ such that $span_R\{(x,1)\} \cap V_i = \{0\}$ for all $i$.
\end{lemma}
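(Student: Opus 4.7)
The plan is to cut out the bad set corresponding to each $V_i$ by the vanishing of a single polynomial on $R^{n-1}$, and then use that $R$ is an infinite integral domain to avoid finitely many such loci simultaneously.

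First I would, for each $i$, produce a nonzero $R$-linear functional $\ell_i\colon R^n\to R$ with $\ell_i|_{V_i}=0$. Let $K$ be the fraction field of $R$. Since $V_i$ has rank less than $n$, the $K$-subspace $V_i\otimes_R K\subset K^n$ is proper, so it lies in the kernel of some nonzero $K$-linear form $L_i\colon K^n\to K$. Multiplying $L_i$ by a common denominator of its coefficients gives the desired $\ell_i\in \operatorname{Hom}_R(R^n,R)$; it is nonzero and vanishes on $V_i\subset V_i\otimes_R K$.

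Next I would observe that if $\operatorname{span}_R\{(x,1)\}\cap V_i\neq \{0\}$, then there is a nonzero $r\in R$ with $r(x,1)\in V_i$, whence $r\cdot \ell_i(x,1)=\ell_i(r(x,1))=0$; since $R$ is an integral domain and $r\neq 0$, this forces $\ell_i(x,1)=0$. Hence it is enough to find $x\in R^{n-1}$ such that $P_i(x):=\ell_i(x_1,\ldots,x_{n-1},1)$ is nonzero for every $i=1,\dots,N$. Each $P_i$ is an affine polynomial in $R[x_1,\ldots,x_{n-1}]$: if the first $n-1$ coefficients of $\ell_i$ all vanish then $P_i$ is the nonzero constant equal to the last coefficient of $\ell_i$, and otherwise $P_i$ has a nonzero linear part. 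Either way each $P_i$ is nonzero, and because $R$ is a domain the product $P:=P_1\cdots P_N$ is a nonzero element of $R[x_1,\ldots,x_{n-1}]$.

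Finally I would invoke the standard fact that a nonzero polynomial over an infinite integral domain cannot vanish identically on $R^{n-1}$ (a quick induction on the number of variables, using that a nonzero univariate polynomial over a domain has only finitely many roots). This yields $x\in R^{n-1}$ with $P(x)\neq 0$, so $\ell_i(x,1)\neq 0$ for every $i$, and such an $x$ is what the lemma asks for. The main obstacle in this outline is the initial production of the functionals $\ell_i$: once the bad set is realized as the zero locus of an honest polynomial, the rest is routine, and the cleanest way I see to arrange this is to pass to the fraction field as above.
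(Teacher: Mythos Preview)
Your argument is correct and noticeably cleaner than the paper's own proof. Both pass to the fraction field $F$ of $R$ and use that each $W_i=V_i\otimes_R F$ is a proper subspace of $F^n$, but from there the strategies diverge. You dualize: pick a nonzero functional $\ell_i$ annihilating $W_i$, observe that $\operatorname{span}_R\{(x,1)\}\cap V_i\neq\{0\}$ forces $\ell_i(x,1)=0$, and then reduce to the single standard fact that a nonzero polynomial over an infinite integral domain has a nonvanishing point. The paper instead works ``primally'': for each $W_i$ not contained in the hyperplane $F^{n-1}$ it builds an explicit projection $\phi_i\colon W_i\to F^{n-1}$, shows that any $z$ with $(z,1)\in W_i$ must lie in an auxiliary space $\Gamma_i$, and then splits into three cases according to the position of $W_i$ and the dimension of $\Gamma_i$; the first two cases are handled by the usual ``finitely many proper subspaces cannot cover an infinite vector space'' argument, while the third requires a separate numerical adjustment to produce the final $x=\rho y$. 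Your approach avoids this case analysis entirely and uses only the affine-linear nature of $P_i(x)=\ell_i(x,1)$, which makes the infinitude of $R$ do all the work in one stroke. The paper's route is somewhat more explicit about where the obstructing subspaces sit inside $F^{n-1}$, but your version is shorter, uniform, and arguably easier to generalize.
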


\begin{proof}
Let $F$ be the quotient field of $R$.
Let
$$W_i = V_i \otimes_R F.$$
%
We will use $F^{n-1}$ to refer to the subspace $\{(a_1,\dots,a_{n-1},0)\ |\ a_i\in F\}$ in $F^n$.
For each $V_i\not\subset F^{n-1}$, fix an element $\nu_i\in V_i$ such that $\nu_i\not\in F^{n-1}$ and write $\nu_i = (v_{i1},\dots,v_{in})$.
Let $\nu_i^0 = (v_{i1},\dots,v_{i(n-1)}, 0)$.
Lastly, for each $V_i\not\subset F^{n-1}$, define a vector space map $\phi_i\colon W_i\to F^{n-1}$ by $w = (w_1,\dots,w_n)\mapsto w - \frac{w_n}{v_{in}}\nu_i$

Ideally, we will find an $x$ such that $(x,1)\not\in W_i$ for all $i$.
So first, let's see what kinds of $(z,1)$ are in $W_i$ by computing $\phi_i(z,1)$.
\begin{align*}
\phi_i(z,1) &= (z,1) - \frac{1}{v_{in}}\nu_i \\
 &= z - \frac{1}{v_{in}}\nu_i^0
\end{align*}
%
Thus
$$z = \phi_i(z,1) + \frac{1}{v_{in}}\nu_i^0.$$
%
Therefore, if $(z,1)\in W_i$, then $z = \phi_i(z,1) + \frac{1}{v_{in}}\nu_i^0$.
Based on this result, define $\Gamma_i = im(\phi_i)\oplus span_F\{\nu_i^0\}$.
So $(z,1)\in W_i$ implies $z\in \Gamma_i$.

For each index $i$ exactly one of the following is true:
\begin{enumerate}
\item $W_i\subset F^{n-1}$,
\item $W_i\not\subset F^{n-1}$ and $\text{dim}_F(\Gamma_i) < n-1$,
\item $W_i\not\subset F^{n-1}$ and $\Gamma_i = F^{n-1}$.
\end{enumerate}

For every index $j$ in collection 1, every $x\in R^{n-1}$ satisfies the equation $span_R\{(x,1)\} \cap V_j = \{0\}$.
Thus when picking our $x$, we only need to consider the indices in collections 2 and 3.

For each index $i$ in collection 2, $\Gamma_i$ is a proper subspace of $F^{n-1}$.
Since there are only finitely many $\Gamma_i$ and $F$ is an infinite field, then there exists a $y = (y_1, \dots, y_{n-1})$ such that $y\neq 0$ and $span_F\{(y,0)\}\cap \Gamma_i = \{0\}$ for all $i$ in collection 2.
By multiplying $y$ by an appropriate $s\in F$ we can clear denominators and so we may assume that $y\in R^{n-1}$.
In particular, for all $r\in R$, $ry\not\in\Gamma_i$, which implies that $(ry,1)\not\in W_i$.
Therefore, for all $r\in R$, $span_R\{(ry,1)\}\cap V_i = \{0\}$ for all indices in collection 2.

Continuing with the $y$ from the previous paragraph, we now consider the indices $k$ in collection 3 and their corresponding $\Gamma_k$.
In this situation, $(y,0)\in \Gamma_k$, i.e. $y = \phi_k(z) + u_k\nu_k^0$ for some $z\in W_k$ and $u_k\in F$.
Since $R$ is an infinite ring and collection 3 contains finitely many indices $k$, we can pick a nonzero $\rho\in R$ such that for all $k$, $\rho u_k\in R$ and $\rho u_k \neq \frac{1}{v_{kn}}$.
Thus $\rho y \neq \phi_k(a) + \frac{1}{v_{kn}}\nu_k^0$ for any $a\in W_k$, which implies that $(\rho y,1)\not\in W_k$.
Therefore, $span_R\{(\rho y,1)\}\cap V_k = \{0\}$ for all indices in collection 3.

We can take $x = \rho y$.

\end{proof}

\noindent\textsc{Examples}
\bigskip

Here we include a few examples and non-examples of sieves in the canonical topology for various rings $R$.

\begin{ex}
In the category of $R$-modules 
every surjective map generates a universal colim sieve (see
Proposition \ref{surj gen colim sieves}). As more specific examples, the sieve $\langle\{ \mathbb{Z}\overset{\pi}{\longrightarrow} \mathbb{Z}/n\mathbb{Z}\ |\ 1\mapsto 1\}\rangle$ is in the canonical topology on \textbf{Ab} and in \textbf{$R$-Mod}, the sieve $\langle\{R^n\to R\ |\ (a_1,\dots,a_n)\mapsto a_1\}\rangle$ is in the canonical topology. 
\end{ex}

\begin{ex}
By Proposition \ref{good example pf}, $\langle\{R\overset{a}{\longrightarrow} R, R\overset{b}{\longrightarrow} R\}\rangle$ is in the canonical topology if and only if $(a,b) = R$. As more specific examples, in \textbf{Ab} the sieve $\langle\{\mathbb{Z}\overset{2}{\longrightarrow} \mathbb{Z}, \mathbb{Z}\overset{3}{\longrightarrow} \mathbb{Z}\}\rangle$ is in the canonical topology; and when the function $\cdot g(x)\colon C^\infty(\mathbb{R})\to C^\infty(\mathbb{R})$ is the map $f(x)\mapsto (g\cdot f)(x)$, then the sieve $\langle\{C^\infty(\mathbb{R})\overset{\cdot x}{\longrightarrow} C^\infty(\mathbb{R}), C^\infty(\mathbb{R})\overset{\cdot \sin(x)}{\longrightarrow} C^\infty(\mathbb{R})\}\rangle$ is not in the canonical topology on $C^\infty(\mathbb{R})$-modules.
\end{ex}

\begin{ex}\label{good counterex}
The sieve $S = \langle\{R\overset{i_1}{\to} R^2, R\overset{i_2}{\to} R^2 \}\rangle$ where $i_1(1) = (1,0)$ and $i_2(1) = (0,1)$ 
(in the category of $R$-modules for nontrivial $R$) is not in the canonical topology.
By Proposition \ref{surj gen colim sieves}, $S$ is clearly a colim sieve so to see that $S$ is not universal consider the map $\Delta\colon R\to R^2$, $1\mapsto (1,1)$.
Then for $k=1,2$, $i_k$ pulled back along $\Delta$ yields the zero map $z\colon 0\to R$. 
Hence Lemma \ref{pb sieve gen set} says $\Delta^\ast S = \langle\{z\colon 0\to R\}\rangle$, which is clearly not a colim sieve. 

Similarly  $\langle\{R\overset{i_k}{\to} R^n\ |\ k = 1,\dots,n\}\rangle$ is a colim sieve but is not in the canonical topology. (This is also a consequence of Proposition \ref{easy ring lemma for non ucs}.)
\end{ex}

\begin{ex}
Let $S = \langle\{ f_k\colon \mathbb{Q}\to\mathbb{Q}[t]\ |\ f_k(1) = 1+t+\dots+t^k \}_{k=1}^\infty\rangle$ in the category of rational vector spaces. This $S$ is not in the canonical topology.
(This is a direct consequence of Proposition \ref{easy ring lemma for non ucs} using $b = t$.)
\end{ex}

\begin{ex}
Let $F$ be an infinite field. In the category of $F$ vector spaces, a sieve of the form $S = \langle\{ F^{m_i}\hookrightarrow F^n\ |\ m_i\leq n \}_{i=1}^M\rangle$ is in the canonical topology if and only if $m_i = n$ for some $i$ if and only if $S$ contains an isomorphism. (This is a consequence of 
Proposition \ref{hope?}.)
\end{ex}

\begin{prop}
Consider the diagram $B_1\hookrightarrow B_2\hookrightarrow B_3\hookrightarrow \dots$ made with only injective maps and the direct limit $B\coloneqq \colim B_n$ in \textbf{$R$-mod}. Let the maps $\iota_n\colon B_n\to B$ be the natural maps into the colimit.
Then the sieve $\langle\{\iota_n \,|\, n\in\mathbb{N}\}\rangle$ is a universal colim sieve.
\end{prop}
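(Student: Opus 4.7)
The plan is to verify directly that $\colim_S U \cong B$, making $S$ a colim sieve, and then to use exactness of filtered colimits in \textbf{$R$-Mod} to handle universality. Throughout I will use the standard fact that since each $B_n \hookrightarrow B_{n+1}$ is injective, each canonical map $\iota_n \colon B_n \to B$ is injective and $B = \bigcup_n \iota_n(B_n)$.

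For the colim-sieve condition, I would verify the universal property directly rather than invoke Proposition \ref{colim is coeq}. Given any cone $\{\phi_f \colon U(f) \to X\}_{f \in S}$, the components $\phi_{\iota_n} \colon B_n \to X$ satisfy $\phi_{\iota_{n+1}} \circ (B_n \hookrightarrow B_{n+1}) = \phi_{\iota_n}$, because each inclusion $B_n \hookrightarrow B_{n+1}$ is a morphism from $\iota_n$ to $\iota_{n+1}$ in $S$ (viewed as a subcategory of $R\text{-Mod}\downarrow B$). Since $B = \colim_n B_n$, there is a unique $\psi \colon B \to X$ with $\psi \circ \iota_n = \phi_{\iota_n}$ for every $n$. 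For any $f \colon A \to B$ in $S$, the generating property yields a factorization $f = \iota_n \circ g$, and $g$ is itself a morphism in $S$ from $f$ to $\iota_n$, so $\phi_f = \phi_{\iota_n} \circ g = \psi \circ \iota_n \circ g = \psi \circ f$. Uniqueness of $\psi$ is automatic. Hence $\colim_S U \cong B$ canonically, and $S$ is a colim sieve.

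For universality, fix $\alpha \colon Y \to B$. By Lemma \ref{pb sieve gen set}, $\alpha^\ast S = \langle \{\pi_n \colon B_n \times_B Y \to Y\}_{n \in \mathbb{N}} \rangle$, with transition maps $B_n \times_B Y \to B_{n+1} \times_B Y$ induced by the inclusions $B_n \hookrightarrow B_{n+1}$; a quick kernel computation shows each $\pi_n$ and each transition map is injective. Since filtered colimits in \textbf{$R$-Mod} are exact and in particular commute with finite limits,
$$\colim_n (B_n \times_B Y) \cong (\colim_n B_n) \times_B Y \cong B \times_B Y \cong Y.$$
Thus $\{B_n \times_B Y \hookrightarrow Y\}_n$ is a sequence of injections exhibiting $Y$ as its direct limit, in exact parallel with the original data. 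Applying the argument of the previous paragraph to this sequence (with $B$ replaced by $Y$ and $B_n$ by $B_n \times_B Y$) shows that $\alpha^\ast S$ is a colim sieve, completing the proof that $S$ is universal.

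The main (and essentially only) obstacle is the interchange $\colim_n (B_n \times_B Y) \cong Y$; this relies crucially on filtered colimits of $R$-modules being exact, which is what lets the pullback sieve keep the same directed-colimit-of-injections shape as the original. Everything else amounts to routine naturality together with the observation that each morphism in $S$ factors through a generator, so the universal property only needs to be checked on the subdiagram of generators.
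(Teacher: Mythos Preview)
Your proof is correct and follows the same strategy as the paper: reduce $\colim_S U$ to the directed colimit over the generating chain, then show that pulling back along any $\alpha\colon Y\to B$ reproduces a chain of injections of the same form so that the first step applies again. The paper phrases the first step via a final functor $\mathbb{N}\to S$ (using $B_i\times_B B_j\cong B_{\min(i,j)}$) rather than a direct universal-property check, and leaves the identification $\colim_n(B_n\times_B Y)\cong Y$ as an ``easy exercise'' where you invoke exactness of filtered colimits, but these are only stylistic differences.
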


\begin{proof}
Let $\Gamma\colon\mathbb{N}\to S$ by $n\mapsto\iota_n$.
Notice that $\Gamma$ is a final functor; this is easy to see since the injectivity of $\iota_n$ and the maps in our diagram imply that $B_i\times_B B_j\cong B_{min(i,j)}$.
Thus $\colim_{S}U$ exists and $\colim_{S}U\cong\colim_{\mathbb{N}}U\Gamma \cong B$.
Therefore, $S$ is a colim sieve.

To see that $S$ is universal, let $f\colon X\to B$ and set $X_i \coloneqq X\times_B B_i$.
For each $n\in\mathbb{N}$, $\iota_n$ and $B_n\to B_{n+1}$ are both injective maps; this implies that the natural maps $X_n\to X_{n+1}$ and $X_n\to X$ are also injective maps since the pullback of an injection in \textbf{$R$-Mod} is an injection and $X_i\cong X_{i+1}\times_{B_{i+1}}B_i$.
Additionally, it is an easy exercise to see that the direct limit $\colim X_i$ is isomorphic to $X$.
In other words, $f^\ast S$ is the type of sieve described in the assumptions of this proposition and proved to be a colim sieve in the previous paragraph.
\end{proof}

\begin{ex}
Take $B_n = \mathbb{R}^n$ and let $B_n\to B_{n+1}$ be the inclusion map $(x_1,\dots,x_n)\mapsto(x_1,\dots,x_n,0)$.
Use $\mathbb{R}^\infty$ to denote the direct limit.
Then the above proposition shows that $\langle\{\mathbb{R}^n\hookrightarrow\mathbb{R}^\infty\}_{n\in\mathbb{N}} \rangle$ is in the canonical topology on the category of $\mathbb{R}$ vector spaces.
(Compare this to Example \ref{direct limit top example}.)
\end{ex}

\bigskip

\noindent\textsc{Reductions}
\bigskip

In this part we prove some reductions that allow us to limit our view (of sieve generating sets and the maps universality must be checked over) to the non-full subcategory of free modules with injective maps when $R$ is `nice.'
The first reduction will be reducing the types of sieves we need to look at:

\begin{prop}[Reduction 1]\label{reduction 1}
In \textbf{$R$-Mod}, let $S$ be a sieve on $X$.
Then the following are equivalent
\begin{enumerate}
\item $S$ is a universal colim sieve
\item $f^\ast S$ is a universal colim sieve for every surjection $f\colon Y\to X$
\item $f^\ast S$ is a universal colim sieve for some surjection $f\colon Y\to X$
\end{enumerate}
\end{prop}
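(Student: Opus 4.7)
The plan is to prove the chain $(1)\Rightarrow(2)\Rightarrow(3)\Rightarrow(1)$, with only the last implication carrying real content. For $(1)\Rightarrow(2)$: Theorem \ref{ucs is a top} tells us that the universal colim sieves form a Grothendieck topology, whose stability axiom asserts that the pullback of any covering sieve along any morphism is again covering; thus $f^\ast S$ is a universal colim sieve for every morphism $f$, surjection or not. For $(2)\Rightarrow(3)$: take $f = \mathrm{id}_X$, which is a surjection satisfying $\mathrm{id}_X^\ast S = S$.

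For $(3)\Rightarrow(1)$, the strategy is to apply the transitivity axiom of the Grothendieck topology of Theorem \ref{ucs is a top}. First I would record that every surjection in \textbf{$R$-Mod} is a universal effective epimorphism: any surjection of $R$-modules is the coequalizer of its kernel pair, hence an effective epimorphism, and the pullback of a surjection along any morphism of $R$-modules is again a surjection, hence an effective epimorphism by the same reasoning. By Corollary \ref{eff epi's gen colim sieves} the monogenic sieve $R_f := \langle\{f\}\rangle$ is therefore a universal colim sieve on $X$.

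Next I would verify the transitivity hypothesis: for every $(g\colon Z\to X)\in R_f$, the sieve $g^\ast S$ is a universal colim sieve on $Z$. Since $g\in R_f$, there is a factorization $g = f\circ g'$ for some $g'\colon Z\to Y$, so $g^\ast S = (g')^\ast(f^\ast S)$. By hypothesis $f^\ast S$ is a universal colim sieve, and by stability (again part of Theorem \ref{ucs is a top}) its pullback along $g'$ is again a universal colim sieve. Transitivity then forces $S$ itself to be a universal colim sieve on $X$, which is (1).

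The main obstacle is the verification that surjections in \textbf{$R$-Mod} are universal effective epimorphisms; once that standard fact is in hand, everything else is formal manipulation of the Grothendieck-topology axioms, and the same template would prove the analogous reduction in any category whose surjections (or any distinguished class of morphisms playing this role) are universal effective epimorphisms.
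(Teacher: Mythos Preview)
Your proof is correct and follows essentially the same route as the paper: the key step $(3)\Rightarrow(1)$ is handled by observing that the monogenic sieve $\langle\{f\}\rangle$ is a universal colim sieve (you via Corollary~\ref{eff epi's gen colim sieves}, the paper via Proposition~\ref{surj gen colim sieves}), then factoring any $g\in\langle\{f\}\rangle$ through $f$ and invoking the transitivity axiom of Theorem~\ref{ucs is a top}.
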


\begin{proof}
It is obvious that 1 implies 2 and 2 implies 3, so
it suffices to show 3 implies 1.

Assume $f^\ast S$ is a universal colim sieve for some fixed surjection $f\colon Y\to X$. Set $T = \langle \{f\colon Y\to X\}\rangle$. By Proposition \ref{surj gen colim sieves}, $T$ is a universal colim sieve since $f$ is a surjection.
We will now use $T$ together with the Grothendieck topology's transitivity axiom to show that $S$ is a universal colim sieve. Notice that $S$ satisfies the hypotheses of this axiom with respect to $T$.
Indeed, since every $g\in T$ factors as $f\circ k$ for some $k$, 
then $g^\ast S = (fk)^\ast S = k^\ast(f^\ast S)$, which implies that $g^\ast S$ is a universal colim sieve (as $f^\ast S$ is universal) for every $g\in T$. Therefore, by the transitivity axiom of a Grothendieck topology, $S$ is a universal colim sieve. 
\end{proof}

To rephrase our first reduction: $S$ is a universal colim sieve on $X$ if and only if $f^\ast S$ is a universal colim on $R^n$ where $f\colon R^n\to X$ is a surjection (note that $n$ is not necessarily assumed to be finite).
This reduction means that we can restrict our view to free modules (not necessarily finitely generated).
Specifically, we only need to look at sieves on free modules and check the universality condition on free modules.
Indeed, $S$ is a universal colim sieve on $X$ if and only if for all $g\colon Y\to X$, $g^\ast S$ is a universal colim sieve on $Y$ if and only if for all $g\colon Y\to X$, $(gf)^\ast S$ is a universal colim sieve on $R^n$ for some surjection $f\colon R^n\to Y$.

\begin{prop}[Reduction 2]\label{reduction 2}
In $R$\textbf{-Mod} when $R$ is a principal ideal domain, every sieve on $R^n$ equals a sieve of the form
\begin{equation*}
\langle \{g_i\colon R^{m_i}\hookrightarrow R^n\colon m_i\leq n\}_{i\in I}\rangle
\end{equation*}
%
where the $g_i$ are injections.
\end{prop}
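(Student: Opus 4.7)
The plan is, for each $f\in S$, to produce an injection $g_f\colon R^{m_f}\hookrightarrow R^n$ with $m_f \le n$ satisfying both $g_f \in S$ and $f = g_f\circ h$ for some $h\colon A\to R^{m_f}$. Once this is established, the collection $\{g_f\}_{f\in S}$ sits inside $S$ and every morphism of $S$ factors through one of its members, so $S = \langle\{g_f\}_{f\in S}\rangle$ is a sieve of the asserted form.

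Given $f\colon A\to R^n$ in $S$, I would let $M = \operatorname{im}(f)\subset R^n$. Since $R$ is a PID, every submodule of the free module $R^n$ is itself free of rank at most $n$, so $M\cong R^m$ for some $m\le n$. Fixing an isomorphism $\varphi\colon R^m \xrightarrow{\cong} M$ and writing $\iota\colon M\hookrightarrow R^n$ for the inclusion, I would take $g := \iota\circ\varphi$ as the candidate injection. If $\pi\colon A\twoheadrightarrow M$ denotes the corestriction of $f$, then $f = \iota\circ\pi$ and $h := \varphi^{-1}\circ\pi$ gives $g\circ h = f$, so $f$ factors through $g$ automatically.

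The crucial step is showing that $g$ itself lies in $S$. For this I would use that $R^m$, being free, is projective; hence the surjection $\varphi^{-1}\circ\pi\colon A\twoheadrightarrow R^m$ admits a section $s\colon R^m\to A$. Then $f\circ s = \iota\circ\pi\circ s = \iota\circ\varphi = g$, and since sieves are closed under precomposition, $g = f\circ s \in S$. Setting $g_f := g$ completes the construction. The two algebraic inputs being invoked are both standard PID facts: submodules of free modules are free of rank at most the original, and free modules are projective. No further obstacles are expected, though one should note that the zero morphism and the empty sieve are handled correctly by allowing $m_f = 0$ (i.e.\ $R^0 = 0$) and by taking the empty index set, respectively.
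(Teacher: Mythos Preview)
Your proof is correct and follows essentially the same approach as the paper: replace each map by the inclusion of its image (free of rank $\le n$ since $R$ is a PID), and use projectivity of that free image to split the corestriction, exhibiting the inclusion as a precomposition of the original map and hence a member of $S$. The only cosmetic difference is that the paper starts from a chosen generating set $\{f_i\}$ and builds the new sieve from the images of those generators, whereas you index over all of $S$; the underlying argument is identical.
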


\begin{proof}
Let $S = \langle\{f_i\colon A_i\to R^n\}_{i\in I}\rangle$ be a sieve on $R^n$. Set $$T = \langle\{g_i\colon Im(f_i)\to R^n\}_{i\in I}\rangle$$ where the $g_i$'s are inclusion maps.
Since $R$ is a PID and $Im(f_i)$ is a submodule of $R^n$, then $Im(f_i)\cong R^{m_i}$ for some $m_i\leq n$.
Thus $T$ is of the desired form and we will show that $S=T$. First notice that $S\subset T$. To get that $T$ is a subcollection of $S$, notice that $\tilde{f}_i\colon A_i\to Im(f_i)$ (i.e. $f_i$ with a different codomain) is split because $\tilde{f}_i$ is a surjective map onto a projective module; call the splitting $\chi_i$. Hence $g_i = g_i\circ \tilde{f}_i\circ\chi_i = f_i\circ\chi_i$ implies that $T\subset S$ and completes the proof.
\end{proof}

To rephrase our second reduction: when talking about sieves on $R^n$, we only need to talk about sieves generated by injections of free modules. Thus we can restrict our view of sieve generating sets to the non-full subcategory of free modules with injective morphisms.

Our next reduction will also assume $R$ is a principal ideal domain. In particular, fix $n$ and a map $f\colon X\to R^n$ for some $R$-module $X$. Then
since $R$ is a PID, we may write
$$
X\cong R^m\oplus K \quad \text{for some $m\leq n$, where}
$$$$
R^m\cong Im(f), \quad
K = ker(f), \quad
f = g+z \quad \text{with}
$$$$
g\colon R^m\to R^n \text{ an injection and } z\colon K\to R^n \text{ the zero map}.
$$

\begin{prop}[Reduction 3]\label{reduction 3}
Let $R$ be a principal ideal domain, $S$ be a sieve on $R^n$ in \textbf{$R$-Mod} and $f\colon X\to R^n$.
Then, using the set-up described in the previous paragraph,
$$\colim_{f^\ast S}{U} \cong \left(\colim_{g^\ast S}{U}\right) \oplus \left(\colim_{z^\ast S}{U}\right).$$
Moreover, $z^\ast S$ is a universal colim sieve; hence $f^\ast S$ is a colim sieve if and only if $g^\ast S$ is a colim sieve.
\end{prop}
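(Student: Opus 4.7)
My plan is to prove the decomposition by expressing both colimits as coequalizers via Proposition \ref{colim is coeq} and observing that the coequalizer diagram for $\colim_{f^\ast S} U$ splits as a direct sum. Separately, the sieve $z^\ast S$ turns out to be the maximal sieve on $K$, which handles the second factor and the universality claim. The first step is to establish this: since $z$ is the zero map, every morphism $\beta\colon B\to K$ satisfies $z\circ\beta = 0_{B,R^n}$, and assuming $S$ is nonempty (the empty case being vacuous), picking any $\alpha\colon A\to R^n\in S$ shows that $0_{B,R^n}=\alpha\circ 0_{B,A}$ lies in $S$. Hence $\beta\in z^\ast S$, so $z^\ast S$ is the maximal sieve on $K$. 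The maximal sieve contains $\mathrm{id}_K$, so $\colim_{z^\ast S} U\cong K$, and its pullback along any morphism is again maximal, so $z^\ast S$ is a universal colim sieve.

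For the main isomorphism, I would use Proposition \ref{colim is coeq} with a generating collection $\{\alpha_i\colon A_i\to R^n\}_{i\in I}$ for $S$, so that by Lemma \ref{pb sieve gen set}, $f^\ast S$ is generated by $\{A_i\times_{R^n} X\to X\}_{i\in I}$ and $g^\ast S$ by $\{A_i\times_{R^n} R^m\to R^m\}_{i\in I}$. Since $f = g\circ\pi$ with $\pi\colon R^m\oplus K\to R^m$ the projection onto the first factor, one computes
$$A_i\times_{R^n} X\cong (A_i\times_{R^n}R^m)\oplus K$$
and
$$(A_i\times_{R^n}X)\times_X(A_j\times_{R^n}X)\cong\bigl((A_i\times_{R^n}R^m)\times_{R^m}(A_j\times_{R^n}R^m)\bigr)\oplus K.$$
Under these isomorphisms, the two coequalizer arrows for $\colim_{f^\ast S}U$ respect the direct sum splitting: on the pullback-over-$R^m$ factor they are exactly the arrows computing $\colim_{g^\ast S}U$, while on the $K$ factor each is the identity map into the appropriate summand. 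Since finite direct sums commute with coequalizers in an additive cocomplete category, this yields
$$\colim_{f^\ast S} U\cong \colim_{g^\ast S} U\oplus K\cong\colim_{g^\ast S}U\oplus\colim_{z^\ast S}U.$$

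Finally, the natural map $\colim_{f^\ast S} U\to X = R^m\oplus K$ decomposes under this splitting as the direct sum of the natural map $\colim_{g^\ast S} U\to R^m$ and the identity on $K$, so it is an isomorphism precisely when $g^\ast S$ is a colim sieve. The main obstacle is the bookkeeping in the second paragraph: verifying that the pullback identifications are natural enough for the two coequalizer arrows to preserve the direct-sum splitting. Once this splitting is confirmed, the rest follows from Proposition \ref{colim is coeq} and the fact that coequalizers in \textbf{$R$-Mod} distribute over direct sums.
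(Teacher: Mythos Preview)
Your proof is correct and follows the same overall strategy as the paper: express $\colim_{f^\ast S} U$ via the coequalizer formula of Proposition~\ref{colim is coeq} and split the resulting diagram as a direct sum of an $R^m$-part and a $K$-part. The execution differs in two places. The paper first invokes Reduction~2 to assume the generators $\eta_i$ of $S$ are injections, then decomposes $R^{p_i}\times_{R^n} X \cong (R^{p_i}\times_{R^n} R^m)\oplus(R^{p_i}\times_{R^n} K)$, using injectivity of $\eta_i$ to identify the second summand with $K$; you instead observe directly that $A_i\times_{R^n} X \cong (A_i\times_{R^n} R^m)\oplus K$ for \emph{arbitrary} generators, simply because $f$ factors through the projection $X\to R^m$. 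Similarly, your argument that $z^\ast S$ is the maximal sieve (the zero map $B\to R^n$ factors through any element of $S$) is more elementary than the paper's, which again relies on injectivity of the $\eta_i$ to see that the pulled-back generator is $\mathrm{id}_K$. Both routes land in the same place; yours is slightly more streamlined in that it bypasses Reduction~2 entirely.
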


\begin{proof}[Sketch of Proof.]
By Proposition \ref{reduction 2}, we may assume that $S$ can be written in the form $S = \langle \{\eta_i\colon R^{p_i}\hookrightarrow R^n\colon p_i\leq n\}_{i\in I}\rangle$.
Consider the diagrams $\EuScript{X}$, $\EuScript{R}$ and $\EuScript{K}$ defined as:
\begin{center}
$\EuScript{X} = \left(\begin{tikzcd}
\bigoplus_{i\in I}(R^{p_i} \times_{R^n} X)\times_X (R^{p_i} \times_{R^n} X)
\arrow[d, shift left = 2] \arrow[d, shift right = 2] \\
\bigoplus_{i\in I}(R^{p_i} \times_{R^n} X)
\end{tikzcd}\right)$, \\
$\EuScript{R} = \left(\begin{tikzcd}
\bigoplus_{i\in I}(R^{p_i} \times_{R^n} R^m)\times_{R^m} (R^{p_i} \times_{R^n} R^m)
\arrow[d, shift left = 2] \arrow[d, shift right = 2] \\
\bigoplus_{i\in I}(R^{p_i} \times_{R^n} R^m)
\end{tikzcd}\right), \text{ and }$ \\
$\EuScript{K} = \left(\begin{tikzcd}
\bigoplus_{i\in I}(R^{p_i} \times_{R^n} K)\times_K (R^{p_i} \times_{R^n} K)
\arrow[d, shift left = 2] \arrow[d, shift right = 2]\\
\bigoplus_{i\in I}(R^{p_i} \times_{R^n} K)
\end{tikzcd}\right)$
\end{center}

First we look at the objects of $\EuScript{X}$.
Since each $\eta_i$ is injective, then for all $i$
$$R^{p_i} \times_{R^n} X \cong (R^{p_i} \times_{R^n} R^m)\oplus (R^{p_i} \times_{R^n} K)$$
%
and for all $i$, $q$
\begin{align*}
(R^{p_i} &\times_{R^n} X) \times_X (R^{p_q}\times_{R^n} X) \\
   &\cong ((R^{p_i}\times_{R^n} R^m)\times_{R^m}(R^{p_q}\times_{R^n} R^m))\oplus ((R^{p_i}\times_{R^n} K)\times_K(R^{p_q}\times_{R^n} K)).
\end{align*}
%
In other words, $\EuScript{X} \cong \EuScript{R}\oplus \EuScript{K}$.
But since colimits ``commute'' with colimits, then $\Coeq(\EuScript{X}) \cong \Coeq(\EuScript{R})\oplus \Coeq(\EuScript{K})$. Now by Lemma \ref{pb sieve gen set} and Proposition \ref{colim is coeq}, the first part has been proven, i.e.
$$\colim_{f^\ast S}{U} \cong \left(\colim_{g^\ast S}{U}\right) \oplus \left(\colim_{z^\ast S}{U}\right).$$

Next we notice that $z^\ast S$ is a universal colim sieve.
Indeed, since $\eta_i$ is an injection and $z$ is the zero map, it easily follows that $z^\ast S = \langle \{id\colon K\to K\}\rangle$.

To complete the proof, notice that we have the following commutative diagram
\begin{center}
\begin{tikzcd}
\Coeq(\EuScript{X}) \cong \Coeq(\EuScript{R}) \oplus \Coeq(\EuScript{K})
\arrow[d, shift right = 1, "\rho"] \arrow[d, bend right = 25, shift right = 14, "\chi"] \arrow[d, bend left = 25, shift left = 14, "\kappa"] \\
X \cong R^m \oplus K
\end{tikzcd}
\end{center}
%
where the vertical maps are the obvious canonical maps.
This $\chi = \rho\oplus\kappa$ is an isomorphism if and only if both $\rho$ and $\kappa$ are isomorphisms.
We have already shown that $\kappa$ is an isomorphism (as $z^\ast S$ is a universal colim sieve), thus this diagram implies that $\chi$ is an isomorphism if and only if $\rho$ is; hence $f^\ast S$ is colim sieve if and only if $g^\ast S$ is a colim sieve.

\end{proof}

Lastly, we rephrase our third reduction:

\begin{cor}
When $R$ is a PID, a sieve on $R^n$ is a universal colim sieve if and only if $f^\ast S$ is a colim sieve for every injection $f\colon R^m\to R^n$.
\end{cor}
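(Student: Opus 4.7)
The plan is to deduce this corollary directly from Reduction~3 (Proposition~\ref{reduction 3}). The forward implication is immediate: if $S$ is a universal colim sieve then, by definition, $f^\ast S$ is a colim sieve for every map $f$ into $R^n$, and in particular for every injection $f\colon R^m\hookrightarrow R^n$.

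For the converse, I would suppose $f^\ast S$ is a colim sieve for every injection $f\colon R^m\hookrightarrow R^n$, and then show that $h^\ast S$ is a colim sieve for an arbitrary $R$-module homomorphism $h\colon Y\to R^n$. Since $R$ is a PID, $\operatorname{Im}(h)\subseteq R^n$ is free of some rank $m\le n$, so the projectivity of $\operatorname{Im}(h)$ splits the surjection $Y\twoheadrightarrow\operatorname{Im}(h)$ and furnishes a decomposition $Y\cong R^m\oplus K$ with $K=\ker h$. Under this decomposition, $h=g+z$ exactly as in the setup preceding Reduction~3, where $g\colon R^m\hookrightarrow R^n$ is the inclusion of the image and $z\colon K\to R^n$ is the zero map. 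Reduction~3 then yields that $h^\ast S$ is a colim sieve if and only if $g^\ast S$ is, and $g^\ast S$ is a colim sieve by hypothesis.

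There is no serious technical obstacle here, as all of the work is already packaged inside Reduction~3. The only step requiring attention is confirming that the injection $g$ produced by Reduction~3 has the form $R^m\hookrightarrow R^n$ with $m\le n$ assumed in the hypothesis; this is automatic, since $g$ is by construction the inclusion of a submodule of $R^n$, which is free of rank at most $n$ because $R$ is a PID.
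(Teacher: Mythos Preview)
Your proposal is correct and matches the paper's intent: the corollary is presented in the paper without proof, as a direct rephrasing of Reduction~3, and your argument spells out exactly the deduction the paper leaves implicit. The only point worth noting is that your hypothesis also covers the identity map $R^n\hookrightarrow R^n$, so $S$ itself is a colim sieve, which is needed for the forward direction to be nonvacuous; otherwise there is nothing to add.
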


All together our reductions basically allow us to work in the subcategory of free modules with injective morphisms instead of in $R$\textbf{-Mod}.

\subsection{The Category of Abelian Groups}\label{cat of abgr}

This section will be primarily made up of examples. Additionally, we include a characterization of sieves on $\mathbb{Z}$ and one result for sieves on larger free abelian groups.

\begin{ex}\label{rel prime ex}
By Corollary \ref{good example pf}, $\langle\{ \mathbb{Z} \xrightarrow{\times a} \mathbb{Z}, \mathbb{Z}\xrightarrow{\times b} \mathbb{Z} \}\rangle$ is a universal colim sieve if and only if $a$ and $b$ are relatively prime. 
\end{ex}

\begin{ex}\label{sieve ex}
The sieve $S = \langle \{\mathbb{Z}\xrightarrow{\times 1} \mathbb{Z}/4\mathbb{Z}, \mathbb{Z}/2\mathbb{Z}\xrightarrow{\times 2}\mathbb{Z}/4\mathbb{Z}\}\rangle$ is a universal colim sieve on $\mathbb{Z}/4\mathbb{Z}$ by Corollary \ref{sieve with surj is ucs}. 
Additionally, $S$ is not monogenic, i.e. it cannot be written as a sieve generated by one morphism.

\end{ex}


\begin{ex}
Let $S = \langle \{g\colon \mathbb{Z}^n \hookrightarrow \mathbb{Z}^n\} \cup \{f_i\colon \mathbb{Z}^{m_i}\hookrightarrow \mathbb{Z}^n\ |\ m_i<n \}_{i=1}^N \rangle$ be a sieve on $\mathbb{Z}^n$. Then $S$ is a universal colim sieve if and only if $g$ is a surjection, i.e. $g$ is an isomorphism. (This is a direct corollary of Proposition \ref{hope?} and Corollary \ref{sieve with surj is ucs}.)
\end{ex}



Ideally, we would like to know a `nice' basis for the canonical topology on \textbf{Ab}, like the bases in Section \ref{section basis for Top and Sets}; to start moving towards this ideal, we look
at the simplest free group, $\mathbb{Z}$. In Example \ref{rel prime ex} we see that a relative prime pair of numbers will generate a universal colim sieve; this is actually true in general, specifically:

\begin{prop}\label{sieves on Z}
Let $S = \langle \{ \mathbb{Z}\xrightarrow{\times a_i} \mathbb{Z} \}_{i=1}^N \rangle$ be a sieve on $\mathbb{Z}$. Then $S$ is a universal colim sieve if and only if $\text{gcd}(a_1,\dots,a_N) = 1$.
\end{prop}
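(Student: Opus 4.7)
The plan is to handle the two directions separately, the forward being routine and the backward requiring induction via transitivity of the Grothendieck topology.

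For the forward direction, if $S$ is a colim sieve then $\colim_S U \to \mathbb{Z}$ is an isomorphism, and Proposition \ref{colim is coeq} presents $\colim_S U$ as a quotient of $\bigoplus_{i=1}^N \mathbb{Z}$; thus the composite $\bigoplus_{i=1}^N \mathbb{Z} \twoheadrightarrow \colim_S U \xrightarrow{\sim} \mathbb{Z}$, which is $(n_i) \mapsto \sum a_i n_i$ with image $\text{gcd}(a_1,\dots,a_N)\mathbb{Z}$, is surjective, forcing $\text{gcd}(a_1, \dots, a_N) = 1$.

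For the converse, assuming $\text{gcd}(a_1,\dots,a_N)=1$ (and normalizing $a_i>0$, which is harmless since $\langle\times a\rangle=\langle\times(-a)\rangle$ and zero morphisms are absorbed into any nonempty sieve), I would proceed by strong induction on $N$. The base case $N=1$ is immediate since $a_1=1$ makes $\times a_1$ the identity and $S$ the maximal sieve. For the inductive step: if $\text{gcd}(a_1,\dots,a_{N-1})=1$ already, the inductive hypothesis gives a universal colim sub-sieve of $S$, which forces $S$ itself to be one, since universal colim sieves form a Grothendieck topology (Theorem \ref{ucs is a top}) and are therefore closed under enlargement. Otherwise, set $d=\text{gcd}(a_1,\dots,a_{N-1})>1$; then $\text{gcd}(d, a_N)=1$, so Proposition \ref{good example pf} produces a universal colim sieve $T = \langle \{\times d, \times a_N\}\rangle$, and by the transitivity axiom of the canonical topology it now suffices to verify that $t^*S$ is a universal colim sieve for each generator $t$ of $T$.

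The two pullback computations go as follows: using Lemma \ref{pb sieve gen set} together with the fact that $\mathbb{Z}\xrightarrow{\times a}\mathbb{Z}\xleftarrow{\times b}\mathbb{Z}$ pulls back to $\mathbb{Z}$ with second projection $\times(a/\text{gcd}(a,b))$, one finds $(\times d)^*S = \langle\{\times(a_i/d)\}_{i=1}^{N-1}\cup\{\times a_N\}\rangle$, whose first $N-1$ generators have gcd equal to $1$ (because $d$ divides each $a_i$ for $i<N$), so the inductive hypothesis finishes this case; meanwhile $(\times a_N)^*S$ contains $\times 1 = \mathrm{id}$ from its $i=N$ generator (since $\text{gcd}(a_N, a_N)=a_N$), hence is the maximal sieve. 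The main obstacle is really the injectivity half of showing $\colim_S U \to \mathbb{Z}$ is an isomorphism---Lemma \ref{inj map for 2 gen sieves} only handles at most two generators and Lemma \ref{universality of surjections} yields only surjectivity---so a direct $N$-fold colimit computation is unwieldy, and the inductive/transitivity strategy above sidesteps the computation entirely by bootstrapping off the clean two-generator case of Proposition \ref{good example pf}.
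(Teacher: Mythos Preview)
Your proof is correct and takes a genuinely different route from the paper's. For the backward direction, the paper proceeds in two stages: first it uses the reductions (Propositions~\ref{reduction 1}--\ref{reduction 3}) to argue that every pullback $f^\ast S$ along an injection $\mathbb{Z}\hookrightarrow\mathbb{Z}$ again has the same shape with $\gcd$ equal to $1$, so universality follows once $S$ itself is shown to be a colim sieve; second, it establishes that $S$ is a colim sieve by an explicit computation, identifying $\colim_S U$ with the cokernel of the third map in the Taylor resolution of $\mathbb{Z}$ and reading off the isomorphism from exactness. Your argument instead inducts on $N$ and exploits the Grothendieck-topology structure directly: you bootstrap from the two-generator case of Proposition~\ref{good example pf} via the transitivity axiom, never computing the $N$-fold colimit at all. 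What your approach buys is that it avoids the Taylor resolution entirely and stays within the internal logic of the topology (enlargement and transitivity); what the paper's approach buys is an explicit identification of $\colim_S U$ and a template that, in principle, could be adapted wherever a suitable free resolution is available. Two small cosmetic points: the induction you describe is ordinary (you only invoke the hypothesis at $N-1$), and when you say ``the inductive hypothesis finishes'' the analysis of $(\times d)^\ast S$, you are implicitly reusing your Case~1 enlargement step on this $N$-generator sieve, which is perfectly valid but worth saying explicitly.
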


\begin{proof}
First assume that $S$ is a universal colim sieve. In particular, the map $\colim_{S}{U}\to\mathbb{Z}$ is a surjection, i.e. $\mathbb{Z}^N\to\mathbb{Z}$, $(x_1,\dots,x_N) \mapsto a_1x_1 + \dots + a_Nx_N$ is a surjection. Therefore, $(a_1,\dots,a_N) = \mathbb{Z}$ and this proves the forward direction.

Now assume that $\text{gcd}(a_1,\dots,a_N) = 1$.
We will break the proof that $S$ is a universal colim sieve up into several pieces.
First we will reduce the proof to showing that $S$ is a colim sieve. By the reductions (Propositions \ref{reduction 1}, \ref{reduction 2} and \ref{reduction 3}), universality only needs to be checked along maps of the form $f\colon \mathbb{Z}\xrightarrow{\times k}\mathbb{Z}$ where $k\neq 0$. Fix $k\neq 0$, i.e. fix $f$,
and write $\mathbb{Z}_b$ for the domain of $\mathbb{Z}\xrightarrow{\times b}\mathbb{Z}$.
By Lemma \ref{pb sieve gen set}, $f^\ast S = \langle \{ \pi_i\colon \mathbb{Z}_{a_i}\times_\mathbb{Z} \mathbb{Z}_k \to \mathbb{Z}_k \}_{i=1}^N \rangle$.
Moreover, it is easy to see that the pullback $\mathbb{Z}_{a_i}\times_\mathbb{Z} \mathbb{Z}_k\cong \mathbb{Z}$ and $\pi_i$ must be multiplication by $\frac{a_i}{\text{gcd}(a_i,k)}$.
Since $\text{gcd}(a_1,\dots,a_N)$ equals $1$, then $\text{gcd}\left(\frac{a_1}{\text{gcd}(a_1,k)},\dots,\frac{a_N}{\text{gcd}(a_N,k)}\right) = 1$ and hence $f^\ast S$ has the same form as $S$.
Specifically, any argument showing that $S$ is a colim sieve will similarly show that $f^\ast S$ is a colim sieve. Therefore, it suffices to show that $S$ is a colim sieve.

To see that $S$ is a colim sieve, i.e. to see that the map $\colim_{S}{U}\to \mathbb{Z}$ induced by $a_1,\dots,a_N$ is an isomorphism, let $\alpha = \frac{N(N-1)}{2}$ 
and notice that
\begin{equation*}
\begin{split}
\colim_{S}{U}
& \cong \Coeq\left( \begin{tikzcd}
\oplus_{i=1}^\alpha \mathbb{Z}
\arrow[d, shift left = 2] \arrow[d, shift right = 2] \\
\oplus_{i=1}^{N} \mathbb{Z}
\end{tikzcd}\right) \\
& \cong \text{Cokernel}\left(\phi\colon \mathbb{Z}^\alpha \to \mathbb{Z}^N\right)
\end{split}
\end{equation*}
%
for some map $\phi$ where the first isomorphism comes from Lemma \ref{colim is coeq} and the last isomorphism comes from the fact that we are working in an abelian category.
Now this map $\phi$ happens to be the third map in the Taylor resolution of $\mathbb{Z}$, i.e. $\phi_1$ in \cite{taylorres}.
We make two remarks about this previous sentence: (1) we will not prove that our $\phi$ is \cite{taylorres}'s $\phi_1$, although this is easy to observe, and (2) the Taylor resolution in \cite{taylorres} is specifically for polynomial rings, not $\mathbb{Z}$, however, both the definition of the Taylor resolution and the proof that it is in fact a free resolution are analogous.
Here is the end of the Taylor resolution:
$$\dots \to \mathbb{Z}^\alpha \xrightarrow{\phi} \mathbb{Z}^N \xrightarrow{(a_1\ \dots\ a_N)} \mathbb{Z}\to \mathbb{Z}/(a_1,\dots,a_N)\mathbb{Z} \to 0$$
%
Since $\text{gcd}(a_1,\dots,a_N) = 1$, then it follows that $(a_1\ \dots\ a_N)$ is a surjection and $\mathbb{Z}/(a_1,\dots,a_N)\mathbb{Z} \cong 0$. Thus we obtain $0\to Im(\phi) \to \mathbb{Z}^N \to \mathbb{Z}\to 0$, which is an exact sequence and hence implies that the cokernel of $\phi$ is $\mathbb{Z}$. Additionally, since $(a_1\ \dots\ a_N)$ induced our map $\colim_{S}{U}\to \mathbb{Z}$, then this short exact sequence also says that $S$ is a colim sieve.
\end{proof}

Because of Proposition \ref{sieves on Z}, we can now easily determine when a sieve on $\mathbb{Z}$ is in the canonical topology and we can easily come up with examples; for example,
$\langle \{ \mathbb{Z}\xrightarrow{\times 15}\mathbb{Z}, \mathbb{Z} \xrightarrow{\times 10} \mathbb{Z}, \mathbb{Z} \xrightarrow{\times 12} \mathbb{Z} \} \rangle$ is in the canonical topology whereas the sieve
$\langle \{ \mathbb{Z} \xrightarrow{\times 15} \mathbb{Z}, \mathbb{Z} \xrightarrow{\times 50}\mathbb{Z}, \mathbb{Z} \xrightarrow{\times 20} \mathbb{Z} \}\rangle$ is not.
One may hope for a similar outcome for sieves on $\mathbb{Z}^n$ when $n\geq 2$, however, the Taylor resolution used in the proof of Proposition \ref{sieves on Z}
does not seem to generalize in a suitable manner.
Instead, we have a proposition that may tell us when a potential sieve is not in the canonical topology.

\begin{prop}\label{diag matrices}
Let $S = \langle \{ \mathbb{Z}^n \xrightarrow{A_i} \mathbb{Z}^n \}_{i=1}^N \rangle$ where $A_i$ is a diagonal matrix with $\det(A_i)\neq 0$. Then there exists a map $\beta\colon \mathbb{Z}\to \mathbb{Z}^n$ such that $\beta^\ast S$ is not a colim sieve if and only if $\text{gcd}(\det(A_1),\dots,\det(A_N)) \neq 1$.
\end{prop}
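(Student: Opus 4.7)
The plan is to reduce $\beta^\ast S$ to a sieve on $\mathbb{Z}$ of the form to which Proposition \ref{sieves on Z} applies, and then read the gcd criterion off of the determinants.

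First I would fix $\beta\colon\mathbb{Z}\to\mathbb{Z}^n$ with $\beta(1) = (b_1,\dots,b_n)$, write $A_i = \text{diag}(a_{i,1},\dots,a_{i,n})$, and apply Lemma \ref{pb sieve gen set} to obtain $\beta^\ast S = \langle \{\pi_i\colon P_i \to \mathbb{Z}\}_{i=1}^N\rangle$ where $P_i = \{(v,k)\in\mathbb{Z}^n\oplus\mathbb{Z} : a_{i,j}v_j = kb_j \text{ for all } j\}$. A coordinate-by-coordinate check shows $k\in\mathrm{Im}(\pi_i)$ if and only if $c_i\mid k$, where
$$c_i \;=\; \mathrm{lcm}_{\,j\,:\,b_j\neq 0}\!\left(\frac{a_{i,j}}{\gcd(a_{i,j},b_j)}\right)$$
(taking $c_i = 1$ if all $b_j = 0$). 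Since $c_i\in\mathrm{Im}(\pi_i)$, the maps $\pi_i$ and $\mathbb{Z}\xrightarrow{\times c_i}\mathbb{Z}$ factor through each other, so they generate the same monogenic sieve. Hence $\beta^\ast S = \langle\{\mathbb{Z}\xrightarrow{\times c_i}\mathbb{Z}\}_{i=1}^N\rangle$, and Proposition \ref{sieves on Z} tells us $\beta^\ast S$ is a colim sieve if and only if $\gcd(c_1,\dots,c_N) = 1$. The problem thus reduces to controlling the integers $c_i$.

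For the forward direction I would observe the divisibility $c_i\mid\det(A_i)$: each $a_{i,j}/\gcd(a_{i,j},b_j)$ divides $a_{i,j}$, so their lcm divides $\prod_j a_{i,j} = \det(A_i)$. Hence any prime dividing $\gcd(c_1,\dots,c_N)$ also divides every $\det(A_i)$, forcing $\gcd(\det(A_1),\dots,\det(A_N))\neq 1$. For the converse, given a prime $p\mid\gcd(\det(A_1),\dots,\det(A_N))$, I would take $\beta(1) = (1,1,\dots,1)$, so that $c_i = \mathrm{lcm}_j(a_{i,j})$. Since $p\mid\det(A_i) = \prod_j a_{i,j}$, some $a_{i,j_i}$ is divisible by $p$, and hence so is $c_i$; this produces $p\mid\gcd(c_1,\dots,c_N)$, so $\beta^\ast S$ fails to be a colim sieve.

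The main technical obstacle will be the sieve-equality step in the middle paragraph: the pullback modules $P_i$ need not be free of rank one, so I have to verify carefully that $\pi_i$ and $\times c_i$ generate the same monogenic sieve via two explicit factorizations, the key point being that $c_i$ itself lies in $\mathrm{Im}(\pi_i)$ by its very definition. Once that identification is in hand, everything else is bookkeeping with gcds, lcms, and the already-established Proposition \ref{sieves on Z}.
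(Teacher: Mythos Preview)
Your argument is correct and follows essentially the same route as the paper: compute the pullback sieve as $\langle\{\mathbb{Z}\xrightarrow{\times c_i}\mathbb{Z}\}\rangle$, use the diagonal embedding $\beta(1)=(1,\dots,1)$ for one direction and the divisibility $c_i\mid\det(A_i)$ for the other, then invoke Proposition~\ref{sieves on Z}. Two small remarks: your worry that $P_i$ might not be free of rank one is unfounded, since $\det(A_i)\neq 0$ makes $A_i$ injective and hence $\pi_i\colon P_i\to\mathbb{Z}$ injective, so $P_i\cong\mathbb{Z}$ and the paper simply writes down the pullback square directly; and when you appeal to Proposition~\ref{sieves on Z} for the ``not a colim sieve'' conclusion, note (as the paper does) that it is the \emph{proof} of that proposition, not its stated ``universal colim sieve'' conclusion, that gives the needed implication $\gcd\neq 1\Rightarrow$ not a colim sieve.
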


\begin{proof}
First we set up some notation:
Let $A_i = diag(a_{1i},\dots,a_{ni})$ and $\mathbb{Z}^n_i$ be the domain of $A_i$.

To prove the backward direction, suppose that $\text{gcd}(\det(A_1),\dots,\det(A_N))$ does not equal $1$. We can rephrase the assumptions as $a_{ik}\neq 0$ for all $k$ and there exists a prime $q$ such that $q$ divides the product $a_{1i}\dots a_{ni}$ for all $i$. Set $\beta$ equal to the diagonal embedding, i.e. $1\mapsto (1,\dots,1)$.
Then by Lemma \ref{pb sieve gen set}, $\beta^\ast S = \langle \{ f_i\colon \mathbb{Z}^n_i \times_{\mathbb{Z}^n} \mathbb{Z} \to \mathbb{Z}\}_{i=1}^N \rangle$.
Let $k_i = \text{lcm}(a_{1i},\dots, a_{ni})$ and $\chi_i\colon \mathbb{Z}\to \mathbb{Z}^n$, $1\mapsto \left( \frac{k_i}{a_{1i}}, \dots, \frac{k_i}{a_{ni}}  \right)$, then
\begin{center}
\begin{tikzcd}
\mathbb{Z} \arrow{d}[left]{k_i} \arrow{r}[above]{\chi_i} &
\mathbb{Z}^n \arrow{d}[right]{A_i} \\
\mathbb{Z} \arrow{r}[below]{\beta} & \mathbb{Z}^n
\end{tikzcd}
\end{center}
%
is a pullback diagram.
Moreover, the prime $q$ divides $k_i$ for all $i$ since it divides $a_{1i}\dots a_{ni}$ for all $i$.
Thus $\text{gcd}(k_1,\dots,k_N)\neq 1$.
Now by Proposition \ref{sieves on Z}, we can see that $\beta^\ast S = \langle \{\mathbb{Z} \xrightarrow{\times k_i} \mathbb{Z}\}_{i=1}^N \rangle$ is not a universal colim sieve. 
In particular, the first part of the proof of Proposition \ref{sieves on Z} shows that $\beta^\ast S$ is not a colim sieve.

To prove the forward direction, we will prove the contrapositive statement.
So suppose that $\text{gcd}(\det(A_1),\dots,\det(A_N)) = 1$.
Let $\beta\colon \mathbb{Z}\to \mathbb{Z}^n$ be given as the matrix $\begin{pmatrix} b_1 \\ \vdots \\ b_n \end{pmatrix}$.
To see that $\beta^\ast S = \langle \{ f_i\colon \mathbb{Z}^n_i \times_{\mathbb{Z}^n} \mathbb{Z} \to \mathbb{Z}\}_{i=1}^N \rangle$ is a colim sieve,
notice that we have the pullback diagram
\begin{center}
\begin{tikzcd}
\mathbb{Z} \arrow{d}[left]{k_i} \arrow{r} &
\mathbb{Z}^n \arrow{d}[right]{A_i} \\
\mathbb{Z} \arrow{r}[below]{\beta} & \mathbb{Z}^n
\end{tikzcd}
\end{center}
%
where $k_i = \text{lcm} \left(\frac{a_{1i}}{\text{gcd}(a_{1i},b_1)},\dots,\frac{a_{ni}}{\text{gcd}(a_{ni},b_n)}\right)$.
Hence, $k_i$ divides $\det(A_i)$. 
This implies that $\text{gcd}(k_1,\dots,k_n)$ divides $\text{gcd}(\det(A_1),\dots,\det(A_N))$ and hence equals 1.
Now by Proposition \ref{sieves on Z}, we can see that $\beta^\ast S = \langle \{\mathbb{Z} \xrightarrow{\times k_i} \mathbb{Z}\}_{i=1}^N \rangle$ is a universal colim sieve. 
\end{proof}

\begin{ex}
Based on Proposition \ref{diag matrices} we can automatically say that the sieve $\displaystyle \left\langle \left\{
\begin{pmatrix}
4 & 0 \\
0 & 14
\end{pmatrix},
\begin{pmatrix}
21 & 0 \\
0 & 2
\end{pmatrix},
\begin{pmatrix}
1 & 0 \\
0 & 49
\end{pmatrix}
\right\} \right\rangle$ on $\mathbb{Z}^2$ is not in the canonical topology because each matrix has a multiple of 7 somewhere on its diagonal.
\end{ex}

Suppose, like in Proposition \ref{diag matrices}, $S = \langle \{ \mathbb{Z}^n \xrightarrow{A_i} \mathbb{Z}^n \}_{i=1}^N \rangle$ where each $A_i$ is a diagonal matrix and $\text{gcd}(\det(A_1),\dots,\det(A_N)) = 1$.
In order to determine if $S$ is a universal colim sieve, we (only)
need to check if $f^\ast S$ is a colim sieve for all $f\colon \mathbb{Z}^m \hookrightarrow \mathbb{Z}^n$, $2\leq m\leq n$. 
However, this is still a fair amount of work 
and it would be nice if this process could be simplified further.

Now we finish this section with a few more examples.
Note: we will not prove any assertions in these examples, however, they are all basic computations that can be checked using undergraduate linear algebra.

\begin{ex}
The sieve $S_1 = \displaystyle \left\langle \left\{
\begin{pmatrix}
7 & 0 \\
1 & 4
\end{pmatrix},
\begin{pmatrix}
21 & 0 \\
1 & 18
\end{pmatrix},
\begin{pmatrix}
24 & 0 \\
6 & 5
\end{pmatrix}
\right\} \right\rangle$ on $\mathbb{Z}^2$ is not in the canonical topology although it is a colim sieve. In particular, $S_1$ is not universal because $f^\ast S_1$ is not a colim sieve for $f\colon \mathbb{Z} \to \mathbb{Z}^2$, $f(1) = (1,0)$.
\end{ex}

If we take the generating set of $S_1$ and change the 1 in the first matrix to a 0, then we get the following example:

\begin{ex}
The sieve $S_2 = \displaystyle \left\langle \left\{
\begin{pmatrix}
7 & 0 \\
0 & 4
\end{pmatrix},
\begin{pmatrix}
21 & 0 \\
1 & 18
\end{pmatrix},
\begin{pmatrix}
24 & 0 \\
6 & 5
\end{pmatrix}
\right\} \right\rangle$ on $\mathbb{Z}^2$ is not a colim sieve since $\colim_{S}{U}\cong \mathbb{Z}^2\oplus \mathbb{Z}/2\mathbb{Z}$. Therefore, $S_2$ is also not in the canonical topology.
\end{ex}

Finally, if take the generating set of $S_2$ and change the 18 in the second matrix to a 9, then we get:

\begin{ex}
The sieve $S_3 = \displaystyle \left\langle \left\{
\begin{pmatrix}
7 & 0 \\
0 & 4
\end{pmatrix},
\begin{pmatrix}
21 & 0 \\
1 & 9
\end{pmatrix},
\begin{pmatrix}
24 & 0 \\
6 & 5
\end{pmatrix}
\right\} \right\rangle$ on $\mathbb{Z}^2$ is a colim sieve, however, whether or not this sieve is in the canonical topology is unknown.
\end{ex}



\bibliographystyle{plain}
\bibliography{ExamplesCanonicalTopology}
\end{document}